\newtheorem{thm}{Theorem}[section]
\newtheorem{cor}[thm]{Corollary}
\newtheorem{proposition}[thm]{Proposition}
\newtheorem{lem}[thm]{Lemma}
\theoremstyle{definition}
\newtheorem{defn}[thm]{Definition}
\newtheorem{notn}[thm]{Notation}
\theoremstyle{remark}
\newtheorem{remark}[thm]{Remark}
\newtheorem{example}[thm]{Example}
\let\c@equation\c@thm
\numberwithin{equation}{section}
\DeclareMathOperator{\Ima}{Im}
\DeclareMathOperator{\Gr}{gr}
\DeclareMathOperator{\sgn}{sgn}
\DeclareMathOperator{\coker}{coker}
\newcommand{\twisty}{\mathbin{\widetilde{\otimes}}}
\newcommand{\squarebin}{\mathbin{\square}}
\newcommand{\FDGM}{\textrm{FDGM}}
\newcommand{\SSEQ}{\textrm{SSEQ}}
\title{Comultiplication in the Serre Spectral Sequence}
\author{David Chan}
\begin{document}
	\maketitle
	\begin{abstract}
		We show the homological Serre spectral sequence with coefficients in a field is a spectral sequence of coalgebras.  We also identify the comultiplication on the $E^2$ page of the spectral sequence as being induced by the usual comultiplication in homology.  At the end, we provide some example computations highlighting the use the co-Leibniz rule.
	\end{abstract}
	\section{Introduction}	 
		An overarching theme in algebraic topology is that more algebraic structure is always better.  A basic example appears in an introductory course on algebraic topology.  Let $X=S^2\vee S^4$ be the wedge of the $2$ and $4$-spheres and let $Y=\mathbb{C}P^2$ be the complex projective plane.  An easy calculation using CW structures tells us the integral homology groups of $X$ and $Y$ are isomorphic: both have copies of $\mathbb{Z}$ in dimensions $0$, $2$ and $4$ and are $0$ in other dimensions.  A little more work shows that the cohomology \emph{rings} of $X$ and $Y$ are not isomorphic, as the cohomology of $Y$ admits non-trivial products in dimension $2$.  The upshot is $X$ and $Y$ cannot be homotopy equivalent, despite having isomorphic homology groups.
		
		As this example suggests, cohomology lends itself to certain arguments that are missed by considering only the homology of spaces.  Indeed, in general there is no corresponding structure on the homology of spaces that is equivalent to the ring structure in cohomology.  However, when coefficients for homology are taken to be a field $k$, the homology of a space admits the structure of a $k$-coalgebra.  While this structure is completely dual to the $k$-algebra structure on cohomology, it is often passed over in favor of the more familiar algebra.  The purpose of this paper is to investigate a place in the theory where the coalgebra structure on homology of spaces has been largely ignored. 
		
		The cohomological Serre spectral sequence associated to a fibration is an important tool for both computation and theory in algebraic topology.  Among other things, it was used for groundbreaking computations in the homotopy groups of spheres as well as the homology of Eilenberg--MacLane spaces.  One of the primary features of this spectral sequence that makes computation achievable is that this a spectral sequence of algebras.  The multiplications in the spectral sequence, combined with the Leibniz rule, give an important avenue of attack for resolving differentials.  Those interested in applications of this, and other spectral sequences, are recommended to \cite{Mc}.
	
		Dual to the cohomological Serre spectral sequence there is also a spectral sequence for relating the homology groups of spaces in a fibration.  However, just as the homology of a space is not a ring, the homological Serre spectral sequence is not a spectral sequence of algebras.  As such, the problem of resolving differentials in this spectral sequence presents hurdles not found in the cohomological spectral sequence.  More precisely, the homological spectral sequence faces the same challenges but without the same toolkit for solving them.  In many cases, this is not too much of an issue; various duality arguments often allow one to get away with simply computing cohomology. Nevertheless, one often wants to compute the homology of spaces directly and so it is desirable to have more algebraic structure to exploit in the homological Serre spectral sequence.
		
		A reasonable question to ask is whether the homological Serre spectral sequence with coefficients in a field should be a spectral sequence of coalgebras.    A result of Quillen \cite[Section 6]{Qui:Rat} shows this is often true in the rational case.
		
		\begin{thm}[Quillen]
			Let $F\to E\xrightarrow{p} B$ be a Serre fibration with $B$ simply connected and $p$ inducing a surjection $p_*\colon\pi_2(E)\to \pi_2(B)$.  Then the homological Serre spectral sequence with coefficients in $\mathbb{Q}$ is a spectral sequence of $\mathbb{Q}$-coalgebras.  
		\end{thm}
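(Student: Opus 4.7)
The plan is to use Quillen's rational homotopy theory to model the fibration $F \to E \to B$ by a filtered cocommutative DG coalgebra over $\mathbb{Q}$; any such filtered object produces a spectral sequence of coalgebras, which I would then identify with the rational Serre spectral sequence.

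First, I would verify that the hypotheses place us within the range of applicability of Quillen's equivalence between rational homotopy types and (simply connected) cocommutative DG coalgebras over $\mathbb{Q}$. The long exact sequence in homotopy groups, together with $\pi_1(B)=0$ and the surjectivity of $p_*\colon\pi_2(E)\to\pi_2(B)$, forces the connecting map $\pi_2(B)\to\pi_1(F)$ to vanish and gives $\pi_1(F)\cong\pi_1(E)$, which is the nilpotency-type input needed so that the fibration admits a cocommutative DG coalgebra model of the form $\mathcal{C}(B) \twisty \mathcal{C}(F)$, where $\mathcal{C}(B)$ and $\mathcal{C}(F)$ are Quillen-type coalgebra models of base and fiber and the twist encodes the action of the base on the fiber at the chain level.

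Next, I would equip this model with the increasing filtration by base degree, $F_p = \bigoplus_{i\le p} \mathcal{C}(B)_i \otimes \mathcal{C}(F)$. The coproduct on a tensor product of cocommutative coalgebras is, up to Koszul signs, the tensor product of the coproducts, so this filtration is stable under comultiplication and we obtain a genuine filtered DG coalgebra. Its associated spectral sequence is therefore a spectral sequence of coalgebras, and it converges to $H_*(E;\mathbb{Q})$ as a coalgebra. To match it with the Serre spectral sequence, I would compare $E^2$-pages: both compute $H_*(B; H_*(F;\mathbb{Q}))$, and a standard comparison argument for first-quadrant spectral sequences propagates the isomorphism to all higher pages.

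The main obstacle is producing the twisted tensor product model functorially from the fibration. The Alexander--Whitney diagonal on singular chains is not cocommutative, so one cannot naively take the Serre filtration on $C_*(E;\mathbb{Q})$ and hope that the coproduct descends to the associated graded. Quillen circumvents this by passing, via his equivalence, to strictly cocommutative DG coalgebra models and rectifying the given fibration to a twisted tensor product at the level of such models; this rectification, together with the verification that it realizes the Serre filtration rationally, is the substantive content of \cite[Section~6]{Qui:Rat}. Once it is in hand, the spectral sequence statement is essentially formal.
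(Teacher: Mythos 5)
This statement is quoted background: the paper does not prove it, but cites it as Quillen's theorem, established in \cite[Section 6]{Qui:Rat} as a corollary of his equivalence between rational homotopy theory and 2-reduced DG coalgebras over $\mathbb{Q}$. Your outline is a reconstruction of that route (cocommutative coalgebra model of the fibration of twisted-tensor-product type, filtration by base degree, comparison with the Serre spectral sequence), and as such it is reasonable in spirit, but it is incomplete in exactly the places where Quillen's argument is hard: the existence of a cocommutative model of the required form, the verification that the filtration on that model realizes the Serre filtration so that the comparison is induced by an actual morphism of filtered objects (an abstract isomorphism of $E^2$-terms does not by itself propagate compatibly with the coalgebra structure), and the precise role of the hypothesis that $p_*\colon\pi_2(E)\to\pi_2(B)$ is surjective, which is what allows the fibration to be modelled in the 2-reduced coalgebra category rather than a generic ``nilpotency-type'' condition. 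Since you defer all of these to \cite{Qui:Rat}, the proposal amounts to citing the theorem rather than proving it.

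More importantly, the ``main obstacle'' you identify is not an obstacle, and recognizing this is the central point of the paper. Cocommutativity is irrelevant to obtaining a spectral sequence of coalgebras: one only needs a coassociative, counital coproduct that respects the filtration. Lemma \ref{coproduct_respects_filtration} shows that the Alexander--Whitney coproduct on singular chains \emph{does} preserve the Serre filtration, and Theorem \ref{fdgcs gives spectral sequence of coalgebras} shows that any filtered differential graded coalgebra over a field yields a spectral sequence of coalgebras converging as a coalgebra. In other words, the ``naive'' approach you dismiss is precisely how the paper proves the stronger main theorem, over an arbitrary field and with no $\pi_2$ hypothesis. Your Quillen-style route, even if completed, is confined to $\mathbb{Q}$ because it passes through strict cocommutative models, which is exactly why the paper abandons it.
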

		
		The main result of this paper is the following generalization of Quillen's result.
		\begin{thm}
			Let $k$ be any field and $F\to E\to B$ be a Serre fibration with $B$ simply connected.  Then the homological Serre spectral sequence with coefficients in $k$ is a spectral sequence of $k$-coalgebras. The comultiplication on the $E^2$ page can be identified with the usual comultiplication of the tensor product of coalgebras $H_{*}(B;k)\otimes H_*(F;k)$.
		\end{thm}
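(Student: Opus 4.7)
The plan is to dualize the classical proof that the cohomological Serre spectral sequence is a spectral sequence of algebras (as laid out in \cite{Mc}). The technical goal is to construct a morphism of filtered chain complexes
\[
\Delta \colon C_*(E;k) \longrightarrow C_*(E;k) \otimes_k C_*(E;k)
\]
representing the diagonal, where the source carries the skeletal filtration $F_pC_*(E;k) = C_*(p^{-1}(B^{(p)});k)$ (with $p\colon E\to B$ the projection) and the target carries the total tensor filtration $F_n = \sum_{i+j=n} F_iC_*(E;k) \otimes_k F_jC_*(E;k)$. Once such a filtered chain-level diagonal is in hand, applying the spectral sequence functor produces comultiplications on every page, the co-Leibniz rule follows formally, and convergence ensures the induced comultiplication on the abutment agrees with the usual coalgebra structure on $H_*(E;k)$.

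To construct $\Delta$, I would first replace $B$ by a CW approximation and invoke cellular approximation for the diagonal $B\to B\times B$: this produces a cellular map $\Delta_B$, homotopic to the usual diagonal, satisfying $\Delta_B(B^{(p)}) \subset (B\times B)^{(p)} = \bigcup_{i+j=p} B^{(i)} \times B^{(j)}$. Combining $\Delta_B$ with the fibration structure (either by an acyclic-models argument applied to the category of Serre fibrations, or by passing to Brown's twisted tensor product model of the total space) would yield a chain-level diagonal on $E$ that is filtered with respect to the skeletal filtration rather than merely up to a factor of two. Composing with the Alexander--Whitney map then gives the required $\Delta$.

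To identify the $E^2$ comultiplication, observe that $\Gr_p C_*(E;k) \cong C_*(p^{-1}(B^{(p)}), p^{-1}(B^{(p-1)});k)$; under the simple connectivity of $B$ the monodromy on fiber homology is trivial, so $E^1_{p,*} \cong C^{\mathrm{cell}}_p(B;k) \otimes_k H_*(F;k)$ and, by the Künneth theorem over a field,
\[
E^2_{p,q} \cong H_p(B;k) \otimes_k H_q(F;k).
\]
Chasing $\Delta$ through these identifications, the induced comultiplication on $E^2$ factors as the tensor product of the Alexander--Whitney diagonals on $H_*(B;k)$ and on $H_*(F;k)$, which is precisely the comultiplication on the tensor product coalgebra $H_*(B;k) \otimes_k H_*(F;k)$.

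The main obstacle is the construction of the filtered chain-level diagonal. The naive set-theoretic diagonal $E\to E\times E$ doubles filtration degree: a simplex in $p^{-1}(B^{(p)})$ lands in $p^{-1}(B^{(p)}) \times p^{-1}(B^{(p)})$, which sits in filtration $2p$ of the product. It is the cellular approximation of $\Delta_B$, combined with a careful chain-level lift along $p$, that reduces this back to filtration $p$, and executing this reduction rigorously at the chain level (rather than only on homology) is the technical heart of the argument. A secondary difficulty is verifying that the Künneth and Eilenberg--Zilber identifications on $E^2$ genuinely intertwine the induced comultiplication with the tensor coalgebra structure, which demands careful naturality bookkeeping.
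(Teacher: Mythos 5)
Your plan correctly diagnoses the central obstruction — with the skeletal filtration $F_pC_*(E)=C_*(p^{-1}(B^{(p)}))$ the Alexander--Whitney diagonal only lands in filtration $2p$ — but it never actually overcomes it, and this is a genuine gap rather than a deferred routine verification. A cellular approximation $\Delta_B$ of the diagonal of $B$ is merely \emph{homotopic} to the diagonal, so there is no map of total spaces over $\Delta_B$ handed to you; you must deform the diagonal of $E$ through the homotopy lifting property, and then (i) check the resulting chain map is strictly filtration-preserving, (ii) check it is independent of the choices up to \emph{filtered} chain homotopy (otherwise the comultiplications on the pages $E^r$, $r\geq 1$, are not well defined), (iii) check enough coassociativity survives, and (iv) compare it back to the honest diagonal of $E$ to know the abutment coproduct is the standard one on $H_*(E;k)$. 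Neither of your two proposed devices settles this: acyclic models produces maps and homotopies natural in the fibration but gives no filtration control, and Brown's twisted tensor product $C_*(B)\otimes_\tau C_*(F)$ is notoriously \emph{not} naturally a differential graded coalgebra compatible with the twisting, so invoking it re-poses the problem rather than solving it. Finally, your last step is understated: with a diagonal produced abstractly by approximation/acyclic models there is no explicit formula to ``chase'' through the identification $E^2\cong H_*(B;k)\otimes H_*(F;k)$, so identifying the induced $E^2$ coproduct with the tensor coalgebra structure is not naturality bookkeeping — it is the main content of the theorem.

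The paper sidesteps your main obstacle entirely by filtering $C_*(E;k)$ not by skeleta but by Serre's degeneracy filtration ($\alpha$ lies in $F^p$ when $\pi\circ\alpha$ factors through a $p$-simplex). For that filtration the unmodified Alexander--Whitney diagonal is already filtration-preserving on the nose (Lemma \ref{coproduct_respects_filtration}): if $\pi\circ\alpha=\alpha'\circ s$ with $s$ a codegeneracy onto $\underline{p}$, the front $i$-face and back face have \emph{complementary} degeneracies $s(i)$ and $p-s(i)$, so $\triangle(\alpha)\in G^p$ — exactly the complementarity that fails for skeleta. With that, the general fact that a filtered dg coalgebra over a field yields a spectral sequence of coalgebras (Theorem \ref{fdgcs gives spectral sequence of coalgebras}, via $\mathbf{E}(C\otimes C)\cong\mathbf{E}(C)\otimes\mathbf{E}(C)$ and Künneth) gives the first assertion with no approximation or lifting at all. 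The real work is then the $E^2$ identification, done with McCleary's explicit chain-level maps $\phi$ and $\psi$ (the latter built from the Eilenberg--Zilber map and fiberwise path lifts) and a combinatorial analysis showing only the ``simple'' $(q,p)$ shuffles contribute, which produces the coproduct $\triangle_B\twisty\triangle_F$ of Proposition \ref{new comult}. If you want to salvage your outline, either switch to the degeneracy filtration, or be prepared to carry out the filtered lifting construction and the $E^2$ comparison in full detail — both are substantial, and the second cannot be done without an explicit model of your diagonal.
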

	
		Our proof is substantially different from Quillen's.  Indeed, Quillen's result is more of a corollary of his deep theorem regarding the equivalence of the rational homotopy category and the homotopy category of 2-reduced rational coalgebras.  Since there is no analogue of Quillen's theorem for fields other than $\mathbb{Q}$, his proof has no hope of extending beyond the rational case.
	
		The rest of the paper is organized as follows.  In section \ref{section: spectral sequences of coalgebras} we recall the definition of a spectral sequence of coalgebras.  We also discuss the notion of convergence of spectral sequences as coalgebras.  In section \ref{section: fdgms} we review filtered differential graded modules and how they give rise to spectral sequences.  In section \ref{section: fdgcs} it is shown that a filtered differential graded coalgebra over a field gives rise to a spectral sequences of coalgebras. This fact seems to be well-known to experts although the author does not know a reference.  The details of the proof are not used elsewhere in the paper.
		
		In sections \ref{section: EZ} and \ref{section: homology coalgebra}, we review some necessary facts about the Eilenberg--Zilber map and the homology coalgebra of a topological space.  These sections provide necessary notation and groundwork for the proof of the main theorem which appears in section \ref{section: SSS}.  The proof that the homological Serre spectral sequence is a spectral sequence of coalgebras is relatively easy given the result of section \ref{section: fdgcs}.  The bulk of the work in proving the main result is in identifying the comultiplication in the $E_2$ page of the spectral sequence with the usual comulitplication on homology from section \ref{section: homology coalgebra}.  The paper concludes with section \ref{section: examples} with some example computations that highlight the use of the co-Leibniz rule and subtleties in our notion of convergence as coalgebras.
		
		\section{Spectral Sequences of Coalgebras} \label{section: spectral sequences of coalgebras}
		In this section we establish notation for spectral sequences and discuss the less well-known notion of a spectral sequence of coalgebras.  Fix a commutative ring $R$.
		\begin{defn}
			A  \emph{spectral sequence} over $R$ consists of the following data:
			\begin{enumerate}
				\item Bigraded $R$-modules $E^r_{*,*}$ for all $r\in \mathbb{Z}_{\geq0}$.  The module $E^r_{*,*}$ is called the $r$-th page of the spectral sequence. 
				\item Homomorphisms $d^r\colon E^r_{p,q}\to E^r_{p-r,q+r-1}$ with $d^r\circ d^r=0$ when this makes sense.
				\item $E^{r+1}_{p,q}$ is isomorphic to the homology of $(E^r_{*,*},d^r)$ at $(p,q)$.
			\end{enumerate}
		The category of spectral sequences over $R$ is denoted $\SSEQ_R$.
		\end{defn}

		We will denote a spectral sequence by $(E,d)$, meaning that the $r$-th page will be $E^r_{*,*}$ and the differential on this page is $d^r$.  We note that each page $E^r$ is composed of countably many chain complexes with differentials given by the $d^r$.  The homology of the $E^r$ is just the homology of these chain complexes.  
		\begin{defn}
			Let $(E,d)$ and $(\widetilde{E},\widetilde{d})$ be two spectral sequences.  A \emph{morphism of spectral sequences} $\phi\colon E\to \widetilde{E}$ consists of $R$-linear maps $\phi^r\colon E^r_{*,*}\to \widetilde{E}^r_{*,*}$ for all $r$ such that:
			\begin{enumerate}
				\item $\phi^r\circ d^r = \widetilde{d}^r\circ \phi^r$ whenever this makes sense.
				\item After identifying $E^{r+1}\cong H(E^r)$ and $\widetilde{E}^{r+1}\cong H(\widetilde{E}^r)$ we have $\phi^{r+1}=H(\phi^r)$.
			\end{enumerate}
		\end{defn}
		We say that a morphism $\phi\colon E\to \widetilde{E}$ is an isomorphism if the maps $\phi^r$ are all isomorphisms.  As it turns out, it is sufficient to see that $\phi^0$ is an isomorphism.  Indeed, we have required that $\phi^0$ is a chain map of all the complexes comprising $E^0$. A chain map that is an isomorphism in all degrees must be a quasi-isomorphism. Thus the induced map $\phi^1$ is an isomorphism.  Working inductively, we see that $\phi^2$ is an isomorphism and so on.
		\begin{lem}\label{spec_iso}
			Let $\phi\colon E\to \widetilde{E}$ be a morphism of spectral sequences.  Then $\phi$ is an isomorphism of spectral sequences if and only if $\phi^0$ is an isomorphism of bigraded complexes.
		\end{lem}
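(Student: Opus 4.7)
The forward direction is immediate from the definition: if $\phi$ is an isomorphism of spectral sequences, then by definition every $\phi^r$ is an isomorphism, and in particular $\phi^0$ is an isomorphism of bigraded modules. Since $\phi^0$ is additionally required to commute with $d^0$, it is an isomorphism of bigraded complexes.

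For the nontrivial direction, my plan is to induct on $r$, essentially formalizing the sketch already given in the paragraph preceding the statement. The base case $r=0$ is the hypothesis. For the inductive step, suppose $\phi^r\colon E^r_{*,*}\to\widetilde{E}^r_{*,*}$ is an isomorphism of bigraded modules that commutes with the differentials $d^r$ and $\widetilde{d}^r$. Then $\phi^r$ is simultaneously a chain map and a bijection on underlying bigraded modules, so it is an isomorphism of chain complexes (page by page), and hence induces an isomorphism on homology. By condition (2) in the definition of a morphism of spectral sequences, $\phi^{r+1}$ is identified with $H(\phi^r)$ under the isomorphisms $E^{r+1}\cong H(E^r)$ and $\widetilde{E}^{r+1}\cong H(\widetilde{E}^r)$, so $\phi^{r+1}$ is an isomorphism of bigraded modules as well. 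Since $\phi^{r+1}$ is also a chain map for $d^{r+1}$ by condition (1), the induction continues.

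There is no real obstacle here; the only minor subtlety is to note that the "chain map that is a bijection" argument works degreewise on each of the countably many chain complexes comprising a given page, and that once we know $\phi^{r+1}$ is a bijection of bigraded modules, condition (1) of the definition automatically provides the compatibility with $d^{r+1}$ needed to iterate. Thus the induction goes through and $\phi^r$ is an isomorphism for every $r$, which is the definition of $\phi$ being an isomorphism of spectral sequences.
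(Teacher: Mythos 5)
Your proof is correct and follows the same route as the paper: the paper's argument (given in the paragraph preceding the lemma) is exactly this induction, using that a chain map which is an isomorphism in all degrees induces an isomorphism on homology, together with the condition $\phi^{r+1}=H(\phi^r)$ from the definition of a morphism of spectral sequences. Your write-up merely makes the base case, inductive step, and the trivial forward direction explicit, which matches the intended proof.
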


		We now limit the scope of our discussion.  A first quadrant spectral sequence is a spectral sequence $(E,d)$ such that $E^{r}_{p,q}=0$ if either $p$ or $q$ is negative.  Restricting to first quadrant spectral sequences greatly simplifies the theory, due in large part to the following pleasant consequence.
		\begin{proposition}
			Let $(E,d)$ be a first quadrant spectral sequence.  For all $p$ and $q$ there exists an $r$ such that $E^s_{p,q}\cong E^{r}_{p,q}$ for all $s\geq r$.
		\end{proposition}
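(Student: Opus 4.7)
The plan is to exploit the first-quadrant hypothesis to force both the incoming and outgoing differentials at a fixed bidegree $(p,q)$ to vanish once $r$ is large enough, so that subsequent pages at $(p,q)$ are just homology of a zero complex and therefore agree with $E^r_{p,q}$.

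First I would write down the two differentials touching the spot $(p,q)$: the outgoing $d^r \colon E^r_{p,q} \to E^r_{p-r,q+r-1}$ and the incoming $d^r \colon E^r_{p+r,q-r+1} \to E^r_{p,q}$. Since every page $E^{r+1}_{p,q}$ is the homology of the chain complex consisting of these two maps composed at $(p,q)$, it suffices to show that for $r$ large, both the target of the outgoing $d^r$ and the source of the incoming $d^r$ are zero at bidegree $(p,q)$.

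Next I would use the first-quadrant hypothesis directly: $E^r_{p-r,q+r-1} = 0$ as soon as $p-r < 0$, i.e.\ $r > p$; and $E^r_{p+r,q-r+1} = 0$ as soon as $q-r+1 < 0$, i.e.\ $r > q+1$. Setting $N = \max(p+1, q+2)$, for every $r \geq N$ both differentials at $(p,q)$ are zero, so
\[
E^{r+1}_{p,q} \;\cong\; \ker(d^r)/\mathrm{im}(d^r) \;=\; E^r_{p,q}/0 \;=\; E^r_{p,q}.
\]
Iterating this gives $E^s_{p,q} \cong E^N_{p,q}$ for all $s \geq N$, which is the desired statement with $r = N$.

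There is really no obstacle here; the only thing to be careful about is making sure one accounts for \emph{both} the source and the target of a differential landing at or leaving $(p,q)$, rather than just one of them, since a nonzero incoming differential would force $E^{r+1}_{p,q}$ to be a proper quotient of $E^r_{p,q}$. Once both have been shown to vanish eventually, the stabilization is immediate.
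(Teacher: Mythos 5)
Your proof is correct and is essentially the paper's own argument: both take $r=\max(p+1,q+2)$ and use the first-quadrant vanishing of the target of the outgoing differential and the source of the incoming differential at $(p,q)$ to conclude that the homology at that spot equals $E^r_{p,q}$ on all later pages.
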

		\begin{proof}
			Let $r=\max(p+1,q+2)$.  Assume $s>r$, the module $E^{s}_{p,q}$ is isomorphic to the homology of the $(s-1)$-page at bidegree $(p,q)$.  By definition, this is the quotient $$\ker(d^{s-1}\colon E^{s-1}_{p,q}\to E^{s-1}_{p-s+1,q+s-2})/\Ima(d^{s-1}\colon E^{s-1}_{p+s-1,q-s+2}\to E^{s-1}_{p,q})$$
			Our assumption on $r$ implies that $p-s+1<0$ and $q-s+2<0$. Thus $E^{s-1}_{p-s+1,q+s-2} = E^{s-1}_{p+s-1,q-s+2} =0$ and the result follows.
		\end{proof}
		
		The proposition allows us to talk about the so-called infinity page of first quadrant spectral sequences, 
		\[
			E^r_{p,q} = E^{r+1}_{p,q}=\dots =\colon E^{\infty}_{p,q},  
		\]
		
		where $r$ is sufficiently large.  
		
		A major focus of this paper is on spectral sequences with the additional structure that every page is a coalgebra.  Recall an $R$ coalgebra is an $R$ module $C$, together with an $R$-linear maps $\triangle\colon C\to C\otimes C$ and $\epsilon \colon C\to R$ called the comultiplication and counit respectively.  The maps $\triangle$ and $\epsilon$ are subject to some associativity and compatibility conditions; we refer the reader to \cite{mayponto} for a full discussion. 
		\begin{defn}
			A spectral sequence $(E,d)$ over a field $k$ is a \emph{spectral sequence of coalgebras} over $k$ if:
			\begin{enumerate}
				\item Each page $E^r_{*,*}$ is a a bigraded coalgebra with comultiplication $\triangle^r$. 
				\item The comultiplications satisfy the co-Leibniz rule: $\triangle^r\circ d^r = (d^r\otimes 1 \pm 1\otimes d^r)\circ \triangle^r$ where the sign follows the usual Koszul sign convention.
			\end{enumerate}  
		\end{defn}
	
		\begin{remark}
			The category of spectral sequences over a ring can be given the structure of a symmetric monoidal category.  In this context, it turns out the definition above is equivalent to the notion of a comonoid in this category.  We will return to this perspective in Section \ref{section: fdgcs}.
		\end{remark}
		When $(E,d)$ is a first quadrant spectral sequnece of coalgebras the infinity page is naturally a bigraded coalgebra and we will denote the comultiplication by $\triangle^{\infty}$.
		
		We end this section with a discussion of convergence of first quadrant spectral sequences. There is subtlety surrounding the notion of convergence in the context of spectral sequences of coalgebras and we will need to be careful. 
		\begin{defn}
			Let $M$ be an $R$-module.  By a \emph{filtration} of $M$ we mean an increasing sequence of submodules 
			\[
				\dots \subset F^0\subset F^1\subset \dots\subset M
			\]
			If $M$ is a graded $R$-module we will denote the intersection $M_m\cap F^n$ by $F^n_m$. 
		\end{defn}

		\begin{defn}
			Let $M$ be a filtered graded $R$-module and $(E,d)$ a first quadrant spectral sequence.  We say that $E$ \emph{converges} to $M$, written $E\Rightarrow M$, if for all $p$ and $q$ there are isomorphisms:
			\begin{equation} \label{filtration quotients}
				E^{\infty}_{p,q}\cong F^{p}_{p+q}/F^{p-1}_{p+q}
			\end{equation}
		\end{defn}
		
		  A spectral sequence converges to a filtered graded module if the infinity page of the sequence tells you what the module is up to the extension problem induced by the filtration.  Of course, the extension problem is generally difficult to solve.  However, in the case when $R$ is a field it is trivial and we have:
		\begin{lem}\label{first quadrant convergence}
			Let $M$ be a filtered graded vector space over a field $k$ and let $(E,d)$ be a first quadrant spectral sequence over $k$ that converges to $M$.  Then we have isomorphisms:
			\[
				M_n\cong \bigoplus\limits_{p+q=n}E^{\infty}_{p,q}
			\]
		\end{lem}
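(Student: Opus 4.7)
My plan is to argue that, because we are in the first quadrant, the filtration on each $M_n$ is actually finite, and then to split the resulting short exact sequences using the fact that every short exact sequence of vector spaces over $k$ splits.

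First I would translate the first quadrant hypothesis into information about the filtration. Since $E^{\infty}_{p,q}=0$ whenever $p<0$ or $q<0$, the convergence isomorphism $E^{\infty}_{p,q}\cong F^{p}_{p+q}/F^{p-1}_{p+q}$ forces $F^{p}_{n}=F^{p-1}_{n}$ for every $p<0$ and for every $p>n$. Together with the standing assumption (implicit in writing $E\Rightarrow M$ for a first quadrant sequence) that the filtration on $M$ is Hausdorff and exhaustive, this collapses the filtration on $M_n$ to a finite chain
\[
0=F^{-1}_{n}\subset F^{0}_{n}\subset F^{1}_{n}\subset\dots\subset F^{n}_{n}=M_{n}.
\]

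Next I would proceed by induction on $p$, showing that
\[
F^{p}_{n}\cong\bigoplus_{i=0}^{p}E^{\infty}_{i,n-i}
\]
for every $0\leq p\leq n$. The base case $p=0$ is immediate from the convergence isomorphism $F^{0}_{n}=F^{0}_{n}/F^{-1}_{n}\cong E^{\infty}_{0,n}$. For the inductive step, the inclusion $F^{p-1}_{n}\hookrightarrow F^{p}_{n}$ fits into a short exact sequence of $k$-vector spaces
\[
0\to F^{p-1}_{n}\to F^{p}_{n}\to F^{p}_{n}/F^{p-1}_{n}\to 0,
\]
and the quotient is $E^{\infty}_{p,n-p}$ by convergence. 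Since every short exact sequence of vector spaces splits, we obtain $F^{p}_{n}\cong F^{p-1}_{n}\oplus E^{\infty}_{p,n-p}$, which combined with the inductive hypothesis gives the desired decomposition. Taking $p=n$ yields $M_{n}=F^{n}_{n}\cong\bigoplus_{p+q=n}E^{\infty}_{p,q}$.

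There is no real obstacle here beyond bookkeeping: the essential content is that the extension problem, which would be genuinely nontrivial over an arbitrary ring, disappears completely over a field. The only point that requires a moment of care is justifying that the filtration on $M_n$ starts at $0$ and exhausts $M_n$ in finitely many steps, which is why I would spell out the consequences of the first quadrant hypothesis before beginning the induction.
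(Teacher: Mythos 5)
Your argument is correct and is exactly the reasoning the paper leaves implicit: the first quadrant hypothesis bounds the filtration on each $M_n$, and since every short exact sequence of $k$-vector spaces splits, the extension problem disappears and the finite filtration yields the direct sum decomposition. Your aside about needing the filtration to be exhaustive and to start at $0$ (implicit in the paper's notion of convergence for first quadrant sequences) is the right point of care, and your handling of it is fine.
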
  
		When $M$ is a filtered vector space we denote the associated graded module by
		\begin{equation}\label{associated graded}
			\Gr(M) = \bigoplus\limits_{i\in \mathbb{Z}}F^{i}/F^{i-1}
		\end{equation}
		
		Another way to state Lemma \ref{first quadrant convergence} is that there are isomorphisms:
		\begin{equation}\label{gr_iso}
			\Gr(M)\cong \bigoplus\limits_{n\geq 0}\bigoplus\limits_{p+q=n}E^{\infty}_{p,q} 
		\end{equation}
		We use this isomorphism to discuss the notion of convergence of spectral sequences of coalgebras.  Suppose $C$ is a filtered graded coalgebra with comultiplication $\triangle$.  We will assume $\triangle$ respects the filtration in the sense that $\triangle(F^m)\subset F^m\otimes F^m$ for all $m$.  One can check there is an induced comultiplication $\Gr(\triangle)$ on $\Gr(C)$.  Together with the obvious map $\Gr(\epsilon)\colon \Gr(C)\to R$, we see that $\Gr(C)$ can be given the structure of a bigraded coalgebra.
		 
		\begin{defn}\label{convergence as coalgebras}
			In the notation of the last paragraph, we say that $(E,d)$ converges to $(C,\triangle)$ \emph{as a coalgebra} if $E\Rightarrow C$ and the isomorphism \eqref{gr_iso} is an isomorphism of bigraded coalgebras.
		\end{defn}
		\begin{remark}
			It is possible to construct non-isomorphic coalgebras $(C,\triangle_C)$ and $(D,\triangle_D)$ that are both the limit of a single spectral sequence of coalgebras $(E,d)$.  To see this, it is enough to construct a single filtered vector space $C=D$ with two different coalgebra structures $\triangle_1$ and $\triangle_2$ that induce the same comultiplcation $\Gr(\triangle_1) =\Gr(\triangle_2)$.  In particular, even if a spectral sequence converges to $(C,\triangle)$ as a coalgebra we cannot always recover the comultiplcation $\triangle$ even if we have full knowledge of $E^{\infty}$.  
		\end{remark} 

	\section{Filtered Differential Graded Modules}\label{section: fdgms}
		In this section we establish notation for filtered differential graded modules and  the associated spectral sequences.  Throughout, we will fix a unital, associative, and commutative ring $R$ and all modules will be assumed to be $R$-modules unless otherwise stated.
		
		By a filtered differential graded module $M$ we mean a homologically graded differential module $M$ with increasing filtration.  That is, $M$ is a graded module with boundary map $\partial\colon M\to M$ that lowers degree by $1$, and has a filtration 
		\[
			\dots\subset F^{p-1}\subset F^p\subset F^{p+1}\subset\dots \subset M
		\]
		We require that our boundary map $\partial$ respects this filtration in the sense that $\partial(F^n)\subset F^n$.  We will not assign different names to the boundary maps for each $F^p$, denoting all of them by $\partial$.  Similarly we will use $\iota\colon F^p\to F^{p+k}$ for all inclusions of the $F^i$.  A filtered differential graded module will be denoted by $(M,\partial,\{F^p\})$, and we write $F^p_q$ for the intersection $F^p\cap M_q$.

		A morphism of filtered differential graded modules is a chain map that preserves the filtrations.  More precisely, given two filtered differential graded modules $(M,\partial_M,\{F^p\})$ and $(N,\partial_N,\{G^p\})$, a morphism $\varphi$ between the two is an $R$-module homomorphism $\varphi\colon M\to N$ such that 
		\begin{enumerate}
			\item $\varphi\circ \partial_M = \partial_N\circ \varphi$
			\item $\varphi(F^p)\subset G^p$ for all $p$.
		\end{enumerate}
		The category of all filtered differential graded modules over $R$ and morphisms between them will be denoted by $\FDGM_R$.  The construction of a spectral sequence from a filtered differential graded module will be a functor $\FDGM_R\to \SSEQ_R$.

		Fix an $(M,\partial,\{F^p\})\in \FDGM_R$ and define submodules
		\begin{align*}
			Z^r_{p,q} & = F^p_{p+q}\cap \partial^{-1}(F^{p-r}_{p+q-1}) \\
			B^r_{p,q} & = F^p_{p+q}\cap \partial(F^{p+r}_{p+q+1}) 
		\end{align*}
		The elements of $Z^r_{p,q}$ are called the $r$ almost cycles and the elements of $B^r_{p,q}$ are the $r$ almost boundaries. The elements in $Z^r_{p,q}$ are the elements in $F^p_{p+q}$ whose boundaries descend $r$ levels down in the filtration.  Similarly, the elements in $B^r_{p,q}$ are the elements in $F^p_{p+q}$ which are boundaries of elements $r$ levels up in the filtration.  
		
		\begin{remark} \label{B is image of Z}
			Following the definitions we see that $\partial(Z^r_{p,q})=B^r_{p-r,q+r-1}$.
		\end{remark}
		
		For fixed $p$ and $q$, the $Z^r_{p,q}$ and $B^r_{p,q}$ fit into a tower of submodules.
		\begin{lem} \label{inclusion lemma}
			For all $r,s>1$ we have the following chain of inclusions:
			\[
				B^1_{p,q}\subset \dots B^r_{p,q}\subset\dots\subset Z^s_{p,q}\subset\dots \subset Z^1_{p,q}
			\]
		\end{lem}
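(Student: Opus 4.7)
The plan is to verify three families of inclusions separately, all of which follow directly from the definitions together with the fact that $\partial^2 = 0$ and that the $F^i$ form an increasing sequence.

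First I would show $Z^{r+1}_{p,q} \subset Z^r_{p,q}$ for every $r \geq 1$. Since $F^{p-r-1} \subset F^{p-r}$ by monotonicity of the filtration, we have $\partial^{-1}(F^{p-r-1}_{p+q-1}) \subset \partial^{-1}(F^{p-r}_{p+q-1})$, and intersecting both sides with $F^p_{p+q}$ gives the desired containment. Next, $B^r_{p,q} \subset B^{r+1}_{p,q}$ follows symmetrically: any element of $F^p_{p+q}$ that equals $\partial(y)$ for some $y \in F^{p+r}_{p+q+1}$ also equals $\partial(y)$ for the same $y$ now viewed in the larger submodule $F^{p+r+1}_{p+q+1}$.

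The bridge step is $B^r_{p,q} \subset Z^s_{p,q}$ for all $r, s \geq 1$. If $x \in B^r_{p,q}$, then by definition $x \in F^p_{p+q}$ and $x = \partial(y)$ for some $y \in F^{p+r}_{p+q+1}$. To place $x$ in $Z^s_{p,q}$ I need $\partial(x) \in F^{p-s}_{p+q-1}$, but $\partial(x) = \partial^2(y) = 0$, which lies in every submodule of the filtration trivially. Thus $x \in Z^s_{p,q}$.

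None of the steps presents a real obstacle; the only mild subtlety is bookkeeping the bidegrees in each definition to confirm that the boundaries and inclusions land in the correct submodules. Combining the three families of inclusions yields the full chain displayed in the statement.
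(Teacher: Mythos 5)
Your proof is correct, and it is exactly the routine verification the paper leaves implicit (the lemma is stated without proof there): monotonicity of the filtration gives the nesting of the $Z$'s and of the $B$'s, and $\partial^2=0$ gives $B^r_{p,q}\subset Z^s_{p,q}$ since any boundary lands in $\partial^{-1}(F^{p-s}_{p+q-1})$ trivially. No gaps; the bidegree bookkeeping you flag is indeed the only thing to check.
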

		
		This lemma justifies the following definition of our spectral sequence terms.
		\begin{defn}
		Given $M\in \FDGM_R$, the $r$-th pages of the associated spectral sequence are given by:
		\[
			E^r_{p,q}=(Z^r_{p,q}+F^{p-1}_{p+q}) / (B^{r-1}_{p,q}+F^{p-1}_{p+q})
		\]
		\end{defn}
		\begin{remark}
			This definition for the terms of our spectral sequence may not be as familiar to the reader as some other presentations.  This choice of spectral sequence terms comes from Weibel's book \cite{Weibel}.  While different choices, such as those in \cite{Mc} or \cite{CE}, give equivalent information we find this form best suits our purposes in Section \ref{section: fdgcs}.
		\end{remark}
		
		By Remark \ref{B is image of Z} and Lemma \ref{inclusion lemma} we have $\partial\colon Z^r_{p,q}\to Z^r_{p-r,q+r-1}$.  Since we have 
		\[	
			\partial(B^{r-1}_{p,q}+F^{p-1}_{p+q}) \subset 0+F^{p-1}_{p+q-1},
		\]
		we get an induced map $d^r_{p,q}:E^r_{p,q}\to E^r_{p-r,q+r-1}$ that fits into the following commutative diagram:
		\begin{equation} \label{sequence square}
		\begin{tikzcd}
			{Z^r_{p,q}} \arrow[r, "\partial"] \arrow[d, "\pi^r_{p,q}"'] & {Z^r_{p-r,q+r-1}} \arrow[d, "\pi^r_{p-r,q+r-1}"] \\
			{E^r_{p,q}} \arrow[r, "d^r_{p,q}"]                       & {E^r_{p-r,q+r-1}}                 
		\end{tikzcd}
		\end{equation}
		where the $\pi'$s are the canonical quotient maps.  The subscripts $p,q$ may be omitted when no confusion is possible. The following proposition is standard and a proof can be found in \cite{Weibel}.

		\begin{proposition} \label{functor}
			The collection $(E,d)$ is a spectral sequence over $R$.  Moreover, the formation of this spectral sequence from a filtered differential graded module specifies a functor $\textbf{E}\colon \FDGM_R\to \SSEQ_R$.
		\end{proposition}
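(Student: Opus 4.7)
The plan is to verify the three axioms of a spectral sequence in turn, and then upgrade the construction to a functor.

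First, the relation $d^r \circ d^r = 0$ is immediate from diagram \eqref{sequence square}: both copies of $d^r$ lift (via the quotient maps $\pi^r$) to $\partial$, and $\partial^2 = 0$. So the main content is the second axiom: identifying the homology of $(E^r_{*,*}, d^r)$ at bidegree $(p,q)$ with $E^{r+1}_{p,q}$. I would unwind this from the definitions. A representative $x \in Z^r_{p,q}$ descends to zero in $E^r_{p-r,q+r-1}$ precisely when $\partial x \in B^{r-1}_{p-r,q+r-1} + F^{p-r-1}_{p+q-1}$; writing $\partial x = \partial u + v$ with $u \in F^{p-1}_{p+q}$ and $v \in F^{p-r-1}_{p+q-1}$, we obtain $\partial(x-u) \in F^{p-r-1}_{p+q-1}$, so $x - u \in Z^{r+1}_{p,q}$ and $[x] = [x-u]$ in $E^r_{p,q}$. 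This shows
\[
\ker d^r_{p,q} = (Z^{r+1}_{p,q} + F^{p-1}_{p+q})/(B^{r-1}_{p,q} + F^{p-1}_{p+q}).
\]
Dually, using Remark \ref{B is image of Z}, the image of $d^r_{p+r,q-r+1}$ in $E^r_{p,q}$ is $(B^r_{p,q} + F^{p-1}_{p+q})/(B^{r-1}_{p,q} + F^{p-1}_{p+q})$. Taking the quotient of kernel by image (and using Lemma \ref{inclusion lemma} to ensure $B^{r-1} \subset B^r \subset Z^{r+1}$) yields
\[
\bigl(Z^{r+1}_{p,q} + F^{p-1}_{p+q}\bigr)\big/\bigl(B^r_{p,q} + F^{p-1}_{p+q}\bigr) = E^{r+1}_{p,q},
\]
which is exactly the definition.

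For functoriality, given a morphism $\varphi\colon M\to N$ in $\FDGM_R$, the two conditions in its definition immediately imply $\varphi(Z^r_{p,q}(M)) \subset Z^r_{p,q}(N)$ and $\varphi(B^r_{p,q}(M))\subset B^r_{p,q}(N)$, so $\varphi$ descends to $R$-linear maps $\varphi^r\colon E^r(M)\to E^r(N)$. Commutativity of $\varphi$ with the two differentials gives $\varphi^r \circ d^r_M = d^r_N \circ \varphi^r$, and the identification in the second step is evidently natural in $\varphi$, so $\varphi^{r+1} = H(\varphi^r)$ under the given isomorphisms. Identity and composition are preserved on the nose because they are preserved on the defining subquotients, so $\mathbf{E}$ is a functor.

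The main obstacle is really just the second step: one has to be disciplined with the bookkeeping of the nested subquotients and separate carefully the contributions from $F^{p-1}_{p+q}$, $B^{r-1}_{p,q}$, $B^r_{p,q}$, $Z^r_{p,q}$, and $Z^{r+1}_{p,q}$. Everything else is formal consequences of $\partial^2 = 0$, the inclusion tower in Lemma \ref{inclusion lemma}, and Remark \ref{B is image of Z}.
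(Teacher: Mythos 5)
Your argument is correct and follows essentially the standard route: the paper itself offers no proof of this proposition (it defers to Weibel), and your identification of $\ker d^r_{p,q}=(Z^{r+1}_{p,q}+F^{p-1}_{p+q})/(B^{r-1}_{p,q}+F^{p-1}_{p+q})$ and $\Ima d^r_{p+r,q-r+1}=(B^{r}_{p,q}+F^{p-1}_{p+q})/(B^{r-1}_{p,q}+F^{p-1}_{p+q})$, followed by the third isomorphism theorem, is exactly that standard argument transported to the paper's choice of spectral sequence terms. The functoriality check via $\varphi(Z^r_{p,q})\subset Z^r_{p,q}$ and $\varphi(B^r_{p,q})\subset B^r_{p,q}$ is likewise the usual one, so nothing essential is missing.
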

	
		In nice cases, this spectral sequence converges to the homology of our differential graded module.
		\begin{defn}\label{first quadrant filtration}
			Let $\{F^p \}$ be a filtration of of a graded module $M$.  We say $\{F^p \}$ is a \emph{first quadrant filtration} if in each grading $n$ the filtration has the form:
			\[
				0 = \dots = F^{-1}_n\subset F^{0}_n\subset\dots\subset F^n_n = F^{n+1}_n = \dots=M_n
			\] 
		\end{defn}
		As one might expect, the terminology here reflects the fact that a filtered differential graded module with a first quadrant filtration leads to a first quadrant spectral sequence.  This is essentially obvious, as one can check that:
		\[
			E^0_{p,q} = F^{p}_{p+q}/F^{p-1}_{p+q}
		\]
		implying that for first quadrant filtrations $E^0_{p,q}=0$ for all $p$ or $q$ less than $0$. 
		\begin{proposition}
			Let $(M,\partial,\{F^p \})$ be a filtered differential graded module with a first quadrant filtration.  Then the homology of $M$ is a filtered graded module and the spectral sequence $\textbf{E}(M)$ converges to $H_*(M)$.
		\end{proposition}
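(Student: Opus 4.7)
The plan is to define a natural filtration on $H_*(M)$, then identify its associated graded module with the infinity page of the spectral sequence $\mathbf{E}(M)$.

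First I would equip $H_*(M)$ with the filtration
\[
    F^p H_n(M) := \operatorname{Im}\bigl(H_n(F^p) \to H_n(M)\bigr),
\]
where the map is induced by the inclusion $\iota\colon F^p \hookrightarrow M$. The first quadrant hypothesis gives $F^p_n = M_n$ whenever $p \geq n$, so the filtration exhausts $H_n(M)$, and clearly $F^{p-1} H_n \subset F^p H_n$.

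Next I would analyze the stabilization of the modules $Z^r_{p,q}$ and $B^r_{p,q}$ under the first quadrant hypothesis. For $r > p$ one has $F^{p-r}_{p+q-1} = 0$, so
\[
    Z^r_{p,q} \;=\; F^p_{p+q} \cap \ker(\partial) \;=:\; Z^\infty_{p,q}
\]
is independent of $r$ for $r > p$. Similarly, for $r > q$ one has $F^{p+r}_{p+q+1} = M_{p+q+1}$, so
\[
    B^r_{p,q} \;=\; F^p_{p+q} \cap \partial(M_{p+q+1}) \;=:\; B^\infty_{p,q}.
\]
Hence for $r$ sufficiently large, $E^r_{p,q} = (Z^\infty_{p,q} + F^{p-1}_{p+q})/(B^\infty_{p,q} + F^{p-1}_{p+q}) = E^\infty_{p,q}$, consistent with the notation already fixed.

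The crux of the argument is to produce an isomorphism $E^\infty_{p,q} \cong F^p H_{p+q}(M) / F^{p-1} H_{p+q}(M)$. I would define a map $\phi\colon Z^\infty_{p,q} \to F^p H_{p+q}(M) / F^{p-1} H_{p+q}(M)$ sending a cycle $z$ to the class $[z] + F^{p-1}H$. This is surjective because $F^p H_{p+q}(M)$ is by definition represented by cycles lying in $F^p_{p+q}$. The key identification is computing the kernel: $\phi(z) = 0$ precisely when there exist $z' \in F^{p-1}_{p+q}$ and $m \in M_{p+q+1}$ with $z = z' + \partial(m)$; since $z, z' \in F^p$, this forces $\partial(m) \in F^p_{p+q} \cap \partial(M) = B^\infty_{p,q}$. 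Therefore
\[
    \ker(\phi) \;=\; Z^\infty_{p,q} \cap \bigl(F^{p-1}_{p+q} + B^\infty_{p,q}\bigr).
\]
Combining this with the second isomorphism theorem,
\[
    (Z^\infty_{p,q} + F^{p-1}_{p+q})/(B^\infty_{p,q} + F^{p-1}_{p+q}) \;\cong\; Z^\infty_{p,q}/\bigl(Z^\infty_{p,q} \cap (B^\infty_{p,q} + F^{p-1}_{p+q})\bigr),
\]
yields exactly the required isomorphism $E^\infty_{p,q} \cong F^p H_{p+q}/F^{p-1} H_{p+q}$.

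The only real obstacle is the kernel computation for $\phi$; everything else is bookkeeping about degrees and the first quadrant assumption. Once one notices that the crucial containments $\partial(m) \in F^p$ and $z'$ being automatically a cycle both follow from the filtration being preserved by $\partial$, the argument collapses into two applications of standard isomorphism theorems.
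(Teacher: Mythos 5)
Your argument is correct and is exactly the standard proof that the paper leaves to the references: stabilize $Z^r_{p,q}$ and $B^r_{p,q}$ using the first quadrant hypothesis, filter $H_*(M)$ by the images of $H_*(F^p)$, and identify $E^\infty_{p,q}$ with $F^pH_{p+q}/F^{p-1}H_{p+q}$ via the cycle map and the isomorphism theorems. The only quibble is your closing attribution: that $z'$ is automatically a cycle follows from $\partial z=0$ and $\partial^2=0$, not from $\partial$ preserving the filtration, but this does not affect the argument.
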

		
	\section{Filtered Differential Graded Coalgebras}\label{section: fdgcs}
	
		Let $R=k$ be a field.  In this section we consider conditions under which the spectral sequence associated to an object $(C,\partial,\{F^p\})$ of $\FDGM_k$ admits a coalgebra structure.   
		\par 
		Recall that the grading on $C$ allows us define a grading on $C\otimes C$ by 
		\[
			(C\otimes C)_n = \bigoplus\limits_{a+b=n}C_a\otimes C_b
		\]
		We will also extend the differential structure of $C$ to one on $C\otimes C$ with differential $\delta$ of $C\otimes C$ given by 
		\[
			\delta_n = \bigoplus\limits_{a+b=n} \partial\otimes 1 + (-1)^a1\otimes \partial
		\]
		Finally, we extend the filtration on $C$ to a filtration $\{G^p\}$ of $C\otimes C$, called the filtration induced by $F$, by defining 
		\[
			G^p=\sum\limits_{c+d=p} F^c\otimes F^d
		\]
		 
		It is easily checked that $(C\otimes C,\delta,\{G^p\})$ is also a filtered differential graded module.  When the map $\triangle\colon C\to C\otimes C$ is a morphism in $\FDGM_k$, we will call $C$ a filtered differential graded coalgebra. 
		
		We will work to identify the spectral sequence $(E,e)=\textbf{E}(C\otimes C,\delta,\{G^p\})$ in terms of the spectral sequence $(D,d)=\textbf{E}(C,\partial,\{F^p\})$. To be precise, we define a new spectral sequence $(D \otimes D,d\otimes d)$ with $r$-th page 
		\[
			(D\otimes D)^r_{p,q} = \bigoplus\limits_{a+b=p}\bigoplus\limits_{c+d=q} D^r_{a,c}\otimes D^r_{b,d}
		\] 
		and differential  $d\otimes d$ given on each level of the direct sum by $(d^r\otimes 1 + (-1)^{a+c}1\otimes d^r)$. That each page is given as the homology of the previous follows from the Kunneth isomorphism.  We will show that $(E^r,e^r )$ is isomorphic to $(D\otimes D,d\otimes d)$ with these differentials. 
		 
		To fix some notation, we will use $Y^r_{p,q}$ and $A^r_{p,q}$ for the $r$ almost cycles and boundaries of $C$ and $Z^r_{p,q}$ and $B^r_{p,q}$ for the $r$-almost cycles and boundaries of $C\otimes C$.
		\begin{lem}\label{identify Z}
			In the notation above:
			\[
				Z^r_{p,q} +G^{p-1} = \bigoplus\limits_{c+d=p+q}\sum\limits_{a+b=p} Y^r_{a,c-a}\otimes Y^r_{b,d-b}+G^{p-1}.
			\]
		\end{lem}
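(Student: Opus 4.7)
The plan is to establish both inclusions in the claimed equality of submodules of $G^p_{p+q}$. The inclusion $\supseteq$ is straightforward: given $y \in Y^r_{a,c-a}$ and $y' \in Y^r_{b,d-b}$ with $a+b=p$ and $c+d=p+q$, the tensor $y \otimes y'$ lies in $F^a_c \otimes F^b_d \subseteq G^p_{p+q}$, and its Koszul-signed boundary $\delta(y \otimes y') = \partial y \otimes y' \pm y \otimes \partial y'$ lies in $F^{a-r} \otimes F^b + F^a \otimes F^{b-r} \subseteq G^{p-r}_{p+q-1}$, since $(a-r)+b = a+(b-r) = p-r$. This places $y \otimes y'$ in $Z^r_{p,q}$, and the $G^{p-1}$ terms match trivially.

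For the harder inclusion $\subseteq$, my plan is induction on $r$. The base case $r=0$ is immediate: $Y^0_{a,c-a} = F^a_c$ (since $\partial$ preserves the filtration, the $0$-almost cycle condition is automatic) and $Z^0_{p,q} = G^p_{p+q} = \sum_{a+b=p,\, c+d=p+q} F^a_c \otimes F^b_d$ by the very definition of the induced filtration. For the inductive step, take $z \in Z^r_{p,q}$; the containment $Z^r \subseteq Z^{r-1}$ together with the induction hypothesis produces a representative $z \equiv \sum_i y_i \otimes y_i' \pmod{G^{p-1}}$ with each $y_i \in Y^{r-1}_{a_i,c_i-a_i}$ and $y_i' \in Y^{r-1}_{b_i,d_i-b_i}$ having indices summing correctly. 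The strengthened condition $\delta z \in G^{p-r}$ (rather than merely $G^{p-r+1}$) is then exploited: passing to the associated graded $G^p/G^{p-1} \cong \bigoplus_{a+b=p}(F^a/F^{a-1}) \otimes (F^b/F^{b-1})$ and invoking the Künneth theorem over the field $k$, which identifies cycles in a tensor product of chain complexes with tensor products of cycles up to boundaries, we adjust the $y_i, y_i'$ to genuine $r$-almost cycles, correcting by an element of $G^{p-1}$.

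The main obstacle will be the bookkeeping in the inductive step: maintaining the congruence modulo $G^{p-1}$ while simultaneously upgrading each $(r-1)$-almost cycle factor to an $r$-almost cycle requires a splitting of the filtration $\{F^p\}$ of $C$, which exists precisely because $k$ is a field. In spirit, this is a filtered version of the Künneth theorem; the exactness of the tensor product over $k$ is exactly what allows the filtration-compatible decomposition of elements of $Z^r_{p,q}$ into pure tensors of genuine $r$-almost cycles, modulo $G^{p-1}$.
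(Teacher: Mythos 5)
Your reverse inclusion is the same as the paper's: a tensor product of $r$-almost cycles is an $r$-almost cycle by a direct computation with $\delta$, and that part is fine. The forward inclusion is where your plan has a genuine gap, namely at the step ``the strengthened condition $\delta z\in G^{p-r}$ is then exploited \dots we adjust the $y_i,y_i'$ to genuine $r$-almost cycles, correcting by an element of $G^{p-1}$.'' Once you replace $z$ by the representative $\sum_i y_i\otimes y_i'$ furnished by the inductive hypothesis, you have changed $z$ by some $g\in G^{p-1}$, and all you know about $\delta g$ is that it lies in $G^{p-1}$; since $G^{p-r}\subset G^{p-1}$ for $r\geq 2$, the hypothesis $\delta z\in G^{p-r}$ tells you nothing about $\delta\bigl(\sum_i y_i\otimes y_i'\bigr)$ beyond membership in $G^{p-1}$. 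Likewise, passing to $G^p/G^{p-1}$ and invoking the K\"unneth theorem only sees the $d^0$-cycle condition, not the $r$-almost-cycle condition, so nothing in the sketch actually upgrades the factors; a splitting of the filtration (which does exist over $k$) does not repair this.

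The difficulty is not just bookkeeping: boundaries coming from different blocks $F^a\otimes F^b$ with $a+b=p$ can cancel, and then the desired adjustment modulo $G^{p-1}$ is impossible. Concretely, let $C$ have basis $\xi,x,y,w$ in degrees $1,2,5,6$ with $\partial x=\xi$, $\partial w=y$, $\partial\xi=\partial y=0$, filtered by $F^0=0$, $F^1=\langle\xi\rangle$, $F^2=\langle\xi,x,y\rangle$, $F^3=C$. Then $z=x\otimes y+\xi\otimes w$ has $\delta z=\xi\otimes y-\xi\otimes y=0$, so $z\in Z^r_{4,3}$ for every $r$; but in total degree $7$ one checks $G^{3}=0$, while $x\in Y^2_{a,2-a}$ forces $a\geq 3$ (leaving no room for $y\in F^{4-a}$) and $w\in Y^2_{b,6-b}$ forces $b\geq 4$ (leaving $a\leq 0$, where $Y^2_{a,*}=0$), so $z$ is not congruent modulo $G^{p-1}$ to any sum of tensors of $2$-almost cycles. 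Hence no correction by an element of $G^{p-1}$ can complete your inductive step; the excess is only absorbed after further dividing by almost-boundaries (here $z=\delta(x\otimes w)$ lies in $B^1_{4,3}$). The workable version of your idea is therefore to run the induction on $r$ at the level of the pages, proving $E^r(C\otimes C)\cong E^r(C)\otimes E^r(C)$ by applying the K\"unneth theorem to each page --- i.e.\ aim directly at Proposition \ref{tensor ID} --- rather than at the level of the submodules $Z^r$. This is also genuinely different from the paper's route, which proves Lemma \ref{identify Z} by a direct termwise analysis of a representative $\sum y^a_c\otimes z^b_d$ after discarding terms in $G^{p-1}$; note that the cross-block cancellation exhibited above is exactly the point such termwise bookkeeping must confront as well.
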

		\begin{proof}
			For the forward inclusion, let $x\in Z^r_{p,q}\subset G_{p+q}^p$. We may write $x=\sum y^a_{c,k}\otimes z^b_{d,k}$ for $y^a_{c,k}\in F^a_c$ and  $z^b_{d,k}\in F^b_{d}$.  The sum runs over $c+d=p+q$, $a+b=p$ and $k$.  The $k$'s here reflect the fact that the component of $x$ in any particular $F^a_{c}\otimes F^b_{d}$ may not be a simple tensor.  Since this will have no impact on the rest of the argument they will be omitted to clear up notation.  If any of the $y^a_{c}\in F^l_c$ for $l<a$, then $y^a_{c}\otimes z^b_{d}\in G^{p-1}$ and so $\delta(y^a_{c}\otimes z^b_{d})\in G^{p-1}$ is an element of the module on the right.  We can therefore ignore all such terms and similarly when $z^b_{d}\in F^l_d$ for $l<b$.
			\par
			Since $x\in Z^r_{p,q}$ we have $\delta(x)\in G^{p-r}_{p+q-1}$, or 
			
			\[
				\delta(x) = \sum (\partial y^a_{c}\otimes z^b_{d}) +(-1)^c (y^a_{c}\otimes \partial z^b_{d})\in \bigoplus\limits_{i+j=p+q-1}\sum\limits_{s+t=p-r} F^s_{i}\otimes F^t_{j}
			\]
			Since $z^b_d\notin F^l_j$ for $l<b$, we must have $\partial(y^a_c)\in F^{p-r-b}_{c-1}=F^{a-r}_{c-1}$.  Similarly $\partial(z^b_{d})\in F_{d-1}^{b-r}$.  By definition $y^a_c\in Y^r_{a,c-a}$ and $z^b_d\in Y^r_{b,d-b}$.
			\par
			For the reverse inclusion it suffices to show that
			\[
				Y^{r}_{a,c-a}\otimes Y^{r}_{b,d-b}\subset Z^r_{p,q}
			\]
			Given $x\otimes y \in Y^{r}_{a,c-a}\otimes Y^{r}_{b,d-b}$, we need to show that $\delta(x\otimes y)\in G^{p-r}_{p+q-1}$.  By definition we have:
			\[
				\delta(x\otimes y) = \partial(x)\otimes y + (-1)^c x\otimes\partial(y) 
			\]
			As $x\in Y^r_{a,c-a}$ we see $\partial(x)\in F^{a-r}_{c-1}$ and similarly $\partial(y)\in F^{b-r}_{d-1}$.  Thus $\partial(x)\otimes y$ and $x\otimes \partial(y)$ are elements of $G^{a+b-r}_{c+d-1} = G^{p-r}_{p+q-1}$ and we are done.
		\end{proof}
		
		\begin{proposition} \label{tensor ID}
			There is a spectral sequence isomorphism $\varphi\colon (E,e)\to (D\otimes D,d\otimes d)$.  That is, for all $p,q$, and $r$ there are isomorphisms
			\[
				\varphi^r_{p,q}:E^r_{p,q}   \to \bigoplus\limits_{c+d=q} \bigoplus\limits_{a+b=p} D^r_{a,c}\otimes D^r_{b,d}
			\]
			that satisfy
			\[
				\varphi^r\circ e^r = (d^r\otimes 1 + (-1)^{a+c}1\otimes d^r) \circ \varphi^r
			\]
		\end{proposition}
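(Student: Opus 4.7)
The plan is to construct $\varphi^r$ explicitly at each page by identifying the numerator and denominator of $E^r_{p,q} = (Z^r_{p,q}+G^{p-1})/(B^{r-1}_{p,q}+G^{p-1})$ as suitable tensor products built from the corresponding objects for $C$. The level-zero identification $G^p/G^{p-1} \cong \bigoplus_{a+b=p}(F^a/F^{a-1}) \otimes (F^b/F^{b-1})$---which uses exactness of $-\otimes_k-$ over the field $k$ essentially---provides the ambient module into which everything will be unpacked. Writing $\bar Y^r_{a,c}$ for the image of $Y^r_{a,c}$ in $F^a_{a+c}/F^{a-1}_{a+c}$, Lemma \ref{identify Z} (after reindexing) identifies the numerator of $E^r_{p,q}$ modulo $G^{p-1}$ as
\[
\bigoplus_{a+b=p,\,c+d=q}\bar Y^r_{a,c}\otimes \bar Y^r_{b,d}.
\]

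For the denominator, I would use Remark \ref{B is image of Z} to rewrite $B^{r-1}_{p,q}=\delta(Z^{r-1}_{p+r-1,q-r+2})$, expand via Lemma \ref{identify Z}, and apply the Leibniz formula for $\delta=\partial\otimes 1+(-1)^{\bullet}1\otimes\partial$. After projecting to $G^p/G^{p-1}$ and using the inclusions $\bar A^{r-1}\subset \bar Y^r$ (from Lemma \ref{inclusion lemma}), the denominator should identify with
\[
\bigoplus_{a+b=p,\,c+d=q}\bigl(\bar A^{r-1}_{a,c}\otimes \bar Y^r_{b,d}+\bar Y^r_{a,c}\otimes \bar A^{r-1}_{b,d}\bigr).
\]
Quotienting and recalling $D^r_{a,c}=\bar Y^r_{a,c}/\bar A^{r-1}_{a,c}$ delivers the desired isomorphism $\varphi^r_{p,q}\colon E^r_{p,q}\xrightarrow{\cong}\bigoplus D^r_{a,c}\otimes D^r_{b,d}$. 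The chain-map condition $\varphi^r\circ e^r=(d^r\otimes 1+(-1)^{a+c}1\otimes d^r)\circ \varphi^r$ then follows from a direct Leibniz computation, since both differentials are induced by $\delta$ acting on representatives.

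The main obstacle is verifying the denominator identification. Naively, $\partial y\otimes z$ with $y,z\in Y^{r-1}$ projects to $\bar A^{r-1}\otimes \bar Y^{r-1}$, which is strictly larger than $\bar A^{r-1}\otimes \bar Y^r$ in general. Since $B^{r-1}\subset Z^r$ forces $\bar B^{r-1}\subset \bar Z^r$, the apparent mismatch must cancel among the summands in a genuine expansion of an element of $Z^{r-1}$. Careful bookkeeping of the filtration drops, together with the field hypothesis enabling a clean direct-sum decomposition of $G^p/G^{p-1}$, resolves the issue. Once the denominator is in hand, coherence $\varphi^{r+1}=H(\varphi^r)$ under the identifications $E^{r+1}=H(E^r)$ and $(D\otimes D)^{r+1}=H((D\otimes D)^r)$ is automatic, so $\varphi=\{\varphi^r\}$ is an isomorphism of spectral sequences.
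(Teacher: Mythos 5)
Your overall strategy differs from the paper's, and the difference matters: you try to identify both the numerator \emph{and} the denominator of $E^r_{p,q}=(Z^r_{p,q}+G^{p-1})/(B^{r-1}_{p,q}+G^{p-1})$ at every page $r$, whereas the paper never attempts the denominator for $r>0$. The gap in your proposal is exactly at the step you flag and then wave off. The claimed equality of the image of $B^{r-1}_{p,q}=\delta(Z^{r-1}_{p+r-1,q-r+2})$ in $G^p/G^{p-1}$ with $\sum\bigl(\bar A^{r-1}_{a,c}\otimes\bar Y^r_{b,d}+\bar Y^r_{a,c}\otimes\bar A^{r-1}_{b,d}\bigr)$ is, given the numerator identification, logically equivalent to the proposition itself at page $r$: the forward inclusion is well-definedness of the induced map and the reverse inclusion is its injectivity. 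The term-by-term Leibniz expansion you sketch does not establish the forward inclusion, for two reasons you partly note: first, Lemma \ref{identify Z} only identifies $Z^{r-1}_{p+r-1,q-r+2}$ modulo $G^{p+r-2}$, so a representative carries an error term $g\in G^{p+r-2}$ whose boundary $\delta(g)$ need not lie in $G^{p-1}$ when $r>1$; second, the visible terms $\partial y\otimes z$ with $y,z\in Y^{r-1}$ only land in $\bar A^{r-1}\otimes\bar Y^{r-1}$, which is strictly larger than $\bar A^{r-1}\otimes\bar Y^{r}$ in general. Saying that ``careful bookkeeping of the filtration drops resolves the issue'' is not an argument; the required cancellation is precisely the nontrivial content, and making it honest essentially forces you either to choose bases of each $F^a_n$ compatible with all the subspaces $A^s\subset Y^s\subset F^a_n$ (a genuine splitting argument over $k$) or to prove a Künneth-type statement page by page --- i.e.\ the very thing being proved.

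The paper sidesteps this entirely. It defines $\varphi^r$ only on the numerator via Lemma \ref{identify Z}, checks (using the identification \eqref{DtensorD}) that the denominator is carried to zero and that $\varphi^r$ intertwines the differentials, and then appeals to Lemma \ref{spec_iso}: a morphism of spectral sequences is an isomorphism as soon as it is one on the zeroth page, where the check is the elementary computation $G^p/G^{p-1}\cong\sum_{a+b=p}F^a/F^{a-1}\otimes F^b/F^{b-1}$. The Künneth isomorphism is what guarantees $(D\otimes D,d\otimes d)$ is itself a spectral sequence, so the isomorphism propagates to all pages without ever touching $B^{r-1}$ for $r>1$. I recommend you either adopt that reduction (in which case your explicit formula for $\varphi^r$ is fine and only well-definedness and the chain-map identity need care), or, if you want the direct route, actually supply the filtration-compatible basis argument that makes your denominator identification a theorem rather than an assertion.
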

		\begin{proof}
			Using Lemma \ref{identify Z}, we can write 
			\[
				E^r_{p,q} = \frac{\bigoplus\limits_{c+d=p+q}\sum\limits_{a+b=p}Y^r_{a,c-a}\otimes Y^r_{b,d-b}+G^{p-1}}{\delta(Z^r_{p+r-1,q-r+2})+G^{p-1}} 
			\]
			We define $\varphi^r_{p,q}$ on generators of the numerator by $\varphi^r_{p,q}([x\otimes y]) \mapsto [x]\otimes [y]\in D^r_{a,c}\otimes D^r_{b,d}$ for all generators $x\otimes y \in Y^r_{a,c}\otimes Y^r_{b,d}$ (the generators in $G^{p-1}$ must go to zero).  This is a well defined function since the intersection of two of the subspaces in the sum is contained in $G^{p-1}$.  Writing $F=F^{p-1}_{p+q}$, we see this is a homomorphism by identifying $D^r_{a,c}\otimes D^r_{b,d}$ with
			\begin{equation} \label{DtensorD}
				\frac{Y^r_{a,c}+F}{A^{r-1}_{a,c}+F}\otimes \frac{Y^r_{b,d}+F}{A^{r-1}_{b,d}+F} \cong 
				\frac{(Y^r_{a,c}+F)\otimes (Y^r_{b,d}+F)} {(Y^r_{a,c}+F)\otimes(A^{r-1}_{b,d}+F^) + (A^{r-1}_{a,c}+F)\otimes (Y^r_{b,d}+F)}
			\end{equation}
			and seeing the quotiented part of $E^r_{p,q}$ is carried to 0.  The second claim follows from:
			\begin{align*}
				\varphi^r_{p,q}(\delta([x\otimes y]))  & = \varphi^r_{p,q}([\partial(x)\otimes y+(-1)^{a+c}x\otimes \partial(y)]) \\
				& = [\partial(x)]\otimes[y]+ (-1)^{a+c}[x]\otimes [\partial(y)]\\
				& = (d^r\otimes 1 + (-1)^{a+c}1\otimes d^r)([x]\otimes [y]) \\
				& = ((d^r\otimes 1 + (-1)^{a+c}1\otimes d^r)\circ \varphi^r_{p,q})([x\otimes y])
			\end{align*}

			All that remains is to show this map is an isomorphism. By  Lemma \ref{spec_iso}, it is sufficient to show $\varphi^0$ is an isomorphism of the zero-th pages. Chasing the definitions a bit, $Z^0_{p,q}=G^p$ and $B^{-1}_{p,q} = \delta(Z^{-1}_{p-1,q})\subset G^{p-1}$ and we just get $E^0_{p,q} = G^{p}_{p+q}/G^{p-1}_{p+q}$.  The map $\varphi^0$ is exactly the following:
			\begin{align*}
				G^p/G^{p-1} & = \sum\limits_{a+b=p} F^a\otimes F^b / \sum\limits_{c+d=p-1} F^c\otimes F^d \\
				& \cong \sum\limits_{a+b=p} (F^a\otimes F^b) / (F^a\otimes F^{b-1} + F^{a-1}\otimes F^b)\\
				& \cong \sum\limits_{a+b=p} F^a/F^{a-1} \otimes F^b/F^{b-1}
			\end{align*}
		\end{proof}
		We reinterpret Proposition \ref{tensor ID} in more categorical language. The reader unfamiliar with monoidal categories and strong monoidal functors should consult \cite[Section XI]{MacLane}.
		
		We may rewrite the isomorphism of Proposition \ref{tensor ID} as
		\begin{equation}\label{strong_monoidal}
			\textbf{E}(C\otimes C)\cong \textbf{E}(C)\otimes\textbf{E}(C)
		\end{equation}
		One can check that this identity is sufficiently natural, thus the functor $\textbf{E}\colon \FDGM_k\to \SSEQ_k$ is a strong monoidal functor of symmetric monoidal categories.  It follows formally that $\textbf{E}$ takes filtered differential graded coalgebras to spectral sequences of coalgebras as both are the correct notion of comonoids in their respective categories.  
		\begin{thm}\label{fdgcs gives spectral sequence of coalgebras}
			Suppose $C$ is filtered differential graded coalgebra.  Then $\textbf{E}(C)$ is a spectral sequence of coalgebras.  If the filtration of $C$ is a first quadrant filtration then $\textbf{E}(C)$ converges to $H_*(C)$ as a coalgebra.
		\end{thm}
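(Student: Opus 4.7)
The plan is to use the strong monoidal structure of $\textbf{E}\colon \FDGM_k\to \SSEQ_k$ established via Proposition \ref{tensor ID} to handle the first claim essentially formally, and then argue convergence as a coalgebra by tracing through the filtration-compatible identifications at the $\infty$-page. For the first claim, define the $r$-th page comultiplication on $E(C)$ as the composite
\[
\triangle^r \;:=\; \varphi^r \circ \textbf{E}(\triangle)^r \colon E^r(C) \to (E(C)\otimes E(C))^r.
\]
Since $\triangle$ is a morphism in $\FDGM_k$, functoriality of $\textbf{E}$ makes $\textbf{E}(\triangle)$ a morphism of spectral sequences, and in particular commutes with all differentials. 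By Proposition \ref{tensor ID}, the monoidal isomorphism $\varphi^r$ intertwines $e^r$ with the Koszul-signed tensor differential on $E^r(C)\otimes E^r(C)$, so composing these two facts yields the co-Leibniz rule. Coassociativity and the counit condition at each page follow from those of $\triangle$ on $C$ together with naturality of the identification \eqref{strong_monoidal}; this is the standard fact that a strong monoidal functor between symmetric monoidal categories preserves comonoids.

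For convergence as coalgebras, first observe that the filtration-preserving map $\triangle\colon C\to C\otimes C$ descends to a filtration-preserving map $H_*(\triangle)\colon H_*(C)\to H_*(C\otimes C)$. Since $k$ is a field, the K\"unneth map $H_*(C)\otimes H_*(C)\to H_*(C\otimes C)$ is an isomorphism of filtered graded vector spaces, so $H_*(C)$ becomes a filtered graded coalgebra and $\Gr(H_*(C))$ acquires a canonical bigraded coalgebra structure $\Gr(H_*(\triangle))$. By Definition \ref{convergence as coalgebras}, it remains to verify that the isomorphism \eqref{gr_iso} intertwines $\triangle^\infty$ with $\Gr(H_*(\triangle))$.

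This last identification is the main obstacle. To check it, represent a class in $E^\infty_{p,q}$ by a cycle $x\in F^p_{p+q}\cap \ker(\partial)$; the isomorphism \eqref{gr_iso} sends $[x]$ to the class of its homology class in $F^p H_{p+q}(C)/F^{p-1}H_{p+q}(C)$. Unwinding $\triangle^\infty[x]$ via the definition of $\varphi^\infty$ yields the class of $\triangle(x)\in G^p_{p+q}$ in the relevant quotient of $\infty$-cycles in $C\otimes C$, while $\Gr(H_*(\triangle))$ applied to the image of $[x]$ yields the class of $\triangle(x)$ regarded first as a cycle, then as a homology class, then modulo the next step of the induced filtration. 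Both paths quotient $\triangle(x)$ by the same submodule of $G^p_{p+q}$ once one uses the strong monoidal identification \eqref{strong_monoidal} at the $\infty$-page to match tensor products of quotients with quotients of tensor products; the compatibility then follows from a diagram chase.
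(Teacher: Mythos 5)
Your proposal is correct and follows essentially the same route as the paper: the coalgebra structure on each page is obtained formally from the strong monoidal identification $\textbf{E}(C\otimes C)\cong\textbf{E}(C)\otimes\textbf{E}(C)$ of Proposition \ref{tensor ID} (strong monoidal functors preserve comonoids), and convergence as a coalgebra follows because both $\triangle^\infty$ and the comultiplication on $H_*(C)$ (via the K\"unneth isomorphism) are induced by the same filtration-preserving map $\triangle$. Your final representative-level diagram chase simply spells out the compatibility that the paper asserts in one sentence.
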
 
		\begin{proof}
			All there is left to prove is that the convergence is convergence of coalgebras in the sense of Definition \ref{convergence as coalgebras}.  The homology of $C$ is endowed with a comultiplication via the composition:
			\[
				H_*(C)\xrightarrow{\triangle} H_*(C\otimes C)\cong H_*(C)\otimes H_*(C)
			\]
			where the second isomorphism is the Kunneth isomorphism.  The fact that comultiplication in $E^{\infty}$ and in $H_*(C)$ are both induced by $\triangle$ implies the necessary compatibility condition of Definition \ref{convergence as coalgebras}.
		\end{proof}
  	\section{The Eilenberg--Zilber Map} \label{section: EZ}
  	
	  	In this short section we set up notation for $(q,p)$ shuffles and the Eilenberg--Zilber theorem.  Of particular importance is the notion of a simple $(q,p)$ shuffle, defined below.  This special class of shuffles will feature prominently in the proof of the main theorem. We assume the reader is familiar with the language of simplicial sets, a good reference is \cite{Goerss-Jardine}.
	  	
	  	We denote the simplicial category by $\mathbf{\Delta}$.  For non-negative integers $n$, we will let $\underline{n}$ denote the ordered set $\{0,1,\dots, n\}$, considered as an object in $\mathbf{\Delta}$.

		\begin{defn}
			A \emph{$(q,p)$ shuffle} is a pair $(\mu,\sigma)$ of morphisms $\mu\colon\underline{p+q}\to \underline{q}$ and $\sigma\colon\underline{p+q}\to \underline{p}$ in $\mathbf{\Delta}$ such that:
			\begin{enumerate}
				\item Both $\sigma$ and $\mu$ are surjective.
				\item For all $j$ so that $\mu(j)=\mu(j+1)$ we have $\sigma(j+1)=\sigma(j)+1$ and vice versa.
			\end{enumerate}
		\end{defn}
		 To get a feel for the second condition, consider the following $(5,3)$ shuffle:
		\begin{equation}\label{shuffle 1}
		(\mu,\sigma)=([0,0,1,2,3,3,3,4,5],[0,1,1,1,1,2,3,3,3])
		\end{equation}
		The second condition is saying that at every pair of indices $(j,j+1)$ such that $\mu$ does not increase, that is $\mu(j)=\mu(j+1)$, we must have $\sigma$ increase.  The main point is that the places where $\mu$ and $\sigma$ are degenerate complement one another.  As a result, $\mu$ is determined by $\sigma$ and so it should come as no surprise later on that many of our results involving $(q,p)$ shuffles depend only on $\sigma$.
		\begin{remark}
			The name $(q,p)$ shuffles comes from the observation that a $(q,p)$ shuffle is equivalent to taking two ordered decks of cards, of size $q$ and $p$ respectively, and shuffling them together so that the order of each deck is not changed.  That is, if card $A$ is below card $B$ in the deck of size $q$ then card $A$ should still be below card $B$ after shuffling.  In this way one can view $(q,p)$ shuffles as permutations in the symmetric group on $p+q$ letters and they can be assigned a sign $\sgn(\mu,\sigma)=\pm 1$ according to the sign of the associated permutation. 
		\end{remark}
		
		\begin{defn} \label{simple shuffles}
			We call a $(q,p)$ shuffle $(\mu,\sigma)$ \emph{simple} if there are  $a,b,c\in \underline{p+q}$ so that:
			\begin{enumerate}
				\item $\sigma(a)=0$.
				\item $\sigma \circ [a, a+1,\dots,b]$ is non-degenerate.
				\item $\sigma(i)=\sigma(b)$ for all $b\leq i\leq c$.
				\item $\sigma\circ [c,c+1,\dots,p+q]$ is non-degenerate.
			\end{enumerate} 
		\end{defn}
		This definition is not standard.  The simple $(q,p)$ shuffles are essentially those for which $\sigma$ can be decomposed into four parts: two ``degenerate'' parts and two ``non-degenerate'' parts.  The shuffle of display \eqref{shuffle 1} is not simple.  An example of a simple $(4,5)$ shuffle is 
		\[
			(\mu,\sigma)=([0,1,1,1,1,2,3,4,4,4],[0,0,1,2,3,3,3,3,4,5])
		\] 
		where $a=1$, $b=4$ and $c=7$. 
		\par
		Our interest in $(q,p)$ shuffles stems from their appearance in the Eilenberg--Zilber map.  We will have need of an explicit formula for this map later, so we fix notation here.  Let $X$ and $Y$ be topological spaces, the Eilenberg--Zilber map 
		\[
			\textbf{EZ}\colon C_{*}(X)\otimes C_{*}(Y)\to C_{*}(X\times Y)
		\]
		is given on a generator $\alpha\otimes\beta\in C_{q}(X)\otimes C_{p}(Y)$ by
		\[
			\textbf{EZ}(\alpha\otimes \beta) = \sum\limits_{(\mu,\sigma)}\sgn(\mu,\sigma)(\alpha\circ \mu)\times (\beta\circ \sigma)
		\]
		where the sum is over all $(q,p)$ shuffles.  
		\begin{remark}
			Here we are abusing notation. The $(q,p)$ shuffles are morphisms in the simplex category but we are identifying them with their geometric realizations, $|\mu|\colon\Delta^{p+q}\to \Delta^q$ and $|\sigma|\colon\Delta^{p+q}\to \Delta^p$.  We will continue to do this for the rest of the paper.
		\end{remark}

	\section{The Homology Coalgebra}\label{section: homology coalgebra}
		Let $X$ be a topological space. It is well known that, with coefficients in a field $k$, the singular homology of $X$ admits a comultiplication.  There are several ways of seeing this fact, but the most direct is to define it by the composition:
		\begin{equation} \label{usual comult}
			H_{*}(X)\to H_{*}(X\times X)\cong H_{*}(X)\otimes H_{*}(X)
		\end{equation}
		where the first map is induced by the diagonal of $X$ and the second map is the Kunneth isomorphism. For our purposes however, this will not be enough.
		\par 
		An important restriction of the Kunneth isomorphism is that it only applies to homology with coefficients in a field.  There are more general statements of the Kunneth theorem, but all require at least that the coefficients have a ring structure.  We will have need later of homology with coefficients in groups that are not naturally rings.  Specifically, we will be interested in the $E^2$ page of the Serre spectral sequence of a fibration; the coefficient groups will be the homology groups of the fiber.  To get a comultiplication in this context, we will make use of the fact that our coefficient groups are themselves $k$-coalgebras.
		\par 
		We begin by constructing the usual comultiplication in homology using the Alexander--Whitney map.  After giving the details of the usual construction, we modify it slightly to accommodate coefficients in coalgebras over $k$. 
		\par 
		A generator of $C_n(X\times X)$ is given by the product of two generators $\alpha$ and $\beta$ of $C_n(X)$.  The Alexander--Whitney map $\mathbf{AW}\colon C(X\times X)\to C(X)\otimes C(X)$ is given on generators by 
		\[
			\mathbf{AW}(\alpha\times \beta) = \bigoplus\limits_{i=0}^n (\alpha\circ [0,1,\dots,j])\otimes (\beta\circ [j,\dots,n])
		\]
		where $[0,\dots,j]\colon|\Delta^j|\to |\Delta^n|$ is the geometric realization of the map in $\mathbf{\Delta}$.
		\par
		The coalgebra structure $\triangle\colon C(X)\to C(X)\otimes C(X)$ is the composition
		\[
			\begin{tikzcd}
			C(X)	\arrow[r] & C(X\times X)\arrow[r,"\mathbf{AW}"] & C(X)\otimes C(X),
			\end{tikzcd}
		\]
		where the first map is induced by the diagonal.  
		\begin{lem} \label{chain comult}
			The map $\triangle\colon C(X)\to C(X)\otimes C(X)$ defined above is coassociative. Furthermore, if $X$ is connected then  the augmentation map $\epsilon\colon C(X)\to k$ makes $(C,\triangle,\epsilon)$ into a coassociative, counital coalgebra.
		\end{lem}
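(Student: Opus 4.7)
The plan is a direct computation from the explicit formula for $\triangle$. First I would unravel the definition on a singular $n$-simplex $\sigma\colon|\Delta^n|\to X$: the diagonal sends $\sigma$ to $(\sigma,\sigma)\in C_n(X\times X)$, and then $\mathbf{AW}$ gives
\[
\triangle(\sigma) \;=\; \sum_{j=0}^{n} (\sigma\circ[0,\dots,j])\otimes(\sigma\circ[j,\dots,n]).
\]
Everything else is an unpacking of this formula, so the key is to keep the indexing of iterated sums straight.

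For coassociativity, I would compute $(\triangle\otimes 1)\triangle(\sigma)$ and $(1\otimes\triangle)\triangle(\sigma)$ separately and check both equal
\[
\sum_{0\leq i\leq j\leq n} (\sigma\circ[0,\dots,i])\otimes(\sigma\circ[i,\dots,j])\otimes(\sigma\circ[j,\dots,n]).
\]
In $(\triangle\otimes 1)\triangle(\sigma)$ one re-splits the first tensor factor $\sigma\circ[0,\dots,j]$ at some index $i\leq j$, using that $(\sigma\circ[0,\dots,j])\circ[0,\dots,i] = \sigma\circ[0,\dots,i]$ and similarly for $[i,\dots,j]$; this produces the displayed triple sum indexed by $i\leq j$. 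Symmetrically, $(1\otimes\triangle)\triangle(\sigma)$ re-splits the second factor $\sigma\circ[i,\dots,n]$ at some $j\geq i$ and yields the same triple sum indexed by $i\leq j$. Extending linearly to all chains finishes this half of the lemma.

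For the counit, I would take $\epsilon\colon C(X)\to k$ to be the standard augmentation, sending each $0$-simplex to $1$ and vanishing on chains of positive degree (here connectedness of $X$ is used to make the constant value $1$ the natural and unambiguous choice). Then in
\[
(\epsilon\otimes 1)\triangle(\sigma) \;=\; \sum_{j=0}^{n} \epsilon(\sigma\circ[0,\dots,j])\otimes(\sigma\circ[j,\dots,n])
\]
only the $j=0$ term contributes, giving $1\otimes\sigma$, which equals $\sigma$ under the canonical identification $k\otimes C(X)\cong C(X)$. Symmetrically, only the $j=n$ summand of $(1\otimes\epsilon)\triangle(\sigma)$ survives and gives $\sigma$.

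There is no real obstacle here; the computation is entirely combinatorial and sign-free. The one place to be careful is the bookkeeping in the coassociativity step, where one must check that reparametrizing by the middle index $j$ (resp.\ $i$) really does produce the same iterated sum over pairs $0\leq i\leq j\leq n$.
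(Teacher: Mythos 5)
Your proof is correct and follows essentially the same route as the paper: a direct index computation showing both iterated comultiplications equal the common triple sum over $0\leq i\leq j\leq n$ (the paper does the same reindexing, just written as a substitution $r=i+j$), plus the counit check that the paper dismisses as straightforward. No gaps; the only cosmetic difference is that you spell out the counit axiom explicitly, which is a fine addition.
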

		\begin{proof}
			We check coassociativity, the rest is straightforward.  To make the notation easier, let $d_i,\delta_i\colon|\Delta^{n-i}|\to |\Delta^{n}|$ be the maps:
			\begin{align*}
				\delta_i & = [0,1,\dots,i] \\
				d_i & = [i,i+1,\dots,n]
			\end{align*}
			
			Let $\alpha\in C_n(X)$ be a generator.  Then $\triangle(\alpha)$ is given by 
			\begin{equation}\label{temp 1}
				\bigoplus\limits_{i=0}^n \delta_{i}^*(\alpha)\otimes d_{n-i}^*(\alpha)
			\end{equation}
			Applying $\triangle\otimes 1$, we get
			\[
				\bigoplus\limits_{i=0}^n\bigoplus\limits_{j=0}^{n-i} \delta_{i+j}^*(\alpha)\otimes (d_{n-(i+j)}^*\circ \delta_i^*)(\alpha)\otimes d_{n-i}^*(\alpha)
			\]
			Substituting $r=i+j$, we get:
			\[
				\bigoplus\limits_{i=0}^n\bigoplus\limits_{r=i}^{n} \delta_{r}^*(\alpha)\otimes (d_{n-r}^*\circ \delta_i^*)(\alpha)\otimes d_{n-i}^*(\alpha)
			\]
			which is the same as
			\[
				\bigoplus\limits_{r=0}^n\bigoplus\limits_{i=0}^{r} \delta_{r}^*(\alpha)\otimes (d_{n-r}^*\circ \delta_i^*)(\alpha)\otimes d_{n-i}^*(\alpha)
			\]
			This is (up to index variable names) what we get when we apply $1\otimes\triangle$ to display \eqref{temp 1}.
		\end{proof}
		
		It is not hard to check that $\triangle$ commutes with the relevant differentials, so it descends to a map on homology and recovers homology comultiplication of composition \eqref{usual comult}:
		\[
				H(C(X))\xrightarrow{\triangle_*} H(C(X)\otimes C(X)) \xrightarrow{\cong} H_*(X)\otimes H_*(X)
		\] 
		where the isomorphism is the Kunneth isomorphism.  Taking coefficients in a coalgebra $\Gamma$, the construction carries through in essentially the same way.
		\begin{proposition}\label{new comult}
			Let $X$ be a space, and let $\Gamma$ be a $k$-coalgebra with comultiplication $\triangle_{\Gamma}:\Gamma\to \Gamma\otimes \Gamma$.  Then $H_*(X;\Gamma)$ admits the structure of a $k$-coalgebra.
		\end{proposition}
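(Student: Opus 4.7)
The plan is to mimic the construction of Lemma \ref{chain comult} at the chain level, but now twisting by the coalgebra structure on the coefficients. Regard $\Gamma$ as a chain complex concentrated in degree zero with trivial differential, so that $C_*(X;\Gamma)=C_*(X)\otimes\Gamma$. Define a chain-level comultiplication as the composite
\[
\widetilde{\triangle}\colon C_*(X)\otimes\Gamma\xrightarrow{\triangle\otimes\triangle_{\Gamma}}\bigl(C_*(X)\otimes C_*(X)\bigr)\otimes\bigl(\Gamma\otimes\Gamma\bigr)\xrightarrow{\tau}\bigl(C_*(X)\otimes\Gamma\bigr)\otimes\bigl(C_*(X)\otimes\Gamma\bigr),
\]
where $\triangle$ is the comultiplication of Lemma \ref{chain comult} and $\tau$ is the symmetry isomorphism of chain complexes that interchanges the middle two tensor factors. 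Since $\Gamma$ is concentrated in degree zero, no Koszul signs are picked up by $\tau$. For the counit, take $\widetilde{\epsilon}=\epsilon\otimes\epsilon_{\Gamma}\colon C_*(X)\otimes\Gamma\to k\otimes k\cong k$, using the augmentation of $C_*(X)$ and the counit of $\Gamma$.

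First I would check that $\widetilde{\triangle}$ is a chain map. Both $\triangle$ and $\triangle_{\Gamma}$ are chain maps (the latter because $\Gamma$ has trivial differential), and $\tau$ is part of the symmetric monoidal structure on chain complexes, so the composite respects the tensor product differential on $C_*(X;\Gamma)$. Next, I would verify coassociativity and counitality of $(\widetilde{\triangle},\widetilde{\epsilon})$. These follow formally from the coassociativity and counitality of $\triangle$ (established in Lemma \ref{chain comult}) together with the same properties for $\triangle_{\Gamma}$; this is exactly the standard observation that the tensor product of two coalgebras is again a coalgebra, once one unwinds the coherence of $\tau$.

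Finally, I would pass to homology and apply the K\"unneth isomorphism (legal because $k$ is a field) to obtain the desired comultiplication
\[
H_*(X;\Gamma)\xrightarrow{\widetilde{\triangle}_*}H_*\bigl((C_*(X)\otimes\Gamma)\otimes(C_*(X)\otimes\Gamma)\bigr)\xrightarrow{\cong}H_*(X;\Gamma)\otimes H_*(X;\Gamma),
\]
with counit induced by $\widetilde{\epsilon}$. Because the K\"unneth isomorphism is itself monoidal, the coassociativity and counitality established at the chain level descend to $H_*(X;\Gamma)$.

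The proof is not conceptually deep: the main obstacle is really just the bookkeeping in writing down $\tau$ and checking that the diagrammatic identities at the chain level are compatible with passing to homology via K\"unneth. Concentrating $\Gamma$ in degree zero eliminates all sign issues, so the only genuine content beyond Lemma \ref{chain comult} is the formal principle that tensoring a dg-coalgebra with a coalgebra of coefficients yields a dg-coalgebra whose homology, by K\"unneth over a field, inherits a coalgebra structure.
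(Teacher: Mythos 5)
Your construction is essentially the paper's: it also defines the chain-level comultiplication as $\triangle_X\otimes\triangle_{\Gamma}$ followed by the interchange of the middle tensor factors (its map $\widetilde{\otimes}$), notes it commutes with $\partial=\partial_{C_*(X)}\otimes 1$ because the Alexander--Whitney map is a chain map, and then descends to homology over the field $k$. The only divergence is cosmetic for this statement: the paper's interchange carries the Koszul sign $(-1)^{|b|\cdot|c|}$ coming from the internal grading of $\Gamma$ (which matters later when $\Gamma=H_*(F)$ is graded), whereas you place $\Gamma$ in homological degree zero and drop the sign; this does not affect the validity of the proposition itself.
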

		\begin{proof}
			By definition, $H_*(X;\Gamma)$ is the homology of the chain complex $C_*(X)\otimes \Gamma$ with boundary map $\partial = \partial_{C_*(X)}\otimes 1$.  This admits a comultiplication:
			\[
					C(X)\otimes \Gamma \xrightarrow{\triangle_X\otimes \triangle_{\Gamma}}
					C(X)\otimes C(X)\otimes \Gamma\otimes \Gamma 
					\xrightarrow{\widetilde{\otimes}}  
					(C(X)\otimes \Gamma) \otimes (C(X)\otimes \Gamma)
			\]
			where $\triangle_X$ is the map of Lemma \ref{chain comult} and $\widetilde{\otimes}$ is given by
			\begin{equation}
				\label{twisty_tensor}
				(a\otimes b)\widetilde{\otimes}(c\otimes d) = (-1)^{|b|\cdot|c|}(a\otimes c)\otimes(b\otimes d) 
			\end{equation}
			
			Because the Alexander--Whitney map commutes with $\partial_{C_*(X)}$, it is immediate that this commutes with $\partial$ and so descends to a comultiplication on homology.  This comultiplication can be identified using the universal coefficient theorem as
			\[
				H_*(X;k)\otimes \Gamma \xrightarrow{\triangle_X\otimes \triangle_{\Gamma}}
				H_*(X;k)\otimes H_*(X;k)\otimes \Gamma\otimes \Gamma 
				\xrightarrow{\widetilde{\otimes}}  
				H_*(X;k)\otimes \Gamma \otimes H_*(X;k)\otimes \Gamma
			\]
		\end{proof}
		Going forward we will abbreviate this comultiplication by $\triangle_X\twisty\triangle_{\Gamma}$.
		
		\begin{notn}\label{tensor notations}
			It becomes clear in the proof of the above proposition that we will need more notation for tensor products.  In particular, if we have two coalgebras $H$ and $\Gamma$ with comultiplications $\triangle_{H}$ and $\triangle_{\Gamma}$ there are really three different kinds of tensor products at play. 
			\begin{enumerate}
				\item The tensor product of elements in $H$ with elements in $\Gamma$ is denoted by $\square$.  That is, if $x\in H$ and $y\in \Gamma$, the associated simple tensor in $H\otimes \Gamma$ is denoted $x\squarebin y$.
				\item The tensor product of elements in $H$ with elements in $H$ is denoted by $\wedge$.  That is, if $x,y\in H$, the associated simple tensor in $H\otimes H$ is denoted $x\wedge y$.  The $\wedge$ is also used for elements of $\Gamma\otimes \Gamma$.
				\item The tensor product of elements in $H\otimes \Gamma$ with elements in $H\otimes \Gamma$ is denoted by $\otimes$.  That is, if $x\squarebin y,w\squarebin z\in H\otimes \Gamma$, the associated simple tensor in $(H\otimes \Gamma)\otimes (H\otimes \Gamma)$ is denoted $(x\squarebin y)\otimes(w\squarebin z)$.
			\end{enumerate}
		\end{notn}
		For example, with these notations we rewrite the formula \eqref{twisty_tensor} for $\widetilde{\otimes}$ as:
		\begin{equation}\label{twisty_2}
			(a\wedge b)\twisty(c\wedge d) = (-1)^{|b|\cdot|c|}(a\squarebin c)\otimes (b\squarebin d)
		\end{equation}

	\section{The Serre Spectral Sequence}\label{section: SSS}
		Recall that a map of spaces $E\to B$ is called a Serre fibration if for all CW complexes $X$ and homotopies $h\colon X\times I\to B$ such that $h_0\colon X\times \{0\}\to B$ factors through $E$, there is a lift $\overline{h}\colon X\times I\to E$.  Given a fibration $f\colon E\to B$, and a point $b\in B$, the fiber over the point $b$ is the space $F_b=f^{-1}(b)$.  When the space $B$ is path connected, all the fibers over the various points are homotopy equivalent and we will simply refer to the fiber $F$.  In his thesis \cite{Serre3}, Serre proved the following:
		\begin{thm} [Serre]\label{SSS}
			Let $G$ be an abelian group.  Given a fibration $F\to E\to B$, with $B$ a simply connected CW complex, there is a spectral sequence $\{E^r,d^r\}$ converging to $H_*(E;G)$, with the second page given by $E^2_{p,q}\cong H_p(B;H_q(F;G)).$
		\end{thm}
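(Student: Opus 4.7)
The plan is to realize the spectral sequence as the one associated, via the functor $\mathbf{E}\colon \FDGM_k \to \SSEQ_k$ of Section~\ref{section: fdgms}, to the singular chain complex $C_*(E;G)$ equipped with the filtration induced by the skeleta of $B$. Specifically, I would use the CW structure on $B$ to define
\[
F^p C_*(E;G) = C_*\bigl(p^{-1}(B^{(p)});G\bigr),
\]
where $B^{(p)}$ is the $p$-skeleton of $B$. This is clearly a first quadrant filtration preserved by the boundary, so Proposition~\ref{functor} and the convergence result at the end of Section~\ref{section: fdgms} immediately produce a first quadrant spectral sequence converging to $H_*(E;G)$. The content of the theorem is thus isolated in identifying $E^2$.

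For the identification of the pages, I would first compute
\[
E^1_{p,q} \cong H_{p+q}\bigl(p^{-1}(B^{(p)}),\, p^{-1}(B^{(p-1)});\,G\bigr)
\]
from the definition of $E^0$ and $E^1$ in terms of the filtration quotients and their homology (as in Section~\ref{section: fdgms}). Then, using that attaching a $p$-cell along $\varphi\colon S^{p-1}\to B^{(p-1)}$ produces, via the homotopy lifting property, a trivialization of the pulled-back fibration over $D^p$ (because $D^p$ is contractible), excision together with the Künneth theorem identifies the relative homology of the pair associated to each cell with $H_q(F;G)$. Summing over $p$-cells and appealing to the simple connectivity of $B$ to rule out monodromy gives
\[
E^1_{p,q} \cong C_p^{CW}(B)\otimes H_q(F;G) \cong C_p^{CW}\bigl(B;H_q(F;G)\bigr).
\]

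The next step is to verify that the differential $d^1$ on this identification agrees with the cellular boundary on $B$ with coefficients in $H_q(F;G)$. This is the part I expect to require the most care: one has to trace $d^1$ through the various trivializations over $p$-cells and check that the connecting homomorphism in the relevant long exact sequence of the triple matches the degree maps used to define the cellular boundary. Simple connectivity of $B$ ensures the coefficient system $H_q(F;G)$ is untwisted so that this comparison is well defined. Once this is done, taking homology in the $p$-direction yields
\[
E^2_{p,q} \cong H_p\bigl(B;H_q(F;G)\bigr),
\]
completing the proof. Convergence to $H_*(E;G)$ is automatic from the first quadrant filtration via the proposition at the end of Section~\ref{section: fdgms}.
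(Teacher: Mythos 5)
Your outline is correct in its essentials, but it is not the route this paper takes: you give the cellular proof, filtering $C_*(E;G)$ by the preimages of the skeleta of $B$, whereas the paper follows Serre's thesis (with the technical details delegated to McCleary) and filters the singular chains of $E$ by the degeneracy of $\pi\circ\alpha$ as in Definition \ref{i-degenerate}, identifying $E^2$ through the explicit chain-level maps $\phi$ and $\psi$ of Section \ref{section: SSS}. The two filtrations yield isomorphic spectral sequences. Your route buys a more geometric identification of $E^1$ with the cellular chains of $B$ with coefficients in $H_q(F;G)$, via the fiber-homotopy trivializations over cells; the comparison of $d^1$ with the cellular boundary, which you rightly flag, is the genuinely delicate step, and simple connectivity disposes of the local coefficient issue as you say. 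What the Serre degeneracy filtration buys, and why the paper could not substitute yours, is chain-level compatibility with the Alexander--Whitney diagonal: Lemma \ref{coproduct_respects_filtration} shows $\triangle(F^p)\subset G^p$ for the degeneracy filtration, while for the skeletal filtration a chain in $F^p$ only obviously has diagonal in $F^p\otimes F^p\subset G^{2p}$, so the coalgebra structure that is the point of this paper would not come along for free. One small inaccuracy to repair in your write-up: the skeletal filtration is not a first quadrant filtration in the sense of Definition \ref{first quadrant filtration}, since $F^n\cap C_n(E;G)=C_n\bigl(p^{-1}(B^{(n)});G\bigr)$ need not be all of $C_n(E;G)$, so you cannot literally invoke the convergence proposition at the end of Section \ref{section: fdgms}; convergence instead follows because the filtration is bounded below and exhaustive (every singular simplex of $E$ maps to a compact subset of $B$, hence into some finite skeleton) and because the spectral sequence is first quadrant from $E^1$ onward.
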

		The goal of this section is to show that when $G=k$ is a field the homological Serre spectral sequence admits a comultiplication.  In fact, we will do a bit more and show that this comultiplication can be identified on the $E^2$ page of the spectral sequence with the comultiplication of Proposition \ref{new comult}.  
		\par
		Toward this, let $\pi\colon E\to B$ be a fibration with $B$ a simply connected CW complex and fiber $F$. Let $k$ be a field that we fix for the remainder of the section. We consider the graded vector space $C=C_{*}(E)$ of singular $k$-chains in $E$ with boundary map $\partial$.  Following Serre \cite[Section II $n^{\circ}4$]{Serre3}, we define a filtration on $C$.  We write $\Delta^n$ for the geometric $n$ simplex.
		\begin{defn} \label{i-degenerate}
			Let $\alpha \colon \Delta^n\to E$.  We say that $\alpha$ is \emph{$i$-degenerate} with respect to $\pi$ if $\pi\circ \alpha = \alpha'\circ s$ for some codegeneracy map $s\colon \underline{n}\to \underline{n-i}$ and some $\alpha'$.  That is, $\alpha$ is $i$-degenerate if the following diagram commutes.
			\[
			\begin{tikzcd}
				\Delta^n \arrow[r,"\alpha"] \arrow[d,"s"']	&	E \arrow[d,"\pi"]	\\
				\Delta^{n-i}	\arrow[r,"\alpha'"]	&	B
			\end{tikzcd}
			\]
		\end{defn}
		For $0\leq q\leq n$, let $T^{p,q}\subset C_{p+q}$ be the subspace generated by the simplices that are $q$-degenerate with respect to $\pi$.  For $q\leq 0$ define $T^{p,q}=T^{p,0}$ and for $g> n$ let $T^{p,q}=0$. The Serre filtration of $C$ is given by
		\[
			F^p = \bigoplus\limits_{q}T^{p,q}
		\]
		With this indexing we have:
		\[
			T^{p,q}=F^p\cap C_{p+q} = F^{p}_{p+q}
		\]
		From the definition we have $T^{p,q}\subset T^{p+1,q-1}$, so we get for free that $F^p\subset F^{p+1}$.  Furthermore, one can check this is a first quadrant filtration in the sense of Definition \ref{first quadrant filtration}.  One confirms directly from the definitions that 
		\[
			\partial(T^{p,q})\subset T^{p-1,q}+T^{p,q-1}\subset T^{p,q-1}
		\]
		so we have $\partial(F^p)\subset F^p$ and so $(C,\partial,\{F^p\})$ is an object in $\FDGM_k$.  The associated spectral sequence $\textbf{E}(C)$ is the homological Serre spectral sequence.
		 
		\begin{lem} \label{coproduct_respects_filtration}
			The coproduct $\triangle\colon C\to C\otimes C$ defined in Lemma \ref{chain comult} is a map of filtered complexes where the filtration on $C\otimes C$ is induced by the Serre filtration. 
		\end{lem}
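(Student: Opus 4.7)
The plan is to chase through the definitions directly on a generator. Let $\alpha\colon\Delta^n\to E$ generate $F^p_n = T^{p,q}$ with $q=n-p$, so that by definition $\pi\circ\alpha = \alpha'\circ s$ for some codegeneracy $s\colon\underline{n}\to\underline{p}$ and some $\alpha'\colon\Delta^p\to B$. Applying the Alexander--Whitney formula of Lemma \ref{chain comult} to the diagonal of $\alpha$, we obtain
\[
\triangle(\alpha) \;=\; \sum_{j=0}^{n} \bigl(\alpha\circ[0,\dots,j]\bigr)\otimes \bigl(\alpha\circ[j,\dots,n]\bigr),
\]
so it suffices to verify that each summand lies in $G^p = \sum_{a+b=p} F^a\otimes F^b$.

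The main step is to read off the filtration level of each half from $s$. Because $s$ is an order-preserving surjection and changes by $0$ or $1$ at each step, its restriction to $[0,\dots,j]$ has image precisely the initial segment $\{0,1,\dots,s(j)\}$, and its restriction to $[j,\dots,n]$ has image $\{s(j), s(j)+1,\dots,p\}$. Composing with the appropriate inclusions, one sees that $s\circ[0,\dots,j]$ factors as a codegeneracy $\underline{j}\twoheadrightarrow\underline{s(j)}$ followed by an inclusion, and similarly $s\circ[j,\dots,n]$ factors as a codegeneracy $\underline{n-j}\twoheadrightarrow\underline{p-s(j)}$ followed by an inclusion. (Here $n-j\geq p-s(j)$ because $s(j)\geq j-q$, which follows from $s$ having exactly $q$ flat steps.) Therefore $\alpha\circ[0,\dots,j]$ is $(j-s(j))$-degenerate with respect to $\pi$, hence lives in $T^{s(j),\,j-s(j)} = F^{s(j)}_j$, while $\alpha\circ[j,\dots,n]$ is $(q-j+s(j))$-degenerate, hence lives in $T^{p-s(j),\,q-j+s(j)} = F^{p-s(j)}_{n-j}$.

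Since $s(j) + (p-s(j)) = p$, each summand of $\triangle(\alpha)$ sits in $F^{s(j)}\otimes F^{p-s(j)} \subset G^p$, and summing gives $\triangle(\alpha)\in G^p$. Compatibility with the boundary was already established when $\triangle$ was constructed in Lemma \ref{chain comult}, so this completes the verification that $\triangle$ is a morphism in $\FDGM_k$.

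The only genuine obstacle is the bookkeeping in the middle step: one must be careful that the codegeneracy witnessing $q$-degeneracy of $\alpha$ really does restrict to codegeneracies on both front and back faces, rather than only to an arbitrary order-preserving map. The inequality $s(j)\geq j-q$, which ensures the target dimensions are nonnegative and that the restrictions are genuine codegeneracies, is the combinatorial crux and is forced by $s$ being surjective.
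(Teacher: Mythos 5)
Your argument is correct and follows essentially the same route as the paper: both proofs apply the Alexander--Whitney formula to a generator $\alpha$ with $\pi\circ\alpha=\alpha'\circ s$, factor the restrictions $s\circ[0,\dots,j]$ and $s\circ[j,\dots,n]$ as a codegeneracy followed by a coface, and conclude that the two halves land in $T^{s(j),\,j-s(j)}$ and $T^{p-s(j),\,q-j+s(j)}$, hence the summand lies in $G^p$. The only cosmetic difference is that the paper first reduces to $\alpha\notin T^{p-1,q+1}$, which your version shows is unnecessary.
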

		\begin{proof}
			Let $\alpha\in T^{p,q}\setminus T^{p-1,q+1}$ be a generator of $F^p$. In the notation introduced in Section \ref{section: fdgcs}, we show that $\triangle(\alpha)\in G^p$.  By definition, we have
			\[
				\triangle(\alpha) = \bigoplus\limits_{i=0}^{p+q} (\alpha\circ [0,\dots,i])\otimes (\alpha\circ [i,\dots,p+q])
			\]
			so we just need to check the degeneracy of $\pi \circ \alpha\circ [0,\dots,i]$ and $\pi \circ \alpha\circ [i,\dots,p+q]$.
			
			Because $\alpha\in T^{p,q}$ there is a codegeneracy $s\colon \underline{p+q}\to \underline{p}$ so that $\pi\circ \alpha = \alpha'\circ s$. Since we assumed $\alpha\notin T^{p-1,q+1}$ we have $\alpha'$ is not degenerate, so we just need to study the degeneracy of $s\circ [0,\dots,i]$ and $s\circ [i,\dots,p+q]$.  This can be done in the simplex category.
			 
			Because $s$ is a codegeneracy it is surjective so $\#\Ima(s\circ [0,\dots,i]) = s(i)+1$.  Similarly, $\Ima(s\circ [i,\dots,p+q]) = s(p+q)-s(i)+1 = p-s(i)+1$.  We can factor the two maps of interest as $d^i_1\circ s^i_1$ and $d^i_2\circ s^i_2$ where $s^i_1\colon \underline{i}\to \underline{s(i)}$ and $s^i_2\colon \underline{p+q-i}\to \underline{p-s(i)}$ are codegeneracies.  Thus we can rewrite 
			\begin{align*}
				\pi\circ \alpha\circ [0,\dots,i] & = \alpha'\circ d_1^i\circ s_1^i \\
				\pi\circ \alpha\circ [i,\dots,p+q] & = \alpha'\circ d_2^i \circ s_2^i,
			\end{align*}
			so $\alpha\circ [0,\dots,i]\in T^{s(i),i-s(i)}$ and $\alpha\circ [i,\dots,p+q] \in T^{p-s(i),q-i+s(i)}$ and $\triangle(\alpha)\in G^p$. 
		\end{proof}
		Applying Theorem \ref{fdgcs gives spectral sequence of coalgebras} now gives
		\begin{cor} \label{has-spec}
			The homological Serre spectral sequence associated to a fibration $F\to E\to B$ is a spectral sequence of coalgebras that converges to $H_*(E)$ as a coalgebra.
		\end{cor}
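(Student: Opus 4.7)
The plan is to obtain the corollary as an immediate application of Theorem \ref{fdgcs gives spectral sequence of coalgebras}: all the substantive work has already been carried out above, and the proof is a matter of checking that $C_*(E)$ equipped with the Serre filtration qualifies as a first-quadrant filtered differential graded coalgebra over $k$.

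The key steps are as follows. First, the discussion immediately before Lemma \ref{coproduct_respects_filtration} has already established that $(C_*(E),\partial,\{F^p\})$ is an object of $\FDGM_k$; in particular $\partial$ preserves the Serre filtration via the degeneracy-tracking argument in the simplex category. Next, Lemma \ref{chain comult} supplies a coassociative, counital chain map $\triangle\colon C_*(E)\to C_*(E)\otimes C_*(E)$, and Lemma \ref{coproduct_respects_filtration} upgrades $\triangle$ to a morphism in $\FDGM_k$ with respect to the induced filtration $\{G^p\}$ on the tensor product. Together with the augmentation, this exhibits $C_*(E)$ as a filtered differential graded coalgebra in the sense of Section \ref{section: fdgcs}. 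Finally, the Serre filtration is a first quadrant filtration in the sense of Definition \ref{first quadrant filtration}: $T^{p,q}$ vanishes when $p<0$ (a codegeneracy cannot land on a negative-dimensional simplex of $B$) and every singular simplex of $C_n$ is trivially $0$-degenerate, giving the stabilization $F^n_n=C_n$.

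With these hypotheses in place, Theorem \ref{fdgcs gives spectral sequence of coalgebras} delivers everything at once: $\textbf{E}(C_*(E))$, which by construction is the homological Serre spectral sequence of Theorem \ref{SSS}, is a spectral sequence of coalgebras converging as a coalgebra to $H_*(C_*(E))=H_*(E;k)$. There is no genuine obstacle at this stage; the work has been absorbed into the strong-monoidal identification of Proposition \ref{tensor ID} and the filtration-compatibility of $\triangle$. The one sanity check worth making explicit is that the coalgebra structure on $H_*(E;k)$ produced by Theorem \ref{fdgcs gives spectral sequence of coalgebras} coincides with the standard one of Section \ref{section: homology coalgebra}, but both descend from the same chain-level map $\triangle$ through the Kunneth isomorphism, so this is automatic.
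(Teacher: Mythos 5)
Your proposal is correct and matches the paper's argument: the paper likewise verifies that the Serre filtration makes $C_*(E)$ a first-quadrant filtered differential graded module, proves in Lemma \ref{coproduct_respects_filtration} that $\triangle$ respects the induced filtration, and then deduces the corollary directly from Theorem \ref{fdgcs gives spectral sequence of coalgebras}. The extra sanity checks you include (first-quadrant property, identification of the limit coalgebra) are consistent with, and subsumed by, the paper's setup.
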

		We now describe how to identify the $E^2$ page of the spectral sequence, deferring some of the details to McCleary's book \cite{Mc}.
		 
		The construction begins with a slightly modified version of the singular chain complex with coefficients.  Let $*\in \Delta^n$ be the basepoint we get from the coface map $[0]\colon \Delta^0\to \Delta^n$.  Define $C_p(B;C_q(F))$ to be the subspace of $C_q(E)\otimes C_p(B)$  generated by simple tensors $U\otimes V$ with $V\colon \Delta^p\to B$ and $U\colon \Delta^q\to E$ such that the image of $U$ is contained in $F_{V(*)}$.  Here we take $C_p(B)$ and $C_q(E)$ to be the normalized chain groups, obtained from the usual singular chain groups by quotienting out the degenerate simplices.  We make $C_p(B;C_q(F))$ a chain complex with the boundary map $\partial = \partial_E\otimes 1$ where $\partial_E$ is the boundary map of $C_{*}(E)$.
		
		The following lemma follows from Propositions 5.20 and 5.22 of \cite{Mc}. 
		\begin{lem} \label{local system}
			The chain group $C_*(B;C_*(F))$ is isomorphic to $C_*(B;H_*(F))$, the usual singular complex of $B$ with coefficients in the coalgebra $H_*(F)$.  Here $H_*(F)$ is the homology of a single fixed fiber $F$ of $\pi\colon E\to B$.
		\end{lem}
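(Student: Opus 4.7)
The plan is to exploit two ingredients: the fact that the differential on $C_*(B; C_*(F))$ acts only in the fiber direction, and the simple-connectivity of $B$, which lets one canonically identify the homologies of different fibers.

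First I would observe that $C_p(B; C_q(F))$ decomposes as a direct sum
\[
C_p(B; C_q(F)) \cong \bigoplus_{V} C_q(F_{V(*)}),
\]
where $V$ ranges over (normalized) singular $p$-simplices of $B$, and the summand indexed by $V$ consists of all simple tensors $U \otimes V$ with $U$ a $q$-simplex in $F_{V(*)}$. The boundary $\partial_E \otimes 1$ restricts on each summand to the ordinary singular boundary of $C_*(F_{V(*)})$, so taking homology in the $q$-direction yields $\bigoplus_V H_q(F_{V(*)})$ in bidegree $(p,q)$.

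Next, since $\pi\colon E \to B$ is a Serre fibration and $B$ is simply connected, path-lifting produces, for each $V$, a homotopy equivalence $F_{V(*)} \to F$ (to a fixed chosen fiber) which is unique up to homotopy because $\pi_1(B) = 0$. The induced isomorphisms $H_q(F_{V(*)}) \cong H_q(F)$ assemble into a natural identification of the above vertical homology with $C_p(B) \otimes H_q(F)$, which is $C_p(B; H_q(F))$ once the local coefficient system is collapsed by simple-connectivity to the constant system with value the coalgebra $H_q(F)$.

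The main obstacle will be the compatibility with the horizontal (base) structure: when one passes from $V$ to a face $d_i V$, the basepoint $V(*)$ may change, and one must check that the fiber-identification isomorphisms commute with the maps $H_q(F_{V(*)}) \to H_q(F_{d_i V(*)})$ induced by path-lifting along $V$ itself. This is precisely where the hypothesis $\pi_1(B) = 0$ is needed, and is the content of the local coefficient system construction in Propositions 5.20 and 5.22 of \cite{Mc}.
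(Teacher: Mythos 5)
Your proposal is correct and takes essentially the same route as the paper, which offers no independent argument and simply defers to Propositions 5.20 and 5.22 of \cite{Mc}: those propositions carry out exactly the steps you outline, namely the decomposition of $C_p(B;C_q(F))$ into fiberwise chain complexes indexed by simplices $V$ of $B$, passage to fiberwise homology, and the path-lifting identification of the resulting local system with the constant system $H_*(F)$, with $\pi_1(B)=0$ making the identification canonical. Your deferral of the face-map compatibility to the same two propositions is precisely what the paper does.
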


		We define a map $\phi^{p,q}\colon T^{p,q}\to C_p(B;C_q(F))$ on generators $\alpha$ by
		\[
			\phi^{p,q}(\alpha) = (\alpha\circ [0,\dots,q])\otimes (\pi\circ \alpha\circ [q,\dots,p+q])
		\]
		\begin{proposition}
			The map $\phi^{p,q}\colon T^{p,q}\to C_p(B;C_q(F))$ is well defined.  Furthermore, $\phi^{p,q}(T^{p-1,q+1})=0$, so this map descends to a map $\phi^{p,q}:E^0_{p,q}\to C_p(B;C_q(F))$
		\end{proposition}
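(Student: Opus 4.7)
The plan is to verify both claims—that $\phi^{p,q}$ takes values in the subspace $C_p(B;C_q(F))$ and that it vanishes on $T^{p-1,q+1}$—by working with the canonical factorization of $\pi\circ\alpha$ as a degeneracy of a non-degenerate simplex. In any simplicial set every simplex admits a unique representation as a codegeneracy applied to a non-degenerate simplex. So for $\alpha\in T^{p,q}$ write this as $\pi\circ\alpha = \tilde\alpha\circ\tilde s$ with $\tilde\alpha\colon\Delta^m\to B$ non-degenerate and $\tilde s\colon\underline{p+q}\twoheadrightarrow\underline m$ a codegeneracy; since $\alpha$ is $q$-degenerate we must have $m\leq p$.

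For well-definedness, put $U=\alpha\circ[0,\dots,q]$ and $V=\pi\circ\alpha\circ[q,\dots,p+q]$; we need $U(\Delta^q)\subset F_{V(*)}$. Writing $V=\tilde\alpha\circ(\tilde s\circ[q,\dots,p+q])$ and factoring $\tilde s\circ[q,\dots,p+q]\colon\underline p\to\underline m$ uniquely into a codegeneracy followed by a face, a quick case analysis shows that $V$ is non-degenerate as a $p$-simplex of $B$ only when $m=p$ and $\tilde s\circ[q,\dots,p+q]=\mathrm{id}$; otherwise $V$ factors through a simplex of dimension strictly less than $p$ and is therefore degenerate. In the degenerate case $V=0$ in the normalized chain group $C_p(B)$ and so $\phi^{p,q}(\alpha)=0\in C_p(B;C_q(F))$. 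In the non-degenerate case $\tilde s(q)=0$, and because $\tilde s$ is order-preserving this forces $\tilde s(0)=\dots=\tilde s(q)=0$; consequently $\pi\circ\alpha\circ[0,\dots,q]=\tilde\alpha\circ\tilde s\circ[0,\dots,q]$ is the constant map at $\tilde\alpha(0)=\pi(\alpha(q))=V(*)$. Hence $U$ factors through the fiber $F_{V(*)}$ and $U\otimes V\in C_p(B;C_q(F))$ as required.

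For the vanishing on $T^{p-1,q+1}$, if $\alpha$ is $(q+1)$-degenerate with respect to $\pi$ then $\pi\circ\alpha=\alpha'\circ s'$ with $\alpha'\colon\Delta^{p-1}\to B$, so $V=\alpha'\circ s'\circ[q,\dots,p+q]$ factors through $\Delta^{p-1}$ and is a degenerate $p$-simplex of $B$; again $V=0$ in the normalized complex, so $\phi^{p,q}(\alpha)=0$. Combining the two claims gives the desired descent to the quotient $E^0_{p,q}=T^{p,q}/T^{p-1,q+1}$.

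The main obstacle is the case analysis in the first part: one has to see that the only way for $V$ to survive in the normalized complex is for the canonical codegeneracy $\tilde s$ to restrict to the identity on $\{q,q+1,\dots,p+q\}$, which then pins down its behavior on $\{0,1,\dots,q\}$ and places $U$ inside a single fiber. The rest is formal bookkeeping in the simplex category coupled with the vanishing of degenerate simplices in normalized chains.
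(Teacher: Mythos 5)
Your proof is correct and takes essentially the same route as the paper: both arguments come down to the combinatorial observation that for a monotone surjection $s\colon\underline{p+q}\to\underline{p}$ the restriction $s\circ[q,\dots,p+q]$ is degenerate unless $s=[0,\dots,0,1,\dots,p]$, in which case $s\circ[0,\dots,q]$ is constant and the fiber condition for $U\otimes V$ follows, with degenerate simplices dying in the normalized chains. The only (harmless) difference is that you first pass to the unique non-degenerate core $\tilde\alpha$ of $\pi\circ\alpha$, which lets you dispatch the vanishing on $T^{p-1,q+1}$ by a direct dimension count, whereas the paper works with the given factorization and absorbs the case of a degenerate $\alpha'$ into membership in $T^{p-1,q+1}$.
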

		\begin{proof}
		Let $\alpha \in T^{p,q}$.  By definition there is some $\alpha'\colon \Delta^{p}\to B$ so that $\pi\circ \alpha = \alpha'\circ s$ for some codegeneracy map $s\colon \Delta^{p+q}\to \Delta^p$.  Since $s$ is the realization of codegeneracy map, we can write $s=[s_0,s_1,\dots,s_{p+q}]$ for $0\leq s_i\leq p$.  
		
	 	We prove the second claim first.  Note that $\phi^{p,q}(\alpha)=0$ if 
	 	\[
	 		\pi\circ \alpha\circ [q,\dots,p+q] = \alpha'\circ s\circ [q,\dots,p+q] = \alpha' \circ [s_q,\dots,s_{p+q}]
	 	\]
	 	 is degenerate.   We see that $[s_q,\dots,s_{q+p}]$ is degenerate unless $s=[0,0,\dots,0,1,2,\dots,p]$.  When $s=[0,0,\dots,0,1,2,\dots,p]$, the map $\alpha' \circ [s_q,\dots,s_{p+q}]$ is degenerate if and only if $\alpha'$ is degenerate.  Since $\alpha'$ being degenerate implies $\alpha\in T^{p-1,q+1}$, we have $\phi^{p,q}(\alpha)=0$ unless $s=[0,\dots,0,1,\dots, p]$ and $\alpha\notin T^{p-1,q+1}$.  This proves the second claim.
	 	 
	 	 To prove the map is well defined we need to see that $\phi^{p,q}(\alpha)$ is actually an element of $C_p(B;C_q(F))$.  That is, we need to check that  either $\phi^{p,q}(\alpha)=0$ or the image of $\pi\circ \alpha\circ [0,\dots, q]$ is equal to
	 	 \[
	 	 	\pi\circ \alpha\circ[q,\dots,p+q]\circ[0] = \alpha'(s([q]))
	 	 \]
	 	When $\phi^{p,q}(\alpha)\neq 0$, we have shown $s=[0,\dots,0,1,\dots, p]$ so we have 
	 	\[
	 		\alpha'(s([q])) = \alpha'(*)
	 	\]  
	 	On the other hand,
		\[
			\pi\circ \alpha\circ [0,\dots,q] = \alpha' \circ s\circ [0,\dots,q] = \alpha'\circ [0,\dots,0],
		\]
		and the image of this simplex is obviously $\alpha'(*)$.
		\end{proof}
	
		We now describe a map going in the opposite direction that will be a chain inverse to $\phi$.  Let $U\otimes V$ be a generator of $C_p(B;C_q(F))$.  We first build a map $G=G_{U,V}:\Delta^q\times\Delta^p\to E$ by the following construction.  Given $y\in \Delta^p$, fix a line segment $L_y\colon I\to \Delta^p$ such that $L_y(0)=*$ and $L_y(1)=y$.  In the case $y=*$ we always pick the constant map.  For any $x\in \Delta^q$ we know that $\pi(U(x))=V(*)$.  Since $\pi$ is a fibration we can lift $V\circ L_y$ to a path $L_y^x:I\to E$ so that $L_y^x(0)=U(x)$ and $\pi(L_y^x(1))=V(y)$.  Again, when $y=*$ we let $L_y^x$ be the constant map at $U(x)$.  We define $G(x,y)=L_y^x(1)$.  
		\begin{remark}
			If one is more careful with the lifting property for the fibration $\pi\colon E\to B$ it is possible to construct the $L_y^x$ in such a way that the map $(x,y)\mapsto L_y^x$ is a continuous map from $\Delta^q\times \Delta^p$ to the free path space $E^I$.  Using this, it is immediate that the map $G$ constructed is in fact continuous and we will use this presently.  Since we will not have any need of the finer details of the construction just described we refer the reader to \cite[Proposition 4.26]{Mc}.
		\end{remark}
		
		\begin{lem} \label{computing G}
			For all $x\in \Delta^p$ and all $U\otimes V\in C_p(B;C_q(F))$ we have $G_{U,V}(x,*)=U(x)$.
		\end{lem}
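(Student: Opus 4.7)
The plan is to simply unwind the construction of $G_{U,V}$ in the special case $y = *$. The relevant choices were baked into the setup: in the paragraph defining $G$, when $y = *$ the line segment $L_y \colon I \to \Delta^p$ was chosen to be the constant map at $*$, and the chosen lift $L_y^x \colon I \to E$ was declared to be the constant map at $U(x)$. So the proof is essentially bookkeeping.

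Concretely, I would proceed as follows. Fix $x \in \Delta^q$ and take $y = * \in \Delta^p$. By the stipulation on $L_*$, the composite $V \circ L_* \colon I \to B$ is the constant path at $V(*)$. Since $U(x) \in F_{V(*)} = \pi^{-1}(V(*))$, the constant path at $U(x)$ in $E$ is a legitimate lift of $V \circ L_*$ starting at $U(x)$, and this is the lift we selected. Evaluating at $1$ gives $L_*^x(1) = U(x)$, and by definition $G_{U,V}(x, *) = L_*^x(1)$, so $G_{U,V}(x,*) = U(x)$.

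There is really no obstacle here; the lemma is a record of a normalization built into the construction, and its purpose is evidently to be invoked later when assembling the chain inverse to $\phi^{p,q}$ and checking its compatibility on the $E^0$ page. The only thing that might give the reader pause is confirming that the constant path at $U(x)$ really is an admissible choice of lift, but this is immediate from $\pi(U(x)) = V(*) = (V \circ L_*)(t)$ for all $t$.
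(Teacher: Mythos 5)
Your proof is correct and is essentially identical to the paper's: both simply invoke the normalization in the construction of $G_{U,V}$ that for $y=*$ the lift $L_*^x$ is the constant path at $U(x)$, so $G_{U,V}(x,*)=L_*^x(1)=U(x)$. Your extra remark verifying that the constant path is an admissible lift is a harmless bonus the paper leaves implicit.
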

		\begin{proof}
			This is the special case where $y=*$ is the basepoint.  Since we picked $L_*^x\colon I\to E$ to be the constant map at $U(x)$ we have $G_{U,V}(x,*) = L_*^x(1) = U(x)$.
		\end{proof}
		
		\par
		
		Now, for each $n$ let $1_n\in C_n(\Delta^n)$ be the identity map on $\Delta^n$.  We define $\psi\colon C_p(B;C_q(F))\to E^0_{p,q}=T^{p,q}/T^{p-1,q+1}$ on generators $U\otimes V$ to be the class represented by applying the composite 
		\[
			C_q(\Delta^q)\otimes C_p(\Delta^p)\xrightarrow{\textbf{EZ}}C_{p+q}(\Delta^q\times\Delta^p)\xrightarrow{G_*} C_{p+q}(E)
		\]
		to the element $1_q \otimes 1_p$, where $G_*$ is induced by $G_{U,V}$ and $\mathbf{EZ}$ is the Eilenberg--Zilber map. We will abuse notation and just write $\psi(U\otimes V) = G_*(\textbf{EZ}(1_q\times 1_p))$ letting context dictate whether we are working with an element in $T^{p,q}$ or a class in $E^0_{p,q}$.
		
		To see that this map is well defined we must check that the representing element is actually in $T^{p,q}$.  Note that $\textbf{EZ}(1_q\otimes 1_p)$ is just the sum over all $(q,p)$ shuffles.  This allows us to rewrite $\psi(U\otimes V)$ as
		\[
			\psi(U\otimes V) = \sum\limits_{(\mu,\sigma)}\sgn(\mu,\sigma) G_*(\mu,\sigma)
		\]  
		where the sum runs over all $(q,p)$ shuffles.  That $\psi$ is well defined now follows immediately from the following lemma.
		\begin{lem} \label{diagram_lemma}
			Let $U\otimes V\in C_p(B;C_q(F))$, $G=G_{U,V}$, and let $(\mu,\sigma)$ be any $(q,p)$ shuffle.  Then $\pi\circ G\circ (\mu,\sigma)=V\circ \sigma:\Delta^{p+q}\to B$.
		\end{lem}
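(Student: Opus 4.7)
The plan is to observe that Lemma \ref{diagram_lemma} is essentially immediate once one unwinds the definition of $G = G_{U,V}$, and that the whole content is that the projection $\pi\circ G(x,y)$ depends only on the second coordinate $y$. So my first step would be to verify the pointwise statement that $\pi\circ G(x,y) = V(y)$ for every $(x,y)\in \Delta^q\times \Delta^p$, by recalling that by construction $L_y^x\colon I\to E$ is a lift of $V\circ L_y$, meaning $\pi\circ L_y^x = V\circ L_y$. Evaluating at $t=1$ yields $\pi(G(x,y)) = \pi(L_y^x(1)) = V(L_y(1)) = V(y)$.

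The second step is simply to post-compose with the map $(\mu,\sigma)\colon \Delta^{p+q}\to \Delta^q\times \Delta^p$ sending $z\mapsto (\mu(z),\sigma(z))$. Applying the first step at the point $(x,y) = (\mu(z),\sigma(z))$ gives $(\pi\circ G\circ (\mu,\sigma))(z) = \pi(G(\mu(z),\sigma(z))) = V(\sigma(z)) = (V\circ\sigma)(z)$, which is the claimed identity of maps $\Delta^{p+q}\to B$.

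I do not anticipate any real obstacle here: the only subtle point is making sure one is using the correct lifting property, namely that the lift $L_y^x$ was chosen so that $\pi\circ L_y^x = V\circ L_y$, not merely that $\pi(L_y^x(0)) = V(*)$ and $\pi(L_y^x(1)) = V(y)$. Once that is clear, no property of the shuffle $(\mu,\sigma)$ beyond its codomain-and-components decomposition is needed; in particular the conditions defining a $(q,p)$ shuffle play no role in this lemma, which is purely about the geometry of $G$. The lemma then feeds into the well-definedness of $\psi$ because every summand $G\circ(\mu,\sigma)$ projects via $\pi$ to $V\circ\sigma$, which factors through the codegeneracy $\sigma\colon \underline{p+q}\to \underline{p}$, making each such summand $q$-degenerate with respect to $\pi$ and hence an element of $T^{p,q}$.
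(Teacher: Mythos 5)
Your proof is correct and is essentially the same as the paper's: the paper phrases it as a commutative diagram whose right square commutes because $\pi(G(x,y))=\pi(L_y^x(1))=V(y)$ and whose left square is just the projection, which is exactly your pointwise computation followed by composition with $(\mu,\sigma)$. Your remark that no special property of the shuffle is used is also consistent with the paper's argument.
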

		\begin{proof}
			The lemma follows from the following commutative diagram:
			\[
			\begin{tikzcd}[column sep = large, row sep = large]
				\Delta^{q+p}\arrow[r,"\mu\times \sigma"] \arrow[d,"\sigma"] & \Delta^q\times \Delta^p \arrow[r,"G"] \arrow[d] & E \arrow[d,"\pi"] \\
				\Delta^p \arrow[r,"="]	&	\Delta^p \arrow[r,"V"] 	& B
			\end{tikzcd}
			\]
			where the middle arrow is the projection.  The left square obviously commutes while the right square commutes because for any $(x,y)\in \Delta^q\times \Delta^p$ we have: $\pi(G(x,y)) = \pi(L_y^x(1))=V(y)$.
		\end{proof}
		McCleary \cite[Lemmas 5.24--5.28]{Mc} shows that the maps $\phi$ and $\psi$ induce the usual identification of the $E^2$ page of the Serre spectral sequence.
		\begin{proposition} \label{mccleary prop}
			The maps $\phi^{p,q}\colon E^0_{p,q}\to C_p(B;C_q(F))$ fit into a map $\phi\colon E^0_{*,*}\to C_*(B;C_*(F))$ of bigraded complexes with chain inverse $\psi$.  Furthermore, the induced isomorphism $H(\phi)\colon E^1_{*,*}\to C_*(B;H_*(F))$ is also a map of chain complexes and therefore descends to an isomorphism of bigraded complexes $E^2_{p,q}\to H_p(B;H_q(F))$.
		\end{proposition}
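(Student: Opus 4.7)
The plan is to verify three things in sequence: that $\phi$ and $\psi$ are mutually inverse chain maps at the $E^0$ level, that the resulting isomorphism on $E^1$ is itself a chain map with respect to $d^1$ and the standard singular boundary on $C_*(B;H_*(F))$, and that passing to homology in the $B$-direction yields the familiar description of $E^2_{p,q}$. As the author indicates, the content here is that of McCleary's Lemmas 5.24--5.28; I would carry it out by chasing the explicit formulas for $\phi$ and $\psi$ using the $(q,p)$-shuffle machinery from Section \ref{section: EZ}.

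For the first step, fix $\alpha\in T^{p,q}$ with $\pi\circ\alpha = \alpha'\circ s$ for a codegeneracy $s\colon\underline{p+q}\to\underline{p}$. I would expand $\partial\alpha = \sum_{i=0}^{p+q}(-1)^i \alpha\circ d_i$ and analyze, for each $i$, the $p$-degeneracy of $\pi\circ\alpha\circ d_i = \alpha'\circ s\circ d_i$. The composite $s\circ d_i$ remains a codegeneracy onto $\underline{p}$ for most $i$, so $\alpha\circ d_i$ stays in $T^{p,q-1}$; when the face removes a non-collapsed vertex of $s$ it drops to $T^{p-1,q}$, and in that case $\phi^{p,q-1}$ vanishes on its class by the argument in the proof preceding Lemma \ref{computing G}. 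The surviving faces are precisely those with $0\le i\le q$, and summed with signs they reproduce $(\partial_E\otimes 1)\phi^{p,q}(\alpha)$. A parallel analysis, using Lemma \ref{diagram_lemma} and Lemma \ref{computing G}, shows $\psi$ commutes with the boundary.

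To show $\phi$ and $\psi$ are mutually inverse, I would compute $\phi^{p,q}\circ\psi(U\otimes V)$ by expanding $\psi(U\otimes V) = \sum_{(\mu,\sigma)}\sgn(\mu,\sigma)\,G_{U,V}\circ(\mu,\sigma)$ and applying $\phi^{p,q}$ termwise. By Lemma \ref{diagram_lemma}, the $B$-component of $\phi^{p,q}(G_{U,V}\circ(\mu,\sigma))$ is $V\circ\sigma\circ[q,\dots,p+q]$, which is degenerate (hence zero in the normalized complex) unless $\sigma = [0,\dots,0,1,\dots,p]$. Only the unique shuffle with $\mu = [0,1,\dots,q,q,\dots,q]$ satisfies this, and on it Lemma \ref{computing G} identifies the $E$-component of $\phi^{p,q}$ with $U$, recovering $U\otimes V$. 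The reverse composition $\psi\circ\phi$ is the identity modulo $T^{p-1,q+1}$ by a similar case-by-case shuffle analysis, using that the simple $(q,p)$-shuffles of Definition \ref{simple shuffles} isolate the relevant contribution.

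For the $E^1$ and $E^2$ identifications, taking homology of $\phi$ in the fiber ($q$) direction together with Lemma \ref{local system} gives $E^1_{p,q}\cong C_p(B;H_q(F))$. The main obstacle is to verify that the induced $d^1$ on $E^1$ is the singular boundary in the $B$-direction: one represents a class in $E^1_{p,q}$ by a lift $\alpha\in T^{p,q}$ that is a cycle modulo $T^{p-1,q+1}$, applies $\partial$, takes the image in $F^{p-1}/F^{p-2}$, and traces this through $\phi^{p-1,q}$. The face terms that did \emph{not} survive in the $\phi$-calculation above -- those for which $s\circ d_i$ drops to a codegeneracy onto $\underline{p-1}$ -- are precisely the ones that contribute here, and the resulting signed sum is, under the identification of Lemma \ref{local system}, the standard singular boundary on the base simplex. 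Taking homology in $p$ then produces the desired isomorphism $E^2_{p,q}\cong H_p(B;H_q(F))$.
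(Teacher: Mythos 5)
Your overall plan follows the same route the paper takes, which is simply to defer to McCleary's Lemmas 5.24--5.28: the paper gives no argument of its own for this proposition, so your sketch is an attempt to fill in that citation. The computation of $\phi\circ\psi$ is fine: by Lemma \ref{diagram_lemma} every shuffle except $\sigma=[0,\dots,0,1,\dots,p]$ produces a degenerate base simplex, and Lemma \ref{computing G} identifies the surviving term with $U\otimes V$. The chain-map check for $\phi$ is also essentially right, though as written it only treats the case $\phi^{p,q}(\alpha)\neq 0$; when $s$ is not of the form $[0,\dots,0,1,\dots,p]$ one must also see that the vertical faces whose images \emph{do} hit the special form cancel in pairs (they occur with opposite signs and equal $\phi$-values), which your "drops to $T^{p-1,q}$" dichotomy does not cover.

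The genuine gap is the claim that $\psi\circ\phi$ is "the identity modulo $T^{p-1,q+1}$ by a similar case-by-case shuffle analysis." This is false at the chain level and cannot be repaired by shuffle bookkeeping: $\psi(U\otimes V)=G_{U,V,*}(\mathbf{EZ}(1_q\otimes 1_p))$ is assembled from path lifts $L^x_y$ chosen via the fibration's lifting property, so for $\alpha\in T^{p,q}$ with $\phi^{p,q}(\alpha)=U\otimes V$ the simplex $G_{U,V}\circ(\mu,\sigma)$ agrees with $\alpha$ only on the front $q$-face and in its projection to $B$; there is no reason their difference lies in $T^{p-1,q+1}$. What is true, and what McCleary proves, is that $\psi\circ\phi$ is chain \emph{homotopic} to the identity on $E^0_{p,*}$, with an explicit homotopy built from the lifting function of $\pi$; that homotopy is the real content behind the phrase "chain inverse" in the statement, and it is needed before you may conclude that $H(\phi)$ is an isomorphism on $E^1$. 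Once that is in place, your identification of $d^1$ with the singular boundary on $C_*(B;H_*(F))$ (via the faces that drop filtration, compared through Lemma \ref{local system}) and the passage to $E^2$ are the standard conclusion.
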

		 
		 Our goal is to use the maps $\phi$ and $\psi$ to put a comultiplication on $C_*(B;C_*(F))$ that descends to all pages of the spectral sequence after identifying the $E^2$ page with $H_*(B;H_q(F))$.  This comultiplication will have the advantage on the $E^2$ page it is exactly the comultiplication of Proposition \ref{new comult}.
		 \par
		 As a first step, define $\nabla^0$ in the below diagram by composition of the other maps.	 
		 \begin{equation}\label{big diagram}
		 	\begin{tikzcd}[column sep = huge]
			 	E^0_{p,q} \arrow[d,"\triangle^0"']
			 	 & C_p(B;C_q(F)) \arrow[l,"\psi"'] \arrow[d,"\nabla^0"] \\
			 	\bigoplus\limits_{\substack{a+b=p\\c+d=q}}E^0_{a,c}\otimes E^0_{b,d} \arrow[r,"\phi\otimes \phi"] & \bigoplus\limits_{\substack{a+b=p\\c+d=q}}C_a(B;C_c(F))\otimes C_b(B;C_d(F))
		 	\end{tikzcd}
		 \end{equation}
		 
		Here $\triangle^0$ is the comultiplication of Corollary \ref{has-spec}. In the remainder of this section we compute $\nabla^0(U\otimes V)$ for a generator $\mathit{U\otimes V}\in C_p(B;C_q(F))$ and use this to determine the comultiplication $\nabla^2$ on the $E^2$ page.  We assume neither $U$ nor $V$ is degenerate.
		\par
		The argument is fairly technical, so before getting into it we give a brief outline.  Since $\psi$ is constructed using the Eilenberg--Zilber map the left-down composite of diagram \ref{big diagram} will be a rather large sum of elements of the form $\triangle^0 G_*(\mu,\sigma)$ where $(\mu,\sigma)$ is a $(q,p)$ shuffle.  The key step in the proof is showing that $\phi\otimes \phi$ applied to such terms will be zero unless $(\mu,\sigma)$ is a simple shuffle in the sense of Definition \ref{simple shuffles}.  There are significantly fewer simple shuffles than there are total shuffles and so this reduction gives us enough control on the sum to compute it directly.
		\par
		To simplify the notation we assume that the field $k$ is characteristic 2.  This allows us to ignore minus signs when dealing with both the $\widetilde{\otimes}$ operator  of equation \eqref{twisty_2} and the Eilenberg--Zilber map.  One can check without too much trouble that the signs we are omitting do indeed work out when the characteristic is not 2.  Indeed, the two kinds of signs we are omitting exactly cancel with one another.
		 
		Computing directly from the definitions we have
		\[
			(\triangle \circ \psi)(U\otimes V) = \sum\limits_{(\mu,\sigma)}\sum\limits_{j=0}^{p+q}G_*(\mu\circ [0,\dots,j],\sigma\circ [0,\dots,j])\otimes G_*(\mu \circ [j,..,p+q],\sigma\circ [j,\dots,p+q])
		\]
		
		Writing $X^{\mu,\sigma}_j=G_*(\mu\circ [0,\dots,j],\sigma\circ [0,\dots,j])$ and $Y^{\mu,\sigma}_j= G_*(\mu \circ [j,..,p+q],\sigma\circ [j,\dots,p+q])$ we can obtain the more manageable form:
		\begin{equation}\label{the_sum}
			(\triangle \circ \psi)(U\otimes V) = \sum\limits_{(\mu,\sigma)}\sum\limits_{j=0}^{p+q}X_j^{\mu,\sigma}\otimes Y_j^{\mu,\sigma}
		\end{equation}
		 
		To apply the bottom map in diagram \eqref{big diagram} we must first identify which direct summands $E^0_{a,c}\otimes E^0_{b,d}$ of the lower left vector space  the elements $X_j^{\mu,\sigma}\otimes Y_j^{\mu,\sigma}$ live in.  
		\begin{lem}
			For any $(q,p)$ shuffle $(\mu,\sigma)$, and any $j$, we have $X_j^{\mu,\sigma}\in T^{\sigma(j),j-\sigma(j)}$ and $Y_j^{\mu,\sigma}\in T^{p-\sigma(j),q-j+\sigma(j)}$.
		\end{lem}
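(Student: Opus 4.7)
The plan is to apply $\pi$ to each of $X_j^{\mu,\sigma}$ and $Y_j^{\mu,\sigma}$, identify the resulting map explicitly in terms of $\sigma$ and $V$, and then factor it through a codegeneracy of the right dimensions so as to read off membership in the appropriate $T^{a,b}$.

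First, observe that the commutative diagram appearing in the proof of Lemma \ref{diagram_lemma} actually says something stronger than its statement: the outer rectangle commutes because $\pi \circ G(x,y) = V(y)$ for every $(x,y) \in \Delta^q \times \Delta^p$, with no shuffle hypothesis on the input. Consequently, for any pair of composites into $\Delta^q$ and $\Delta^p$, and in particular for the restrictions to $\Delta^j$ and to $\Delta^{p+q-j}$, we have
\[
    \pi \circ X_j^{\mu,\sigma} \;=\; V \circ \sigma \circ [0,\dots,j], \qquad
    \pi \circ Y_j^{\mu,\sigma} \;=\; V \circ \sigma \circ [j,\dots,p+q].
\]

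Next, I would analyze these maps in the simplex category exactly as in the proof of Lemma \ref{coproduct_respects_filtration}. Since $\sigma \colon \underline{p+q} \to \underline{p}$ is an order-preserving surjection, we have $\sigma(0)=0$ and $\sigma(p+q)=p$, so the image of $\sigma \circ [0,\dots,j]$ is the initial segment $\{0,1,\dots,\sigma(j)\}$, of size $\sigma(j)+1$, while the image of $\sigma \circ [j,\dots,p+q]$ is the terminal segment $\{\sigma(j),\dots,p\}$, of size $p-\sigma(j)+1$. Each restriction therefore factors uniquely as an order-preserving surjection followed by an order-preserving injection, yielding
\[
    \sigma \circ [0,\dots,j] \;=\; [0,\dots,\sigma(j)] \circ s_1, \qquad
    \sigma \circ [j,\dots,p+q] \;=\; [\sigma(j),\dots,p] \circ s_2,
\]
where $s_1 \colon \underline{j} \to \underline{\sigma(j)}$ and $s_2 \colon \underline{p+q-j} \to \underline{p-\sigma(j)}$ are codegeneracies.

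Setting $\alpha_1' = V \circ [0,\dots,\sigma(j)]$ and $\alpha_2' = V \circ [\sigma(j),\dots,p]$, these two factorizations give $\pi \circ X_j^{\mu,\sigma} = \alpha_1' \circ s_1$ and $\pi \circ Y_j^{\mu,\sigma} = \alpha_2' \circ s_2$. The first equation exhibits the $j$-simplex $X_j^{\mu,\sigma}$ as $(j-\sigma(j))$-degenerate with respect to $\pi$ in the sense of Definition \ref{i-degenerate}, hence $X_j^{\mu,\sigma} \in T^{\sigma(j),\,j-\sigma(j)}$, and the second equation symmetrically exhibits the $(p+q-j)$-simplex $Y_j^{\mu,\sigma}$ as $(q-j+\sigma(j))$-degenerate, hence $Y_j^{\mu,\sigma} \in T^{p-\sigma(j),\,q-j+\sigma(j)}$. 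There is no serious obstacle in the argument: the only real work is bookkeeping the degeneracy indices, which are forced by the dimensions of the initial and terminal images of $\sigma$.
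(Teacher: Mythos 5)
Your argument is correct and follows essentially the same route as the paper: identify $\pi\circ X_j^{\mu,\sigma}$ and $\pi\circ Y_j^{\mu,\sigma}$ via the diagram behind Lemma \ref{diagram_lemma}, then count the image of $\sigma$ restricted to the initial and terminal segments to read off the degeneracy and hence membership in the appropriate $T^{a,b}$. Your explicit epi-mono factorization and the observation that the commutative diagram needs no shuffle hypothesis are just slightly more careful spellings-out of the paper's "similar counting" step.
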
  
		\begin{proof}
			Using Lemma \ref{diagram_lemma}, we know that 
			\[
				\pi_*(X_j^{\mu,\sigma}) = V\circ \sigma\circ[0,\dots,j]
			\]
			Since $V$ is not degenerate, it suffices to compute the degeneracy of $\sigma\circ [0,\dots,j]$ as a map in the simplex category.  This is very similar to the counting in the proof of Lemma \ref{coproduct_respects_filtration}.
			 
			Because $\sigma:\underline{p+q}\to \underline{p}$ is a surjective map, we must have $\sigma\circ[0,\dots,j]$ is a $j$ simplex that takes exactly $\sigma(j)+1\leq j+1$ values.  It follows that $V\circ \sigma\circ[0\dots j]$  is $(j-\sigma(j))$-degenerate and therefore $G_*(\mu\circ [0,\dots,j],\sigma\circ [0,\dots,j])\in T^{\sigma(j),j-\sigma(j)}$.  The computation for $Y_j^{\mu,\sigma}$ is similar.  
		\end{proof}
		
		With the degeneracies in hand we proceed to show the only terms $\phi\otimes \phi$ does not take to zero correspond to the simple $(q,p)$ shuffles of Definition \ref{simple shuffles}.  As a first step, we have
		\begin{proposition} \label{first_prop}
			For a $(q,p)$ shuffle $(\mu,\sigma)$, let $a$ be the largest number so that $\sigma(a)=0$ and let be the largest number so that $\sigma(b)=b-a$. Then $(\phi^{\sigma(j),j-\sigma(j)})(X_j^{\mu,\sigma})$ is zero for all $j>b$. 
		\end{proposition}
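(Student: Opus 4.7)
The plan is to unpack the vanishing criterion for $\phi$ proved earlier and translate it into a statement about where $\sigma$ can be strictly increasing. Recall from the proof that $\phi^{p',q'}(\alpha)$ is zero whenever $\pi \circ \alpha \circ [q',\dots,p'+q']$ is a degenerate simplex in $B$. Applying this with $p' = \sigma(j)$ and $q' = j - \sigma(j)$, and using Lemma \ref{diagram_lemma} to rewrite $\pi \circ X_j^{\mu,\sigma} = V \circ \sigma \circ [0,\dots,j]$, the simplex we must examine is
\[
V \circ [\sigma(q'),\sigma(q'+1),\dots,\sigma(j)] : \Delta^{p'} \longrightarrow B.
\]
Since $V$ itself is non-degenerate, this composite is degenerate if and only if the simplicial map $[\sigma(q'),\dots,\sigma(j)] : \underline{p'} \to \underline{p}$ has a repeated consecutive value.

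Next I would exploit the combinatorial structure of $\sigma$. As an order-preserving surjection $\underline{p+q} \to \underline{p}$, its successive increments are all either $0$ or $1$. The sequence $[\sigma(q'),\dots,\sigma(j)]$ contains $j - q' + 1 = \sigma(j)+1$ entries and runs from $\sigma(q')$ to $\sigma(j)$, so non-degeneracy forces every increment on $[q',j]$ to equal $1$. In particular $\sigma(q') = 0$ and $\sigma(i) = i - q'$ for all $q' \le i \le j$.

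Finally I would combine this with the definitions of $a$ and $b$. The condition $\sigma(q') = 0$ forces $q' \le a$ by maximality of $a$. But if $q' < a$, then $q'+1 \le a$ and hence $\sigma(q'+1) = 0$, contradicting the strict increase at index $q'+1$. Therefore $q' = a$, which gives $\sigma(j) = j - a$ and, more generally, $\sigma(i) = i - a$ on the whole interval $[a,j]$. By maximality of $b$ this is only possible when $j \le b$. Contrapositively, whenever $j > b$ the map $V \circ [\sigma(q'),\dots,\sigma(j)]$ is degenerate and hence $\phi^{\sigma(j),j-\sigma(j)}(X_j^{\mu,\sigma}) = 0$.

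The argument is really bookkeeping: the only place care is required is the standard observation that order-preserving surjections in $\mathbf{\Delta}$ have $\{0,1\}$-increments, which is what converts the degeneracy check on $[\sigma(q'),\dots,\sigma(j)]$ into the clean statement that $\sigma$ must be strictly increasing on $[q',j]$. Once that is in hand, identifying $q'$ with $a$ and deducing $j \le b$ is immediate from the definitions.
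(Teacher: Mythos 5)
Your argument is correct, and its starting reduction is the same as the paper's: both use that $\phi^{\sigma(j),j-\sigma(j)}$ kills the term as soon as the $B$-factor $\pi\circ X_j^{\mu,\sigma}\circ[j-\sigma(j),\dots,j]=V\circ\sigma\circ[j-\sigma(j),\dots,j]$ is degenerate, via the normalized chains and Lemma \ref{diagram_lemma}. Where you genuinely differ is the combinatorial core. The paper proves degeneracy of $\sigma\circ[j-\sigma(j),\dots,j]$ for every $j>b$ by induction on $j$, starting at $j=b+1$ and splitting into the cases $\sigma(k+1)=\sigma(k)$ and $\sigma(k+1)=\sigma(k)+1$. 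You instead argue contrapositively and without induction: using that a monotone surjection $\underline{p+q}\to\underline{p}$ has increments in $\{0,1\}$, non-degeneracy of the restricted map forces unit increments on the whole window $[j-\sigma(j),j]$, hence $\sigma(j-\sigma(j))=0$, hence (with $q'=j-\sigma(j)$) $q'=a$ and $\sigma(j)=j-a$, so $j\le b$ by maximality of $b$. This is shorter and in fact sharper than the paper's argument, since it pins down exactly which windows can survive ($\sigma$ constant at $0$ up to $a$, then strictly increasing), which is the structure exploited afterwards in Corollary \ref{standard_case}. Two small points to tidy up. First, the ``if and only if'' you assert is more than you need, and its ``only if'' half is not true for singular simplices: a non-degenerate $V$ can have degenerate faces, so injectivity of $[\sigma(q'),\dots,\sigma(j)]$ does not guarantee non-degeneracy of $V\circ[\sigma(q'),\dots,\sigma(j)]$. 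Your chain of implications only uses the valid direction (non-injective face map implies degenerate composite), so nothing breaks, but the biconditional should be weakened. Second, the step ``therefore $q'=a$'' invokes the increment at index $q'+1$, which lies in the window only when $\sigma(j)\geq 1$; in the corner case $\sigma(j)=0$ one has $j=q'\leq a<b$ directly (the paper's formulation never meets this case since $j>b>a$ forces $\sigma(j)\geq 1$), so the conclusion $j\leq b$ still holds, but that case deserves a one-line remark.
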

		\begin{remark}\label{temp remark 1}
			The $a$ and $b$ in this statement are maximal choices of $a$ and $b$ satisfying $(1)$ and $(2)$ of Definition \ref{simple shuffles}.
		\end{remark}
	
		\begin{proof}
			 By surjectivity of $\sigma$ we always have $\sigma(a+1)=1=a+1-a$ so we have $b>a$.  By definition we have 
			\[
				\phi^{\sigma(j),j-\sigma(j)}(X_j^{\mu,\sigma}) = (X_j^{\mu,\sigma}\circ [0,\dots,j-\sigma(j)])\otimes (\pi\circ X_j^{\mu,\sigma}\circ [j-\sigma(j),\dots,j])
			\]
			To show this is zero, it is enough to show that $\pi\circ X_j^{\mu,\sigma}\circ [j-\sigma(j),\dots,j]$ is a degenerate simplex in $B$.  By Lemma \ref{diagram_lemma} we know 
			\[
				\pi\circ X_j^{\mu,\sigma}\circ [j-\sigma(j),\dots,j] = V\circ \sigma \circ [0,\dots,j]\circ [j-\sigma(j),\dots,j] = V\circ \sigma\circ [j-\sigma(j),\dots,j], 
			\]
			so it suffices to show $\sigma\circ [j-\sigma(j),\dots,j]$ is a codegeneracy whenever $j>b$.  Equivalently, we will show that some value in $\{\sigma(j-\sigma(j)),\dots, \sigma(j) \}$ is repeated.
			
			We first prove the result for $j=b+1$ then proceed inductively.  Since $\sigma$ is surjective and monotone we must have $\sigma(b+1)=b-a$ or $\sigma(b+1)=b-a+1$.  As the later case is impossible by the maximality of $b$, we have $\sigma(b+1)=b-a$.  Thus $b+1-\sigma(b+1)=a+1$ and
			\[
				\sigma\circ [b+1-\sigma(b+1),\dots,b+1] = \sigma\circ [a+1,\dots,b,b+1]
			\]
			This is degenerate because $\sigma(b)=b-a=\sigma(b+1)$. We have used here that $a<b$ so that $a+1\leq b$.
			
			We now work inductively.  Suppose the result is true for $j=b+1, b+2,\dots,k$.  There are two possibilities: $\sigma(k+1)=\sigma(k)$ or $\sigma(k+1)=\sigma(k)+1$.  In the first case, we have 
			\[
				k+1-\sigma(k+1)=k-\sigma(k)+1< k+1,
			\]
			so $k+1-\sigma(k+1)\leq k$ and $\sigma\circ [k+1-\sigma(k+1),\dots,k,k+1]$ must be degenerate since $\sigma(k)=\sigma(k+1)$.
			
			In the case $\sigma(k+1)=\sigma(k)+1$, we have $k+1-\sigma(k+1)=k-\sigma(k)$ and 
			\[
				\sigma\circ [k+1-\sigma(k+1),\dots,k+1]=\sigma\circ [k-\sigma(k),\dots,k+1]
			\]
			is degenerate because  $\sigma\circ [k-\sigma(k),\dots,k]$ is degenerate by induction.
		\end{proof}
		Similarly, we have:
		\begin{proposition}\label{second_prop}
			For $(q,p)$ shuffle $(\mu,\sigma)$, let $c$ be the smallest number such that $\sigma\circ [c,\dots,p+q]$ is non-degenerate and let $b<c$ be the smallest number so that $\sigma(b)=\sigma(c)$. Then $(\phi^{p-\sigma(j),q-j+\sigma(j)})(Y_j^{\mu,\sigma})$ is zero for all $j<b$.
		\end{proposition}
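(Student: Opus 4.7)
The argument will mirror the proof of Proposition \ref{first_prop} almost exactly, with the roles of the ``left'' and ``right'' halves of the simplex swapped. Let me write $v = \sigma(j)$ for short. By definition of $\phi$,
\[
\phi^{p-v,\,q-j+v}(Y_j^{\mu,\sigma}) = (Y_j^{\mu,\sigma}\circ[0,\dots,q-j+v])\otimes(\pi\circ Y_j^{\mu,\sigma}\circ[q-j+v,\dots,p+q-j]),
\]
so it suffices to show the second factor is degenerate. Applying Lemma \ref{diagram_lemma} and composing the two interval maps, one finds
\[
\pi\circ Y_j^{\mu,\sigma}\circ[q-j+v,\dots,p+q-j] \;=\; V\circ \sigma\circ[q+v,\dots,p+q].
\]
Since $V$ is non-degenerate, the whole composite is degenerate as soon as $\sigma\circ[q+v,\dots,p+q]$ is a codegeneracy, i.e.\ as soon as two consecutive values among $\sigma(q+v),\sigma(q+v+1),\dots,\sigma(p+q)$ agree. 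This is the content that needs to be proved.

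To set up the base case, I would first observe that because $\sigma\circ[c,\dots,p+q]$ is non-degenerate (so $\sigma$ increases by exactly $1$ at each of the $p+q-c$ positions after $c$), we must have $p-\sigma(c) = p+q-c$; equivalently $c = q+\sigma(c)$. Next, by the minimality of $b$, $\sigma(b-1) < \sigma(b) = \sigma(c)$, and because $\sigma$ can only step by $0$ or $1$, this forces $\sigma(b-1) = \sigma(c)-1$. Combining, $q + \sigma(b-1) = c-1$, so the string of interest is $\sigma\circ[c-1,c,\dots,p+q]$, which has $\sigma(c-1)=\sigma(c)$ (again by minimality of $b$, since otherwise $c-1$ would be an earlier index with value $\sigma(c)$). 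This is the base case $j = b-1$.

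For the inductive step, suppose the claim is proved for $j+1,\dots,b-1$ and take $j<b-1$. Then $\sigma(j+1)$ equals either $\sigma(j)$ or $\sigma(j)+1$. In the first case $q+\sigma(j) = q+\sigma(j+1)$ and the two strings agree, so the induction hypothesis applies verbatim. In the second case, $q+\sigma(j+1) = q+\sigma(j)+1$, so $\sigma\circ[q+\sigma(j+1),\dots,p+q]$ is a suffix of $\sigma\circ[q+\sigma(j),\dots,p+q]$; any repeated consecutive pair in the suffix is still a repeated consecutive pair in the longer string, hence the latter is degenerate too. This completes the induction and the proposition.

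The only mildly subtle point is the identification $c = q+\sigma(c)$ used in the base case; everything else is a direct downward-induction mirror of the upward induction in Proposition \ref{first_prop}, and the comment of Remark \ref{temp remark 1} applies verbatim: the $b$ and $c$ here are the minimal choices satisfying conditions (3) and (4) of Definition \ref{simple shuffles}, so the two propositions together will say exactly that $\phi\otimes\phi$ kills every tensor $X_j^{\mu,\sigma}\otimes Y_j^{\mu,\sigma}$ coming from a non-simple shuffle.
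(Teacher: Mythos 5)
Your argument is correct and follows essentially the same route as the paper: reduce to showing $\sigma\circ[q+\sigma(j),\dots,p+q]$ is degenerate, establish the base case $j=b-1$ via the identity $c=q+\sigma(c)$, and run a downward induction using $\sigma(j)\leq\sigma(j+1)$ so that a repetition in the shorter string persists in the longer one. The only slip is the parenthetical justifying $\sigma(c-1)=\sigma(c)$: this follows from $b\leq c-1$ together with monotonicity of $\sigma$ (or, as the paper argues, directly from the minimality of $c$, which forces $\sigma\circ[c-1,\dots,p+q]$ to be degenerate), not from the minimality of $b$ as you phrased it.
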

		\begin{remark}\label{temp remark 2}
			The $b$ and $c$ in this statement are minimal choices of $b$ and $c$ satisfying the last two conditions of the definition of a simple $(q,p)$ shuffle.
		\end{remark}
		\begin{proof}
			This time it suffices to show that 
			\[
				\sigma\circ [j,\dots,p+q]\circ [q-j+\sigma(j),\dots,p+q-j] = \sigma\circ [q+\sigma(j),\dots,p+q]
			\]
			is degenerate for all $j<b$.  When $j=b-1$, we have $\sigma(b-1)=\sigma(b)-1$, so $q+\sigma(b-1) = q+\sigma(b)-1 = q+\sigma(c)-1$.  It is not too hard to check that the non-degeneracy condition on $c$ implies that $p+q-c=p-\sigma(c)$.  Thus $q+\sigma(c)=c$, so $q+\sigma(b-1)=c-1$ and the result follows because by the minimality of $c$ we must have $\sigma\circ [c-1,\dots,p+q]$ is degenerate. 
			\par 
			Now suppose the result is true for $b-1,b-2,\dots,k$.  There are two options, $\sigma(k-1)=\sigma(k)-1$ and $\sigma(k-1)=\sigma(k)$.  In either case the result follows because $q+\sigma(k+1)\leq q+\sigma(k)$ and we assumed the result for $k$.
		\end{proof}
		
		Putting the last two propositions together gives us what we are after.
		\begin{proposition} \label{not simple gives zero}
			If $(\mu,\sigma)$ is not a simple $(q,p)$ shuffle then $(\phi\otimes \phi)(X_j^{\mu,\sigma}\otimes Y_j^{\mu,\sigma})=0$
		\end{proposition}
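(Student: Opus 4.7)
The plan is to combine Propositions \ref{first_prop} and \ref{second_prop} via the observation that, after tracking which direct summand each element lives in, $(\phi\otimes\phi)(X_j^{\mu,\sigma}\otimes Y_j^{\mu,\sigma})$ factors as $\phi(X_j^{\mu,\sigma})\otimes \phi(Y_j^{\mu,\sigma})$, and so vanishes whenever either tensor factor does. Let $a_1, b_1$ be the maximal indices from Proposition \ref{first_prop} and let $b_2, c_2$ be the minimal indices from Proposition \ref{second_prop}. Those propositions give $\phi(X_j^{\mu,\sigma}) = 0$ for every $j > b_1$ and $\phi(Y_j^{\mu,\sigma}) = 0$ for every $j < b_2$. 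Consequently, if $(\phi\otimes\phi)(X_j^{\mu,\sigma}\otimes Y_j^{\mu,\sigma})$ is nonzero for some $j$, then $b_2 \le j \le b_1$, and in particular $b_2 \le b_1$.

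I will prove the statement by contrapositive, showing that the inequality $b_2 \le b_1$ already forces $(\mu,\sigma)$ to be simple. The witness $(a,b,c)$ for Definition \ref{simple shuffles} will be $(a_1,\,\max(a_1,b_2),\,c_2)$. Condition (1) holds by definition of $a_1$; condition (3) follows from the monotonicity of $\sigma$ together with the equality $\sigma(b_2) = \sigma(c_2)$; and condition (4) is the definition of $c_2$. For condition (2), the key observation is that $\sigma\colon \underline{p+q}\to\underline{p}$ is a monotone surjection, so each difference $\sigma(i+1)-\sigma(i)$ lies in $\{0,1\}$. Since $\sigma(b_1) - \sigma(a_1) = b_1 - a_1$ equals the number of steps from $a_1$ to $b_1$, every such step must increase by exactly one. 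Hence $\sigma\circ[a_1,\dots,b_1]$ is non-degenerate, and this non-degeneracy is inherited by the restriction to $[a_1,b]$ for any $b\le b_1$, in particular for $b=\max(a_1,b_2)$.

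The main technical nuisance is the boundary case $b_2 < a_1$. Here the minimality of $b_2$ together with $\sigma(b_2) = \sigma(c_2)$ and the monotone surjectivity of $\sigma$ force $b_2 = 0$, while the minimality of $c_2$ combined with the fact that $\sigma$ takes the value $0$ exactly on the interval $[0,a_1]$ forces $c_2 = a_1$. Then the witness collapses to $(a_1, a_1, a_1)$, for which conditions (2) and (3) hold trivially and condition (4) is exactly the non-degeneracy of $\sigma\circ[a_1,\dots,p+q]$ guaranteed by $c_2 = a_1$. With this edge case dispensed with, the case $b_2 \ge a_1$ is the content of the previous paragraph, completing the verification that $(\mu,\sigma)$ is simple whenever $b_2 \le b_1$, and therefore that $(\phi\otimes\phi)(X_j^{\mu,\sigma}\otimes Y_j^{\mu,\sigma})$ vanishes at every $j$ for every non-simple shuffle.
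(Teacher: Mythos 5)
Your proof is correct. The outer reduction coincides with the paper's: combine Propositions \ref{first_prop} and \ref{second_prop} so that any $j$ contributing a nonzero term must satisfy $b_2\le j\le b_1$, and then argue that non-simple shuffles never permit this. Where you genuinely diverge is in how that combinatorial fact is established. The paper first sets aside the exceptional shuffle $\sigma=[0,\dots,0,1,\dots,p]$, proves by contradiction that $b_1\le b_2$ always holds, and then asserts, via Remarks \ref{temp remark 1} and \ref{temp remark 2} on the maximality of $(a,b_1)$ and the minimality of $(b_2,c)$, that a shuffle is simple exactly when $b_1=b_2$, so non-simple forces $b_1<b_2$. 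You instead prove the contrapositive directly: assuming $b_2\le b_1$, you exhibit the explicit witness triple $(a_1,\max(a_1,b_2),c_2)$ for Definition \ref{simple shuffles}, using that $\sigma$ is a monotone surjection with steps in $\{0,1\}$, so $\sigma(b_1)-\sigma(a_1)=b_1-a_1$ forces $\sigma\circ[a_1,\dots,b_1]$ to be strictly increasing and hence its restriction to $[a_1,\max(a_1,b_2)]$ non-degenerate, while monotonicity and $\sigma(b_2)=\sigma(c_2)$ give the constancy condition. This buys a more self-contained argument---the ``simple iff $b_1=b_2$'' claim, which the paper leaves to the remarks, is replaced by a direct check of the four conditions---and the exceptional shuffle needs no separate exclusion: it surfaces naturally as your boundary case $b_2<a_1$, where $c_2=a_1$. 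One cosmetic nitpick in that boundary case: $c_2=a_1$ follows from $\sigma(c_2)=0$ (giving $c_2\le a_1$) together with the defining non-degeneracy of $\sigma\circ[c_2,\dots,p+q]$ (if $c_2<a_1$ then $\sigma(c_2)=\sigma(c_2+1)=0$), not from the minimality of $c_2$ as you state; the conclusion is unaffected.
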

		\begin{proof}
			Let $a$ and $b_1$ be as in Proposition \ref{first_prop} and let $b_2$ and $c$ be as in Proposition \ref{second_prop}.  Now suppose that $\sigma\neq [0,\dots,0,1,\dots,p]$.  Since this is a simple shuffle it will not affect our result but we need to exclude this shuffle because it is the only one for which $c=a$.  Assuming $c\neq a$ we claim that $b_1\leq b_2$.
			\par
			Suppose that $b_1>b_2$.  By the definition of $b_1$, $\sigma\circ [a,\dots,b_2,\dots,b_1]$ must be non-degenerate.  Then it must be that $b_2=c$ because otherwise $b_2+1\leq c,$ implying that $\sigma(b_2)=\sigma(b_2+1)=\sigma(c)$ making $\sigma\circ [a,\dots,b_2,..,b_1]$ degenerate.  On the other hand, if $b_2=c$ then because $\sigma\circ [c,\dots,p+q]$ is non-degenerate we would have $\sigma\circ [a,\dots,p+q]$ is non-degenerate.  But then the definition of $c$ implies that $c=a$, a contradiction of our assumption.
			\par
			It follows from Remarks \ref{temp remark 1} and \ref{temp remark 2} that a $(q,p)$ shuffle, except the exceptional one with $\sigma=[0,\dots,0,1,\dots,p]$, is simple if and only if $b_1=b_2$.  If $b_1<b_2$, then for any $j$ we must have $j>b_1$ or $j<b_2$ and the result follows from Propositions \ref{first_prop} and \ref{second_prop}.
		\end{proof}
		
		For the rest of this section we will reserve the letters $a$, $b$ and $c$ for their meaning in the context of simple $(q,p)$ shuffles. The key technical result is  the next corollary, but first we need some notation.  For any $y\in \Delta^p$ recall that we have fixed line segments $L_y\colon I\to \Delta^p$ so that $L_y(0)=*$ and $L_y(1)=y$.  These induce, in the standard way, homotopy equivalences of the fibers $F_{V(*)}\cong F_{V(y)}$.  In the case where $y$ is a vertex of $\Delta^p$, that is $y\in \{0,1,\dots,p\}$, we will denote the resulting isomorphisms of chain groups by $\alpha_y\colon C_{*}(F_{V(*)})\to C_{*}(F_{V(y)})$.  Because we have assumed that $B$ is simply connected, these isomorphisms result in canonical identifications in homology.
		
		We will consistently make use of Notation \ref{tensor notations} and the reader is encouraged to review this before continuing with the rest of the section.
		\begin{cor} \label{standard_case}
			Let $(\mu,\sigma)$ be a simple $(q,p)$ shuffle with $\sigma\neq [0,\dots,0,1,\dots, p]$.  Then
			$
				(\phi\otimes\phi)(X_j^{\mu,\sigma}\otimes Y_j^{\mu,\sigma})
			$ 
			is zero unless $j=b$.  When $j=b$, 
			\[
			 	(\phi\otimes\phi)(X_b^{\mu,\sigma}\otimes Y_b^{\mu,\sigma}) = (U\circ[0,\dots,a]\squarebin V\circ[0,\dots,b-a])\otimes (\alpha_{b-a}U\circ[a,\dots,q]\squarebin V\circ[b-a,\dots,p])
			\]
		\end{cor}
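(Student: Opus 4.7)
The plan is to combine the two vanishing results of Propositions \ref{first_prop} and \ref{second_prop} to pin down the only surviving index $j$, and then to unwind the definitions of $G_{U,V}$ and $\phi$ at that index. For a simple $(q,p)$ shuffle $(\mu,\sigma)$ not equal to $[0,\dots,0,1,\dots,p]$, Remarks \ref{temp remark 1} and \ref{temp remark 2} force the $b_1$ of Proposition \ref{first_prop} and the $b_2$ of Proposition \ref{second_prop} to both equal the number $b$ from Definition \ref{simple shuffles}. Proposition \ref{first_prop} then sends the $\phi$-factor of $X_j^{\mu,\sigma}$ to zero for $j > b$, while Proposition \ref{second_prop} sends the $\phi$-factor of $Y_j^{\mu,\sigma}$ to zero for $j < b$, so only the $j=b$ term contributes.

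For the $j=b$ term, I first read off the necessary data from the simple-shuffle conditions. Since $\sigma(a)=0$ and $\sigma\circ[a,\dots,b]$ is non-degenerate and monotone, $\sigma(b)=b-a$; since $\sigma\circ[c,\dots,p+q]$ is non-degenerate with $\sigma(c)=b-a$, one obtains $c=q+b-a$. Applying the complementary-degeneracy condition on $(\mu,\sigma)$ to the two $\sigma$-constant blocks, I also obtain $\mu\circ[0,\dots,a]=[0,1,\dots,a]$ and $\mu\circ[b,\dots,c]=[a,a+1,\dots,q]$. This places $X_b^{\mu,\sigma}\in T^{b-a,\,a}$ and $Y_b^{\mu,\sigma}\in T^{p-b+a,\,q-a}$, so the relevant maps are $\phi^{b-a,a}$ and $\phi^{p-b+a,\,q-a}$.

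It remains to compute the four tensor factors from the definitions. The base-component of $\phi(X_b^{\mu,\sigma})$ is $\pi\circ X_b^{\mu,\sigma}\circ[a,\dots,b]$, which by Lemma \ref{diagram_lemma} equals $V\circ\sigma\circ[a,\dots,b]=V\circ[0,\dots,b-a]$; similarly the base-component of $\phi(Y_b^{\mu,\sigma})$ is $V\circ\sigma\circ[c,\dots,p+q]=V\circ[b-a,\dots,p]$. For the fiber-component of $\phi(X_b^{\mu,\sigma})$ I restrict to the initial block $[0,\dots,a]$, where $\sigma$ is constantly $0$ and $\mu=[0,\dots,a]$; Lemma \ref{computing G} then collapses $G(x,*) = U(x)$, yielding $U\circ[0,\dots,a]$. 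The fiber-component of $\phi(Y_b^{\mu,\sigma})$ is the key step: on the middle block $\sigma$ is constantly the vertex $b-a$ and $\mu$ is $[a,\dots,q]$, so $G_{U,V}\circ(\mu,\sigma)$ restricted to this block sends $x\in\Delta^{q-a}$ to the endpoint of the lift of $V\circ L_{b-a}$ starting at $U\circ[a,\dots,q](x)$, which by the very definition of the parallel transport map is $\alpha_{b-a}(U\circ[a,\dots,q])(x)$. Assembling these four pieces via \eqref{twisty_2} yields the displayed formula. The main obstacle I anticipate is making this last identification precise at the level of singular chains; this requires the strengthened form of the lift construction (continuity of $(x,y)\mapsto L_y^x$ into the free path space) noted in the remark preceding Lemma \ref{computing G}.
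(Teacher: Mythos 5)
Your proposal is correct and follows essentially the same route as the paper: the vanishing for $j\neq b$ comes from combining Propositions \ref{first_prop} and \ref{second_prop} via $b_1=b_2=b$, and the $j=b$ term is computed factor by factor using Lemma \ref{diagram_lemma} for the base components and Lemma \ref{computing G} for the $X_b$ fiber component. Your treatment of the $Y_b$ fiber component (identifying $G_{U,V}(\cdot,b-a)$ with the parallel-transport map $\alpha_{b-a}$) is just a more explicit rendering of the step the paper dismisses as ``completely analogous,'' so there is no substantive difference.
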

		\begin{proof}
			The first claim is immediate from the proof of the last proposition.  When $j=b$, we compute $\phi^{\sigma(b),a}(X_b^{\mu,\sigma})$ and $\phi^{p-\sigma(b),q-b+\sigma(b)}(Y_b^{\mu,\sigma})$.  For reference, we write $\sigma$ as:
			\[
				\sigma = [0,0,\dots,0,1,\dots,\sigma(b),\sigma(b),\dots,\sigma(b),\sigma(b)+1,\dots,p+q],
			\]
			where $c$ appears only implicitly as the last coordinate for which $\sigma(c)=\sigma(b)$.  Similarly, we write $\mu$ as:
			\[
				\mu = [0,1,\dots,a,a,\dots,a,a+1,\dots,q,q,\dots,q]
			\]
			Thus $\sigma\circ [0,\dots,b]=[0,\dots,0,1,\dots,\sigma(b)]$ and  $\sigma\circ [b,\dots,p+q]=[\sigma(b),\dots,\sigma(b),\sigma(b)+1,\dots,p+q]$. By definition:
			\[
				\phi^{\sigma(b),a}(X_b^{\mu,\sigma}) = (X_b^{\mu,\sigma}\circ[0,\dots,a])\otimes(\pi\circ X_b^{\mu,\sigma}\circ[a,\dots,b])
			\]
			where we have used the fact that $a+\sigma(b)=a+b-a=b$ to identify $[a,\dots,a+\sigma(b)]=[a,\dots,b]$. The right term in the tensor is, by Lemma \ref{diagram_lemma}, $V\circ \sigma\circ [a,\dots,b] = V\circ[0,\dots,b-a]$.  For the left term, we have:
			\begin{align*}
				X_b^{\mu,\sigma}\circ[0,\dots,a] & = G_{U,V}\circ(\mu\circ [0,\dots,b],\sigma\circ[0,\dots,b])\circ [0,\dots, a] \\
				& =  G_{U,V}\circ(\mu\circ [0,\dots,a],\sigma\circ[0,\dots,a]) \\
				& =	 G_{U,V}\circ([0,\dots,a],[0,\dots,0])
			\end{align*}
			Given a point $x\in \Delta^a$, we have 
			\[
			 	(X_b^{\mu,\sigma}\circ[0,\dots,a])(x) = (G_{U,V}\circ([0,\dots,a],[0,\dots,0]))(x) = G_{U,V}(x,*) = U(x)
			\]
			where the last equality is Lemma \ref{computing G}.  We conclude that $X_b^{\mu,\sigma}\circ[0,\dots,a]=U\circ [0,\dots,a]$, completing the description of $\phi^{\sigma(b),a}(X_b^{\mu,\sigma})$.
			\par
			The computation of $\phi^{p-\sigma(b),q-b+\sigma(b)}(Y_b^{\mu,\sigma})$ is completely analogous.  There is a minor difference in that the $\pi(U\circ[a,\dots,q])=V([0])$ which is a different basepoint than $V[b-a]$.  This is accounted for by the isomorphism $\alpha_{b-a}$.
		\end{proof}
	
		In the case $\sigma =[0,\dots,0,1,\dots,p]$ we will get a non-zero term for all $0\leq j\leq p+q$.  Other than this the computations are the same as the last proposition.
		\begin{cor}\label{exceptional_case}
			If $\sigma= [0,\dots,0,1,\dots, p]$ then for $j< q$ we have
			\[
				(\phi\otimes\phi)(X_j^{\mu,\sigma}\otimes Y_j^{\mu,\sigma}) = (U\circ[0,\dots,j]\squarebin V\circ[0])\otimes (U\circ[j,\dots,q]\squarebin V)
			\]
			For  $j\geq q$ we have
			\[
				(\phi\otimes\phi)(X_j^{\mu,\sigma}\otimes Y_j^{\mu,\sigma}) = (U\squarebin V\circ[0,\dots,j-q])\otimes (\alpha_{j-q}U\circ[j]\squarebin V[j-q,\dots,p])
			\]
		\end{cor}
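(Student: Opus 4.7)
The proof follows the template of Corollary \ref{standard_case}, now specialized to the exceptional shuffle. First observe that the complementarity condition in the definition of a $(q,p)$ shuffle forces $\mu=[0,1,\dots,q,q,\dots,q]$ when $\sigma=[0,\dots,0,1,\dots,p]$: wherever $\sigma$ repeats a value (its first $q$ transitions), $\mu$ must strictly increase, and wherever $\sigma$ strictly increases (its last $p$ transitions), $\mu$ must repeat. I then split into the two ranges of $j$.

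For $j<q$ we have $\sigma(j)=0$, so $X_j^{\mu,\sigma}\in T^{0,j}$ and $Y_j^{\mu,\sigma}\in T^{p,q-j}$. Unwinding $\phi^{0,j}$ and $\phi^{p,q-j}$, each of the four face restrictions reduces to an application of Lemma \ref{computing G} or Lemma \ref{diagram_lemma}. Explicitly, the $[0,\dots,j]$ face of $X_j$ is $G_{U,V}\circ([0,\dots,j],[0,\dots,0])=U\circ[0,\dots,j]$, and the $[0,\dots,q-j]$ face of $Y_j$ is $G_{U,V}\circ([j,\dots,q],[0,\dots,0])=U\circ[j,\dots,q]$, while the two $\pi\circ$ restrictions simplify via $V\circ\sigma$ to $V\circ[0]$ and to the full simplex $V$ respectively. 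Assembling these four pieces yields the first displayed formula.

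For $j\geq q$ write $j=q+m$, so $\sigma(j)=m$ and $X_j\in T^{m,q}$, $Y_j\in T^{p-m,0}$. The left factor $\phi^{m,q}(X_j)$ is handled by the same arguments as above: $X_j\circ[0,\dots,q]=U$ by Lemma \ref{computing G}, and $\pi\circ X_j\circ[q,\dots,q+m]=V\circ[0,\dots,m]$ by Lemma \ref{diagram_lemma}. For the right factor $\phi^{p-m,0}(Y_j)$, Lemma \ref{diagram_lemma} gives $\pi\circ Y_j\circ[0,\dots,p-m]=V\circ[m,\dots,p]$, while the $0$-simplex $Y_j\circ[0]$ evaluates to the point $G_{U,V}(q,m)$. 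The one genuinely new ingredient is to identify this point with $\alpha_m(U(q))$: by construction $G_{U,V}(q,m)=L_m^q(1)$, and $L_m^q$ was defined as the lift of $V\circ L_m$ beginning at $U(q)$, so its endpoint is precisely the image of $U(q)$ under the fiber transport $\alpha_m$. The main obstacle is purely bookkeeping — keeping the shuffled indices and the fiber basepoints straight — and no idea beyond the corresponding computation for simple shuffles is required.
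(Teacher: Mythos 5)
Your proposal is correct and follows essentially the paper's route: the paper disposes of this corollary by noting the computations are identical to those in Corollary \ref{standard_case}, which is precisely what you carry out — specializing $\mu$ and $\sigma$, splitting at $j=q$, and applying Lemmas \ref{computing G} and \ref{diagram_lemma}, with the endpoint $G_{U,V}(q,j-q)=L_{j-q}^{q}(1)$ accounted for by the fiber transport $\alpha_{j-q}$ exactly as the paper does in the simple-shuffle case.
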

	
		Finally we prove the main result.  
		\begin{thm}\label{the_hard_one}
		 	On the $E^2$ page of the Serre spectral sequence, the comultiplication $\nabla^2$ induced by the composition $\nabla^0$ agrees with the comultiplication of Lemma \ref{new comult} on $E^2_{*,*}\cong H_*(B;H_*(F))$.  
		\end{thm}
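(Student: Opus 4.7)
My plan is to compute $\nabla^0(U\squarebin V)$ explicitly from Corollaries \ref{standard_case} and \ref{exceptional_case}, reindex the resulting sum so that it matches the chain-level Alexander--Whitney formula for $\triangle_B\twisty\triangle_F$, and then pass to the $E^2$ page, using simple connectedness of $B$ to absorb the transport isomorphisms $\alpha_r$.  Proposition \ref{not simple gives zero} already guarantees that only simple shuffles contribute to \eqref{the_sum} after applying $\phi\otimes\phi$, so the task reduces to organizing these contributions.

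The combinatorial core is to reparametrize the simple shuffles as follows.  For a non-exceptional simple shuffle, the canonical parameters $(a,b,c)$ of Remarks \ref{temp remark 1} and \ref{temp remark 2} satisfy $\sigma(b)=b-a$ and $c=q+(b-a)$ by the non-degeneracy conditions on $\sigma\circ[a,\dots,b]$ and $\sigma\circ[c,\dots,p+q]$, so the shuffle is determined by the pair $(s,r):=(a,b-a)$.  I expect to verify that these pairs range exactly over $1\leq r\leq p$ and $0\leq s\leq q-1$, the boundary conditions being precisely what excludes the exceptional $\sigma=[0,\dots,0,1,\dots,p]$.  The exceptional shuffle itself contributes a non-zero term at every splitting index $j\in\{0,\dots,p+q\}$ of \eqref{the_sum}: indices $j<q$ give pairs $(r,s)=(0,j)$, the index $j=q$ gives $(0,q)$, and indices $j>q$ give $(r,q)$ for $1\leq r\leq p$ via $r=j-q$.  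Taken together, every pair $(r,s)\in[0,p]\times[0,q]$ will appear exactly once in the sum, and a direct count $(p+1)(q+1)=pq+(q)+(p+1)$ confirms this.

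Reading off Corollaries \ref{standard_case} and \ref{exceptional_case} in the $(r,s)$ indexing, each contribution takes the uniform shape
\[
(U\circ[0,\dots,s]\squarebin V\circ[0,\dots,r])\otimes(\alpha_r U\circ[s,\dots,q]\squarebin V\circ[r,\dots,p]),
\]
so $\nabla^0(U\squarebin V)$ is obtained by summing this expression over $(r,s)$.  Via the identification $E^2_{p,q}\cong H_p(B;H_q(F))$ of Proposition \ref{mccleary prop}, and using that simple connectedness of $B$ promotes each $\alpha_r$ to the canonical isomorphism $H_*(F_{V(0)})\cong H_*(F_{V(r)})$, this formula becomes precisely the chain-level Alexander--Whitney expansion of $(\triangle_B\twisty\triangle_F)([U]\squarebin[V])$, which by Proposition \ref{new comult} is the comultiplication under comparison.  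The main obstacle I expect to face is the combinatorial bookkeeping of the previous paragraph: verifying that the non-exceptional simple shuffles and the single exceptional shuffle together enumerate $[0,p]\times[0,q]$ without overlap or omission, especially along the boundary $\{r=0\}\cup\{s=q\}$ where the exceptional shuffle is solely responsible.  Once that is settled, the remaining passage from the chain-level formula to the $E^2$ comultiplication is formal, hinging only on the simple-connectedness hypothesis to discard the $\alpha_r$.
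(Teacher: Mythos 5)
Your proposal is correct and takes essentially the same route as the paper: restrict to simple shuffles via Proposition \ref{not simple gives zero}, read the contributions off Corollaries \ref{standard_case} and \ref{exceptional_case}, identify the total sum with the Alexander--Whitney representative of $\triangle_B\twisty\triangle_F$, and discard the transport isomorphisms $\alpha_r$ on the $E^2$ page using Lemma \ref{local system}, Proposition \ref{mccleary prop}, and simple connectedness of $B$. The only difference is organizational---you enumerate the contributing terms bijectively by $(r,s)\in[0,p]\times[0,q]$, with the exceptional shuffle covering the boundary $\{r=0\}\cup\{s=q\}$, whereas the paper fixes $a$, sums over $b$, and lets the exceptional shuffle supply the correction terms $R'(a)$---a cosmetic variation of the same bookkeeping (and, like the paper, you still owe the routine check that the Koszul signs from $\twisty$ cancel against the shuffle signs outside characteristic $2$).
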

	 	\begin{proof}
	 		Fix a fiber $F$ of $\pi\colon E\to B$ and consider the function:
	 		\[
	 			C_p(B;C_q(F))\to C_q(F)\otimes C_p(B)
	 		\]
	 		given by $U\otimes V\mapsto \alpha_U(U)\otimes V$ where $\alpha_U$ is one of the isomorphisms from the discussion preceding Corollary \ref{standard_case}.  Lemma \ref{local system} implies that this map is the identity on homology.  It follows from Proposition \ref{mccleary prop} that when computing on the $E^2$ page of the spectral sequence we can ignore the $\alpha_x$'s from the previous two corollaries.  Instead, we identify all the chains with their images in the single chain group $C_{*}(B)\otimes C_{*}(F)$.  We use $\triangle_B$ and $\triangle_F$ to denote the comultiplications in $C_{*}(B)$ and $C_{*}(F)$ from Lemma \ref{usual comult}.
	 		\par
	 		We need notation for summing over all simple $(q,p)$ shuffles.  First, we note that simple $(q,p)$ shuffles are in some sense overdetermined.  Upon fixing values $a$ and $b$ one can determine that $c=b-a+q$.  It follows that to sum over all simple $(q,p)$ shuffles we need only sum over all possible values of $a$ and $b$.  As noted previously, we must have $a\leq q$.  For $b$, we have $c=b-a+q$, so we also have $b=c+a-q$.  Since $c\leq p+q$, we have $b\leq p+a$.  When $a\neq q$ one can check that that all values $a+1\leq b\leq a+p$ do indeed determine a simple $(q,p)$ shuffle.  The exceptional case $a=q$ corresponds to the exceptional shuffle $\sigma=[0,\dots,0,1,\dots,p]$; it is clear this is the only option when $a=q$. Since $a$ and $b$ determine $\mu$ and $\sigma$ we will write $X_{j}^{a,b}=X_j^{\mu,\sigma}$ and similarly for the $Y's$.
	 		\par
	 		  Now fix homology classes $[U]\otimes [V]\in H_p(B;H_q(F))$.  Using  Proposition \ref{not simple gives zero} and Corollary \ref{standard_case}, we have $\nabla^2([U]\otimes [V])$ is represented by
	 		\[
	 			R+\sum\limits_{a=0}^{q-1}\sum\limits_{b=a+1}^{a+p}((U\circ[0,\dots,a])\squarebin (V\circ[0,\dots,b-a]))\otimes ((U\circ[a,\dots,q])\squarebin (V\circ[b-a,\dots,p]))
	 		\]
	 		where $R$ is the contribution from the case $a=q$.  For fixed $a$ we have
	 		\begin{gather*}
	 			\sum\limits_{b=a+1}^{a+p}((U\circ[0,\dots,a])\squarebin (V\circ[0,\dots,b-a]))\otimes ((U\circ[a,\dots,q])\squarebin (V\circ[b-a,\dots,p])) \\
	 			 = [((U\circ[0,\dots,a])\wedge (U\circ[a,\dots,q])) \twisty \triangle_B(V)]-R'(a)
	 		\end{gather*}
	 		where 
	 		\begin{align*}
	 			R'(a) & = ((U\circ[0,\dots,a])\wedge (U\circ[a,\dots,q])) \twisty ((V\circ[0])\wedge V)\\
	 			& = (U\circ[0,\dots,a]\squarebin V\circ[0])\otimes (U\circ[a,\dots,q]\squarebin V)
	 		\end{align*}
	 		The $R'(a)$ terms are exactly the terms in $R$ that result from the cases $j< q$ in Corollary \ref{exceptional_case}.  That is, we can compute $R-\sum\limits_{a=0}^{q-1}R'(a)$ as:
	 		\[
	 			 \sum\limits_{j=q}^{q+p} (U\squarebin (V\circ[0,\dots,j-q]))\otimes ((U\circ[0])\squarebin (V\circ[j-q,\dots,p])) = (U\wedge (U\circ[0]))\twisty\triangle_B(V)
	 		\]
	 		Adding it all up, we have $\nabla^2([U]\otimes [V])$ is represented by
	 		\begin{gather*}
	 			(U\wedge (U\circ[0]))\twisty\triangle_B(V)+\left(\sum\limits_{a=0}^{q-1}U\circ[0,\dots,a]\wedge U\circ[a,\dots,q]\right)\twisty\triangle_B(V) \\
	 			= \left(\sum\limits_{a=0}^q ((U\circ[0,\dots,a])\wedge (U\circ[a,\dots,q]))\right)\twisty \triangle_B(V) \\
	 			= \triangle_F(U)\twisty\triangle_B(V)
	 		\end{gather*}
	 		But this is exactly the element of $C_{*}(B;C_*(F))\otimes C_*(B;C_*(F))$ that represents the comultiplication of Lemma \ref{new comult} so we are done. 
	 	\end{proof}
 	
 	\section{Example Computations}\label{section: examples}
 		After putting in the work to put a comultiplication in the Serre spectral sequence we now show how it can be used to make some calculations.  The 	comultiplication plays a role in resolving differentials that is completely dual to that of the multiplication in the cohomological Serre spectral sequence.  We make no effort to be efficient in our writing, preferring instead to be more didactic in our examples.
 		\par
 		We begin by computing the homology of $U(2)$, the second unitary group.  In addition to calculating $H_*(U(2))$ as a vector space, we will also identify the $k$-coalgebra structure.  While this calculation can be done other ways, we find it to be a helpful example for several reasons.  First, it gives us an easy first case in using the co-Leibniz rule, which is less well-known that the usual Leibniz rule.  Second, it allows us to highlight certain subtleties that arise when trying to deduce the coalgebra structure on the homology of total spaces of fibrations. 
		
		\begin{example}
			We will make use of the fibration:
			\[
				U(1)\to U(2)\to S^3
			\]
			Since there is a homeomorphism $U(1)\cong S^1$, we know the $k$-homology coalgebras of both the fiber and the base: we have $H_0(S^1) = H_0(S^3) = k$, $H_1(S^1)=k\{\widetilde{x}_1\}$, $H_3(S^3) = k\{\widetilde{x}_3\}$ and both $\widetilde{x}_i$ are primitive.  The $E^2$ page of the associated homological Serre spectral sequence is:
			
			\begin{sseqdata}[ name = U,
				homological Serre grading,
				yscale =.75,
				xscale=1.25,
				y axis gap = 20pt,
				classes = {draw = none}]
				
				\class["1\squarebin 1"](0,0)
				\class["{\widetilde{x}_3\squarebin 1}"](3,0)
				\class["{1\squarebin \widetilde{x}_1}"](0,1)
				\class["{\widetilde{x}_3\squarebin \widetilde{x}_1}"](3,1)

			\end{sseqdata}
			\begin{center}
				\printpage[ name = U, page = 2]
			\end{center} 
			
			For dimension reasons the spectral sequence collapses here, so $E^{\infty}=E^2$ and we can read off the homology groups of $U(2)$: $H_n(U(2))=k$ when $n=0,1,3,4$ and $0$ else.  We will call the generators of the non-trivial homology groups $1,x_1,x_3,$ and $x_{3,1}$ respectively.
			
			We would like to determine the coalgebra structure on $H_*(U(2))$.  For dimension reasons it is clear that $x_1$ and $x_3$ are both primitive, so it remains to compute $\triangle(x_{3,1})  = \triangle_{U(2)}(x_{3,1})$.  For dimension reasons, we must have:
			\[
				\triangle(x_{3,1}) = x_{3,1}\otimes1 + \epsilon_1(x_1\otimes x_3)+\epsilon_2(x_3\otimes x_1) + 1\otimes x_{3,1} 
			\]
			where the $\epsilon_i\in k$.  To decide which, we will use the fact that our spectral sequence converges to $H_*(U(2))$ as a coalgebra in the sense of Definition \ref{convergence as coalgebras}.  That is, we have an isomorphism of bigraded coalgebras $\Gr(H_*(U(2)))\cong E^{\infty}$, where the filtration on the homology $H_*(U(2))$ is induced by the Serre filtration.  Since we know the $E^{\infty}$ page exactly, we can recover the filtration on homology using the isomorphism \ref{filtration quotients}.  We have:
			\begin{align*}
				F^{-1} & =0\\
				F^2 & = F^1 = F^0 =\mathbb{F}_2\{1,x_1\}\\
				F^3 & = \mathbb{F}_2\{1,x_1,x_3,x_{3,1}\} =  H_*(U(2)) 
			\end{align*}
			Since we know the filtration, we can compute the induced comultiplication in $\Gr(H_*(U(2)))$.  We compute
			\begin{equation}\label{comult in gr}
				\Gr(\triangle)([x_{3,1}]) = [x_{3,1}]\otimes[1] + \epsilon_1([x_1]\otimes [x_3])+\epsilon_2([x_3]\otimes [x_1]) + [1]\otimes [x_{3,1}] 
			\end{equation}
			where the square brackets denote elements in the associated graded.  Since the associated graded and $E^{\infty}$ page are isomorphic as coalgebras we can compute the $\epsilon_i$ in equation \ref{comult in gr} using the comultiplication $\nabla^{\infty}$ on $E^{\infty}$.  Since our isomorphism takes $[x_{3,1}]\in \Gr(H_*(U(2)))$ to $\widetilde{x}_3\squarebin \widetilde{x}_1\in E^{\infty}$, we compute $\nabla^{\infty}(\widetilde{x}_3\squarebin \widetilde{x}_1) = \triangle_{S^3}(\widetilde{x_3})\twisty \triangle_{S^1}(\widetilde{x_1})$ to be  
			\[
			 	(\widetilde{x_3}\squarebin\widetilde{x_1})\otimes (1\squarebin 1) - (1\squarebin \widetilde{x_1})\otimes (\widetilde{x_3}\squarebin 1) + (\widetilde{x_3}\squarebin 1)\otimes (1\squarebin \widetilde{x_1}) + (1\squarebin 1)\otimes (\widetilde{x_3}\squarebin\widetilde{x_1})
			\]
			and so $\epsilon_1=-1$ and $\epsilon_2=1$.
		\end{example}
		
		In this example we are lucky there is an isomorphism of \emph{coalgebras} $H_*(U(2))\cong \Gr(H_*(U(2)))$.  In general this is too much to hope for because there are non-isomorphic filtered graded coalgebras $C$ and $D$ whose associated gradeds are isomorphic as bigraded coalgbras.  Nevertheless, we will be able to perform similar computations for the higher dimensional unitary groups. This requires a brief diversion to discuss cofree coalgebras. 
		
		Let $C=C(V)$ be a coaugmented graded coalgebra with coaugmentation $\eta\colon k\to C$.  We will write $\overline{C}$ for the graded vector space $\coker \eta$.  
		\begin{defn}
			Let $V$ be a finite dimensional, non-negatively graded vector space and $C$ a graded, cocommutative coaugmented coalgebra.  We say $C$ is the graded cocommutative cofree coaugmented algebra cogenerated by $V$ if:
			\begin{enumerate}
				\item There is a vector space map $p\colon \overline{C}\to V$.
				\item Given any graded cocommutative coaugmented coalgebra $D$, and graded linear map $f\colon\overline{D}\to V$ there is a unique coalgebra map $g\colon D\to C$ so that  $f$ is equal to the composition 
				\[
					\overline{D}\xrightarrow{\overline{g}}\overline{C}\xrightarrow{p} V
				\] 
			\end{enumerate}
			We will also refer to such coalgebras as simply cofreely cogenerated by $V$.
		\end{defn}
	
		\begin{remark}
			Our definition for a cofree coalgebra could be expressed more categorically as follows.  Let $\textrm{CoAlg}_k$ be the category of graded cocommutative coaugmented $k$-coalgebras and let $\textrm{Vec}_k$ be the category of graded $k$ vector spaces.  The construction $C\mapsto \overline{C}$ is a functor $U\colon\textrm{CoAlg}_k\to \textrm{Vec}_k$ and it is not hard to check that a coalgebra $C(V)$ is cofreely cogenerated by a graded vector space $V$ if and only if there is a natural bijection of sets:
			\[
				\textrm{CoAlg}_k(D,C(V))\cong \textrm{Vec}_k(\overline{D},V)
			\]
			
			That is, constructing a $C(V)$ for all $V$ is same as building a right adjoint to the functor $U$.  A construction of such a functor, as well as a description of all the examples that follow, can be found in \cite[Section 3]{BGHSZ18}.  For a more general discussion of cofree coalgebras see \cite{Sweed}.
		\end{remark}
	
		\begin{example}
			Let $V$ be the graded vector space that is $0$ is every dimension.  Then the cofree coalgebra generated by $V$ is simply a copy of $k$ in degree $0$ with comultiplication $\triangle\colon k\to k\otimes k =k$ given by the identity map.  
		\end{example}
		\begin{example}\label{cofree on one generator}
			Suppose $\textrm{char}( k)\neq 2$ and let $V(n)$ be a copy of $k$ in dimension $n$ odd.  Then the coalgebra cofreely generated by $V(n)$ has a copy of $k$ in dimensions $0$ and $n$, with generator $1$ and $x_n$.  The comultiplication is specified by $\triangle(1)=1\otimes 1$ and $x_n$ is primitive.  The map $p\colon \overline{C}\to V$ sends $x_n$ to $1\in V(n)_n$.  As a graded vector space, we note that $C(V(n))$ is isomorphic to the exterior algebra $\Lambda_k[x_n]$.
		\end{example}
		\begin{example}\label{cofree on odd generators}
			Building on the last example, suppose $\textrm{char}( k)\neq 2$ and let $V$ have copies of $k$ in dimensions $1,3,\dots, 2n-1$ for some $n$.  Clearly $V$ is isomorphic to the product $V(1)\times V(3)\times\dots V(2n-1)$.  Since the cofree coalgebra functor is a right adjoint it preserves products and so we have
			\[
				C(V)\cong \Lambda_k[x_1]\otimes\Lambda_k[x_3]\otimes \dots\otimes\Lambda_k[x_{2n-1}] \cong \Lambda_k[x_1,\dots, x_{2n-1}] 
			\]
			Here we have used the fact that the categorical product of graded cocommutative coalgebras is the tensor product \cite[Proposition 20.3.4]{mayponto}.
		\end{example}
		
		We now turn to the computation of homology of the unitary groups $U(n)$ for $n>2$.  We will work over a field $k$ with $\textrm{char}(k)\neq 2$.  The idea is to work inductively up the family of fibrations
		\begin{equation}\label{Unitary Fibrations}
			U(n-1)\to U(n)\to S^{2n-1}
		\end{equation}
		
		First, we need a lemma.
		\begin{lem}\label{primitive_element_lemma}
			Let $(E,d,\triangle)$ be a spectral sequences of coalgebras, and let $x\in E^r$ be primitive, i.e. $\triangle^r(x) = x\otimes 1+1\otimes x$.  Then $d^r(x)$ is also primitive.
		\end{lem}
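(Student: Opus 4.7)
\emph{Proof plan.} The argument is a single direct application of the co-Leibniz rule, combined with the observation that $d^r(1) = 0$. Compute
\[
\triangle^r(d^r(x)) \;=\; (d^r \otimes 1 \pm 1 \otimes d^r)(\triangle^r(x)) \;=\; (d^r \otimes 1 \pm 1 \otimes d^r)(x \otimes 1 + 1 \otimes x),
\]
and expand the right side as four terms, yielding
\[
d^r(x) \otimes 1 \;+\; d^r(1) \otimes x \;\pm\; (-1)^{|x|} x \otimes d^r(1) \;+\; 1 \otimes d^r(x),
\]
where the sign in front of the final term collapses to $+1$ because moving $d^r$ past the degree-zero element $1$ contributes no Koszul sign.

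The whole argument therefore reduces to checking $d^r(1) = 0$. The element $1$ appearing in the definition of primitivity is the image of the coaugmentation and sits in bidegree $(0,0)$; for the first quadrant spectral sequences of interest, $d^r$ carries $E^r_{0,0}$ into $E^r_{-r,r-1} = 0$, so $d^r(1) = 0$ automatically. With the two middle terms killed, one concludes $\triangle^r(d^r(x)) = d^r(x) \otimes 1 + 1 \otimes d^r(x)$, which is exactly primitivity of $d^r(x)$.

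There is no serious obstacle here: once the co-Leibniz rule has been built into the definition of a spectral sequence of coalgebras, the lemma is essentially bookkeeping with Koszul signs. The only conceptual point is that primitivity is defined with respect to a distinguished grouplike element, and one must verify that this element is a permanent cycle; both observations are immediate in the first quadrant setting, so the proof does not require any additional input beyond the co-Leibniz rule itself.
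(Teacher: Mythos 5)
Your proof is correct and is essentially the paper's argument: a one-line application of the co-Leibniz rule to $\triangle^r(x)=x\otimes 1+1\otimes x$. You merely spell out the steps the paper leaves implicit, namely that $d^r(1)=0$ (clear in the first quadrant setting where the lemma is used) and that no Koszul sign arises when $d^r$ passes the degree-zero element $1$.
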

		\begin{proof}
			Using the co-Leibniz rule we get
			\begin{align*}
			\triangle^r(d^r(x))  = (d^r\otimes 1+ 1\otimes d^r)(\triangle^r(x)) = d^r(x)\otimes 1+1\times d^r(x) 
			\end{align*} 
		\end{proof}
	
		\begin{proposition}
			Let $k$ be a field with $\textrm{char}(k)\neq 2$ and let $V_n$ be the graded vector space that is a copy of $k$ in dimensions $1,3,\dots, 2n-1$.  Then $H_*(U(n);k)$ is the coalgebra cofreely generated by $V_n$ as described in Example \ref{cofree on odd generators}.
		\end{proposition}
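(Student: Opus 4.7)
The plan is to induct on $n$, using the Serre fibration $U(n-1)\to U(n)\to S^{2n-1}$ of display \eqref{Unitary Fibrations}. The base cases $n=1,2$ are immediate, the latter being the worked example above. For the inductive step, since $S^{2n-1}$ is simply connected for $n\geq 2$, Theorem~\ref{the_hard_one} identifies the $E^2$ page of the associated Serre spectral sequence with $H_*(S^{2n-1};k)\otimes H_*(U(n-1);k)$ carrying the comultiplication $\triangle_B\twisty\triangle_F$. Under the inductive hypothesis this is $\Lambda_k[\widetilde{x}_1,\widetilde{x}_3,\ldots,\widetilde{x}_{2n-1}]$, concentrated in columns $p=0$ and $p=2n-1$, with every generator primitive.

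The heart of the proof is showing that the spectral sequence collapses at $E^2$. For degree reasons the only possibly nonzero differential is $d^{2n-1}$, and it vanishes automatically on column $p=0$, so it suffices to show $d^{2n-1}(\widetilde{x}_{2n-1}\squarebin y)=0$ for every $y\in H_*(U(n-1);k)$. I argue by induction on $|y|$. When $y=1$, the class $\widetilde{x}_{2n-1}\squarebin 1$ is primitive on $E^{2n-1}=E^2$, so Lemma~\ref{primitive_element_lemma} makes $d^{2n-1}(\widetilde{x}_{2n-1}\squarebin 1)$ a primitive in $H_{2n-2}(U(n-1);k)$; by the inductive hypothesis the primitives of $\Lambda_k[x_1,\ldots,x_{2n-3}]$ are spanned by the generators, hence lie in odd degrees at most $2n-3$, and since $2n-2$ is even the differential vanishes. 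For the inductive step, expand $\triangle^{2n-1}(\widetilde{x}_{2n-1}\squarebin y)=\triangle_B(\widetilde{x}_{2n-1})\twisty\triangle_F(y)$ and apply the co-Leibniz rule. Each resulting summand on the right vanishes either because $d^{2n-1}$ kills the $p=0$ column or by the inductive hypothesis at lower values of $|y'|$; only the two terms $d^{2n-1}(\widetilde{x}_{2n-1}\squarebin y)\otimes 1$ and $1\otimes d^{2n-1}(\widetilde{x}_{2n-1}\squarebin y)$ survive. Thus $d^{2n-1}(\widetilde{x}_{2n-1}\squarebin y)$ is primitive in $H_{|y|+2n-2}(U(n-1);k)$, a degree at least $2n-1>2n-3$ that contains no nonzero primitives, and the differential vanishes.

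Having established $E^\infty\cong E^2$, Corollary~\ref{has-spec} gives a bigraded coalgebra isomorphism $\Gr H_*(U(n);k)\cong \Lambda_k[\widetilde{x}_1,\ldots,\widetilde{x}_{2n-1}]$, so in particular $\dim H_m(U(n);k)=\dim C(V_n)_m$ for every $m$. To upgrade to a coalgebra isomorphism $H_*(U(n);k)\cong C(V_n)$, use the inclusion $\iota_*\colon H_*(U(n-1);k)\to H_*(U(n);k)$ to transport the primitive generators $x_1,\ldots,x_{2n-3}$ (still primitive since $\iota_*$ is a coalgebra map), choose any lift $x_{2n-1}\in H_{2n-1}(U(n);k)$ of $\widetilde{x}_{2n-1}\squarebin 1\in E^\infty_{2n-1,0}$, and apply the universal property of $C(V_n)$ to a linear projection $\overline{H_*(U(n);k)}\to V_n$ onto the spans of these classes to produce a coalgebra map $g\colon H_*(U(n);k)\to C(V_n)$. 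I expect the main obstacle to be this final step: the lift $x_{2n-1}$ is determined only modulo lower filtration and need not itself be primitive, so verifying that $g$ preserves the relevant filtrations with $\Gr(g)$ recovering the bigraded isomorphism above requires the same sort of careful bookkeeping illustrated by the computation of the $\epsilon_i$ in the preceding $U(2)$ example. Once this is in place, the dimension match forces $g$ to be an isomorphism.
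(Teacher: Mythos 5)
Your proposal is correct and follows essentially the same route as the paper: induction on $n$ through the fibration $U(n-1)\to U(n)\to S^{2n-1}$, collapse at $E^2$ via Lemma \ref{primitive_element_lemma} and a co-Leibniz/degree argument (inducting on $|y|$ and using that the primitives of $\Lambda_k[x_1,\dots,x_{2n-3}]$ sit in odd degrees at most $2n-3$), and then the universal property of the cofree coalgebra together with a filtration/associated-graded comparison to identify $H_*(U(n);k)\cong C(V_n)$ as coalgebras. The bookkeeping you flag at the end (the lift of $\widetilde{x}_{2n-1}\squarebin 1$ not being primitive, and checking the coalgebra map is filtered with $\Gr(g)$ the identity on $E^\infty$) is exactly the step the paper also treats tersely, resolved there by noting that a filtered coalgebra map inducing an isomorphism on associated bigradeds is an isomorphism.
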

		\begin{proof}
			We work by induction on $n$, the base case $n=1$ follows from Example \ref{cofree on one generator} and the fact that $U(1)\cong S^1$.  So suppose we have proven the result for $U(n-1)$ and consider the homological Serre spectral sequence associated to the fibration \eqref{Unitary Fibrations}.
			
			The $E^2$ page, as a vector space, is given by 
			\[
				H_*(S^{2n-1})\otimes H_*(U(n-1)) \cong \Lambda_k[x_{2n-1}]\otimes C(V_{n-1})\cong C(V_n)
			\]
			To make the identification of $H_*(U(n);k)$ as a vector space it remains to show that the spectral sequence collapses at the $E^2$ page.  That is, we must show that all differentials in the spectral sequence are always zero. Because the homology of the base is trivial except in dimensions $0$ and $2n-1$ we have $E^2=E^3=\dots=E^{2n-1}$ and the only differential that could be non-trivial is $d=d^{2n-1}$. The $E^2=E^7$ page of the spectral sequence for $n=4$ is displayed below.
			\begin{sseqdata}[ name = more U,
				homological Serre grading,
				yscale =.75 , 
				classes = {draw = none} ]
				\class["1\squarebin 1"](0,0)
				\class["\mathbf{1\squarebin x_{1}}"](0,1)
				\class["\mathbf{1\squarebin x_{3}}"](0,3)
				\class["1\squarebin x_{3}x_1"](0,4)
				\class["\mathbf{1\squarebin x_{5}}"](0,5)
				\class["1\squarebin x_{5}x_1"](0,6)
				\class["1\squarebin x_{5}x_3"](0,8)
				\class["1\squarebin x_{5}x_3x_1"](0,9)
				\class["\mathbf{x_{7}\squarebin 1}"](7,0)
				\class["x_{7}\squarebin x_{1}"](7,1)
				\class["x_{7}\squarebin x_{3}"](7,3)
				\class["x_{7}\squarebin x_{3}x_1"](7,4)
				\class["x_{7}\squarebin x_{5}"](7,5)
				\class["x_{7}\squarebin x_{5}x_1"](7,6)
				\class["x_{7}\squarebin x_{5}x_3"](7,8)
				\class["x_{7}\squarebin x_{5}x_3x_1"](7,9)
				\class(-1,0)
				\class(8,0)
			\end{sseqdata}
			\begin{center}
				\begin{figure}[h]
					\printpage[ name = more U, page = 2 ]
					\caption{The $E^2=E^7$ page of Serre spectral sequence for the fibration  $U(4)\to U(5)\to S^7$.  The primitive elements are displayed in bold.}
				\end{figure}
			\end{center}
			
			By induction, the only non zero homogeneous primitive elements in $H_*(U(n-1))$ are $x_{1},\dots,x_{2n-3}$.  Lemma \ref{primitive_element_lemma} tells us that $d(\widetilde{x}_{2n-1}\squarebin 1)$ is a primitive element in $E^{2n-1}_{0,2n-2}$.  But this impossible unless this element is $0$ because $E^{2n-1}_{0,*}\cong H_*(U(n-1))$ which only has primitive elements in odd dimensions.  Thus $d(x_{2n-1}\squarebin 1)=0$.
			
			Every other generator of the $2n-1$ column of the spectral sequence is of the form $x_{2n-1}\squarebin y$ for $y\in H_*(U(n-1))$.  Fix a particular $y$, and suppose that we know that $d(x_{2n-1}\squarebin z)=0$ for all $|z|<|y|$.  A routine computation with the co-Leibniz rule tells us that $d(x_{2n-1}\squarebin y)$ is a primitive element.  But $|d(x_{2n-1}\squarebin y)|=2n+|y|-1>2n-3$ and the highest degree of a non-zero primitive element in $H_*(U(n-1))$ is $2n-3$ so  we conclude that $d(x_{2n-1}\squarebin y)=0$. 
			
			It remains to check that the proposed comultiplication is correct.  That is, we need to show that the apparent vector space isomorphism $H=H_*(U(n);k)\cong C(V_n)$ is an isomorphism of graded coalgebras. First, we filter $C(V_n)$ by giving each element the degree equal to the $p$ coordinate of the corresponding element in $E^{\infty}$.  That is, the filtration degree of $x_{i_1}x_{i_2}\dots x_{i_r}$ is $2n-1$ if $i_k=2n-1$ for some $k$ and $0$ else. The degree of a sum of elements is the largest degree appearing in the sum.  We note that the associated graded of $C(V_n)$ with this filtration is exactly $E^{\infty}$.
			
			Now, define a $k$-linear map $\overline{H}\to V_n$ by sending $x_{i}$ to the generator of $V_n$ in degree $i$ and all other terms to $0$.  By the universal property of cofreeness, this extends to a map of filtered coalgebras $H\to C(V_n)$.  On associated bigraded coalgebras this map is the identity on the $E^{\infty}$ page and so the result follows from the fact that a morphism of filtered coalgebras is an an isomorphism if it is an isomorphism on associated bigradeds.
			\end{proof}

	\bibliography{math-comult}	
	\bibliographystyle{alpha}
	
\end{document}